\newcommand{\Bmu}{\mbox{$\raisebox{-0.59ex}
  {$l$}\hspace{-0.18em}\mu\hspace{-0.88em}\raisebox{-0.98ex}{\scalebox{2}
  {$\color{white}.$}}\hspace{-0.416em}\raisebox{+0.88ex}
  {$\color{white}.$}\hspace{0.46em}$}{}}
\numberwithin{equation}{section}
\newfont{\cyr}{wncyr10 scaled 1100}
\newfont{\cyrr}{wncyr9 scaled 1000}
\theoremstyle{plain}
\newtheorem{theorem}{Theorem}[section]
\newtheorem*{theoremA}{Theorem A}
\newtheorem*{theoremB}{Theorem B}
\newtheorem{proposition}[theorem]{Proposition}
\newtheorem{lemma}[theorem]{Lemma}
\newtheorem{corollary}[theorem]{Corollary}
\newtheorem{conjecture}[theorem]{Conjecture}
\theoremstyle{definition}
\newtheorem{definition}[theorem]{Definition}
\newtheorem{assumption}[theorem]{Assumption}
\theoremstyle{remark}
\newtheorem{remark}[theorem]{Remark}
\newcommand{\Q}{\mathds Q}
\newcommand{\N}{\mathds N}
\newcommand{\Z}{\mathds Z}
\newcommand{\R}{\mathds R}
\newcommand{\C}{\mathds C}
\newcommand{\F}{\mathds F}
\newcommand{\T}{\mathds T}
\newcommand{\A}{\mathds A}
\newcommand{\defeq}{\vcentcolon=}
\newcommand{\eqdef}{=\vcentcolon}
\DeclareMathOperator{\End}{End}
\DeclareMathOperator{\Frob}{Frob}
\DeclareMathOperator{\Hom}{Hom}
\DeclareMathOperator{\Gal}{Gal}
\DeclareMathOperator{\GL}{GL}
\DeclareMathOperator{\SL}{SL}
\DeclareMathOperator{\Sel}{Sel}
\DeclareMathOperator{\Sym}{Sym}
\DeclareMathOperator{\CH}{CH}
\DeclareMathOperator{\AJ}{AJ}
\DeclareMathOperator{\Ind}{Ind}
\DeclareMathOperator{\h}{\boldsymbol h}
\DeclareMathOperator{\cyc}{cyc}
\DeclareMathOperator{\ddiv}{div}
\DeclareMathOperator{\im}{im}
\newcommand{\res}{\mathrm{res}}
\newcommand{\cores}{\mathrm{cores}}
\newcommand{\tr}{\mathrm{tr}}
\newcommand{\ord}{\mathrm{ord}}
\newcommand{\new}{\mathrm{new}}
\newcommand{\an}{\mathrm{an}}
\newcommand{\alg}{\mathrm{alg}}
\newcommand{\Sha}{\mbox{\cyr{X}}}
\definecolor{Indigo}{rgb}{0.2,0.1,0.7}
\definecolor{Violet}{rgb}{0.5,0.1,0.7}
\definecolor{White}{rgb}{1,1,1}
\definecolor{Green}{rgb}{0.1,0.9,0.2}
\newcommand{\longmono}{\mbox{\;$\lhook\joinrel\longrightarrow$\;}}
\newcommand{\longepi}{\mbox{\;$\relbar\joinrel\twoheadrightarrow$\;}}
\newcommand{\E}{\mathcal E}
\newfont{\gotip}{eufb10 at 12pt}
\newcommand{\cO}{{\mathcal O}}
\newcommand{\cG}{{\mathcal G}}
\newcommand{\m}{\mathfrak{m}}
\newcommand{\p}{\mathfrak{p}}
\newcommand{\fP}{\mathfrak{P}}
\newcommand{\hf}{\boldsymbol f^{(p)}}
\DeclareMathOperator{\GS}{GS}
\DeclareMathOperator{\Ta}{Ta}
\DeclareMathOperator{\PSL}{PSL}
\DeclareMathOperator{\sspp}{sp}
\DeclareMathOperator{\bsspp}{\overline{sp}}
\DeclareMathOperator{\rank}{rank}
\DeclareMathOperator{\corank}{corank}
\DeclareMathOperator{\f}{\boldsymbol f}
\begin{document}

\title[On Shafarevich--Tate groups and analytic ranks in Hida families]{On Shafarevich--Tate groups and analytic ranks\\in families of modular forms, I. Hida families}
\author{Stefano Vigni}


\thanks{}

\begin{abstract}
This is the first article in a two-part project whose aim is to study algebraic and analytic ranks in $p$-adic families of modular forms. Let $f$ be a newform of weight $2$, square-free level and trivial character, let $A_f$ be the abelian variety attached to $f$ and for every good ordinary prime $p$ for $f$ let $\hf$ be the $p$-adic Hida family through $f$. We prove that, for all but finitely many primes $p$ as above, if $A_f$ is an elliptic curve such that the Mordell--Weil group $A_f(\Q)$ has rank $1$ and the $p$-primary part of the Shafarevich--Tate group of $A_f$ over $\Q$ is finite, then all specializations of $\hf$ of weight congruent to $2$ modulo $2(p-1)$ and trivial character have finite $p$-primary Shafarevich--Tate group and $1$-dimensional image of the relevant $p$-adic \'etale Abel--Jacobi map. An analogous result is obtained also in the rank $0$ case. As a second contribution, with no restriction on the dimension of $A_f$ but assuming the non-degeneracy of certain height pairings \emph{\`a la} Gillet--Soul\'e between Heegner cycles, we show that, for all but finitely many $p$, if $f$ has analytic rank $1$, then all specializations of $\hf$ of weight congruent to $2$ modulo $2(p-1)$ and trivial character have analytic rank $1$. This result provides some evidence in rank $1$ and weight larger than $2$ for a conjecture of Greenberg predicting that the analytic ranks of even weight modular forms in a Hida family should be as small as allowed by the functional equation, with at most finitely many exceptions.
\end{abstract}


\address{Dipartimento di Matematica, Universit\`a di Genova, Via Dodecaneso 35, 16146 Genova, Italy}
\email{stefano.vigni@unige.it}

\subjclass[2020]{11F11 (primary), 14C25 (secondary)}

\keywords{Modular forms, Hida families, big Heegner points, Heegner cycles, $L$-functions, Shafarevich--Tate groups.}

\maketitle


\section{Introduction} \label{Intro}

The theme of the present article is the study of certain arithmetic invariants of modular forms (algebraic ranks, analytic ranks, Shafarevich--Tate groups) when the modular forms they are attached to vary in a Hida (\emph{i.e.}, slope $0$) family. This is the first paper in a two-part project: the article \cite{PPV} deals with Coleman (\emph{i.e.}, finite slope) families.

In the past few decades, Hida theory has played a prominent role in algebraic number theory and arithmetic geometry. On the one hand, deforming an eigenform $f$ (often, but not necessarily, of weight $2$) to a Hida family turns out to be a powerful technique to get information on $f$. Just as sample achievements for which Hida theory was a key ingredient, here we mention the work of Greenberg--Stevens (\cite{GS}) on the exceptional zero conjecture of Mazur--Tate--Teitelbaum (\cite{MTT}) for weight $2$ newforms and the proof by Bertolini--Darmon of the rationality of Stark--Heegner points over genus fields of real quadratic fields (\cite{BD-Stark-Heegner}). On the other hand, Hida theory provides a means to package modular forms (and their Galois representations) in a systematic, continuous manner, so it is natural to investigate, as in our paper, how arithmetic objects or invariants attached to modular forms vary in a Hida family, with the ultimate goal of deducing results on all (or, at least, a significant collection of) forms in the family from information on a single form in the family. This is the point of view taken, for example, by Emerton--Pollack--Weston in their study of the variation of (cyclotomic) Iwasawa invariants in Hida families (\cite{EPW}; \emph{cf.} also \cite{CKL} for analogous results in an anticyclotomic setting) and by Howard in his article on the variation of Heegner points in Hida families (\cite{Howard-Inv}).

Let $\f$ be a $p$-adic Hida family of tame level $N$, where $p$ is a prime number such that $p\nmid2N$. By definition, $\f\in\mathcal R[\![q]\!]$ where $\mathcal R$ is a complete local noetherian domain that is finite and flat over a suitable $p$-adic Iwasawa algebra. A crucial property of $\f$ is that it admits specializations $f_\wp$ at \emph{arithmetic} prime ideals $\wp$ of $\mathcal R$: every $f_\wp$ is a cuspidal eigenform on $\Gamma_1(Np^r)$ for some $r\geq1$ (see \S \ref{hida-subsec} for details). For the sake of simplicity, in this introduction we ignore the phenomenon of ``$p$-stabilization'' (see \S \ref{stabilization-subsec}). Thus, whenever we speak of an arithmetic invariant associated with $f_\wp$ we tacitly understand that this notion refers (at least when the weight of $f_\wp$ is larger than $2$) to the newform of level $N$ whose $p$-stabilization is equal to $f_\wp$, rather than to $f_\wp$ itself.

Let $\bar\Z$ denote the ring of integers in a fixed algebraic closure $\bar\Q$ of $\Q$ and let $\fP$ be a prime ideal of $\bar\Z$ above $p$. Let $\Q_{f_\wp}$ be the number field generated over $\Q$ by the Fourier coefficients of $f_\wp$, let $\cO_{f_\wp}$ be the ring of integers of $\Q_{f_\wp}$ and write $\Q_{f_\wp,\fP}$ (respectively, $\cO_{f_\wp,\fP}$) for the completion of $\Q_{f_\wp}$ (respectively, $\cO_{f_\wp}$) at the prime of $\cO_{f_\wp}$ under $\fP$. Moreover, denote by $V_{f_\wp,\fP}$ the $\fP$-adic Galois representation attached by Deligne to $f_\wp$, which is two-dimensional over $\Q_{f_\wp,\fP}$, let $V_{f_\wp,\fP}^\dagger$ be its self-dual twist and let $T^\dagger_{f_\wp,\fP}$ be a suitably chosen $\cO_{f_\wp,\fP}$-lattice inside $V_{f_\wp,\fP}^\dagger$. Let $\wp$ be an arithmetic prime of $\mathcal R$ of even weight $k_\wp>2$ and trivial character. Let $\Lambda_{f_\wp,\fP}(\Q)\subset H^1\bigl(\Q,T^\dagger_{f_\wp,\fP}\bigr)$ be the image of the $\fP$-adic \'etale Abel--Jacobi map attached to $f_\wp$ and denote by $\Sha_\fP(f_\wp/\Q)$ the $\fP$-primary Shafarevich--Tate group of $f_\wp$ over $\Q$ (see \S \ref{cycles-subsec} and \S \ref{sha-subsec}). The $\cO_{f_\wp,\fP}$-module $\Lambda_{f_\wp,\fP}(\Q)$ is finitely generated and we define the \emph{algebraic $\fP$-rank} $r_{\alg,\fP}(f_\wp)$ of $f_\wp$ to be the rank of $\Lambda_{f_\wp,\fP}(\Q)$ over $\cO_{f_\wp,\fP}$. In analogy with a well-known conjecture for abelian varieties over number fields, $\Sha_\fP(f_\wp/\Q)$ is expected to be finite for every $\wp$ as above (in fact, for any newform). 

One of the goals of this paper is to study the algebraic invariants $r_{\alg,\fP}(f_\wp)$ and $\Sha_\fP(f_\wp/\Q)$ as $\wp$ runs over (a suitable subset of) the arithmetic primes of $\mathcal R$ of even weight and trivial character. On the one hand, as remarked below, analytic arguments suggest that $r_{\alg,\fP}(f_\wp)$ should be identically equal either to $0$ or to $1$, except for at most finitely many $\wp$. On the other hand, as far as Shafarevich--Tate groups are concerned, a result that one would ideally like to prove is the following: $\Sha_\fP(f_\wp/\Q)$ is finite for all $\wp$ as soon as $\Sha_\fP(f_\wp/\Q)$ is finite for one $\wp$. Unfortunately, a result of this form seems to be out of reach of current techniques. In general, while it is difficult to approach the variations of $r_{\alg,\fP}(f_\wp)$ and $\Sha_\fP(f_\wp/\Q)$ separately, a joint study of these two invariants can be much more effective and rewarding.

To describe our results, we introduce some notation. Let $f$ be a newform of weight $2$, square-free level $N$ and trivial character and let $p$ be a \emph{good ordinary} prime number for $f$, that is, a prime such that $p\nmid N$ and $f$ is $p$-ordinary in the sense that $p$ does not divide the $p$-th Fourier coefficient of $f$. Here we are implicitly assuming that $f$ is also $\fP$-ordinary (see \S \ref{stabilization-subsec}), which is an important but technical point: in due course, we will carefully explain how to choose a suitable prime $\fP$ of $\bar\Z$ above $p$ (\emph{i.e.}, an embedding $\bar\Q\hookrightarrow\bar\Q_p$). Let $\hf\in\mathcal R[\![q]\!]$ be the $p$-adic Hida family of tame level $N$ passing through $f$, whose specializations will be denoted, as above, by $f_\wp$. Finally, write $A_f$ for the abelian variety over $\Q$ attached to $f$ via Shimura's construction and let $\Sha_{p^\infty}(A_f/\Q)$ be the $p$-primary part of the Shafarevich--Tate group of $A_f$ over $\Q$.  

Our first main result (Theorems \ref{sha-thm} and \ref{sha-thm2}) can be stated as follows.

\begin{theoremA} 
Suppose that $A_f$ is an elliptic curve and the rank of $A_f(\Q)$ is $0$ (respectively, $1$). For all but finitely many primes $p$ that are good ordinary for $f$, if $\Sha_{p^\infty}(A_f/\Q)$ is finite, then all specializations $f_\wp$ of $\hf$ of weight congruent to $2$ modulo $2(p-1)$ and trivial character satisfy $r_{\alg,\fP}(f_\wp)=0$ (respectively, $r_{\alg,\fP}(f_\wp)=1$) and $\#\Sha_\fP(f_\wp/\Q)<\infty$. 
\end{theoremA}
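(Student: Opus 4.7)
The plan is to use a $\Lambda$-adic Heegner-type Euler system to transport the arithmetic data available at weight $2$ to every arithmetic specialization of $\hf$ in the prescribed congruence class. Fix an auxiliary imaginary quadratic field $K$ of discriminant prime to $Np$ satisfying the Heegner hypothesis for $N$, chosen so that $L(A_f^K, 1) \neq 0$ (possible by Bump--Friedberg--Hoffstein / Murty--Murty); this forces $\rank A_f(K) = \rank A_f(\Q)$ and $\#\Sha_{p^\infty}(A_f/K) < \infty$ in both the rank $0$ and rank $1$ cases. I would then invoke Howard's construction of a big Heegner class $\mathfrak{Z} \in H^1(K, T^\dagger_{\hf})$ in the self-dual twist of the big Galois representation attached to $\hf$. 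By compatibility with $p$-adic \'etale Abel--Jacobi maps (Castella, Brooks, and successors), the specialization $\mathfrak{Z}_\wp$ at an arithmetic $\wp$ of even weight and trivial character lies in the image of the Abel--Jacobi map over $K$, and at weight $2$ it recovers --- up to a $p$-adic unit coming from the $p$-stabilization Euler factor --- the Kummer image of a Heegner point $P_K \in A_f(K)$.

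In the rank $1$ case, Gross--Zagier together with Kolyvagin's theorem imply that $P_K$ is non-torsion under our hypotheses, so $\mathfrak{Z}$ is not $\mathcal R$-torsion. Since $\mathcal R$ is a finite flat extension of an Iwasawa algebra, $\mathfrak{Z}$ therefore vanishes only on a proper closed subscheme of $\Spec \mathcal R$, hence at only finitely many arithmetic primes of the prescribed class. For each non-vanishing $\wp$, projection to the $+1$-eigenspace of the complex conjugation of $K$ produces a nonzero class in $\Lambda_{f_\wp, \fP}(\Q)$, yielding $r_{\alg,p}(f_\wp) \geq 1$. The matching upper bound $r_{\alg,p}(f_\wp) \leq 1$ together with $\#\Sha_\fP(f_\wp/\Q) < \infty$ I would extract from Nekov\'a\v{r}'s Kolyvagin-system machinery for Chow groups of Kuga--Sato varieties, applied with Howard's derived classes as input: the non-vanishing of the anchor class bounds the dual Bloch--Kato Selmer group. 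For the rank $0$ case I would replace the Heegner Euler system by Kato's $\Lambda$-adic zeta element and its Hida-family interpolation (Ochiai): rank $0$ plus finite $\Sha_{p^\infty}(A_f/\Q)$ yields $\#\Sel_{p^\infty}(A_f/\Q) < \infty$, and a control theorem for the $\mathcal R$-adic Bloch--Kato Selmer group then propagates finiteness to all but finitely many $\wp$, forcing $r_{\alg,p}(f_\wp) = 0$ and $\#\Sha_\fP(f_\wp/\Q) < \infty$ simultaneously.

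The main technical obstacle is the simultaneous exclusion of the finitely many bad primes $p$. One must avoid at once primes where $\bar\rho_{f,\fP}$ fails to be residually absolutely irreducible or fails the big-image hypothesis, primes dividing the order of the torsion subgroup of $A_f(\Q)$ or the Tamagawa factors of $A_f$, primes where Howard's integrality and $\fP$-ordinarity hypotheses break, and primes where the Euler-system divisibilities of Nekov\'a\v{r} or Kato require additional local conditions. Equally delicate is the passage from raw non-triviality of $\mathfrak{Z}_\wp$ in $H^1\bigl(\Q, T^\dagger_{f_\wp, \fP}\bigr)$ to non-triviality of its image in $\Lambda_{f_\wp, \fP}(\Q)$: this uses the comparison between $p$-adic \'etale Abel--Jacobi maps for Kuga--Sato varieties and Bloch--Kato Selmer classes, and it is here that the congruence condition $k_\wp \equiv 2 \pmod{2(p-1)}$ enters, ensuring crystallinity with the right Hodge--Tate weights so the interpolation is well-behaved across the Hida family.
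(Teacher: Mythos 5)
Your overall architecture for the rank-$1$ case (Waldspurger/Bump--Friedberg--Hoffstein to choose $K$, Howard's big Heegner class, Castella--Ota specialization, Nekov\'a\v{r}'s bound from Heegner cycles, eigenspace decomposition under $\Gal(K/\Q)$) lines up well with the paper's. But there is a genuine gap in the non-vanishing step. You argue that, since $\mathfrak{Z}$ is not $\mathcal R$-torsion and $\mathcal R$ is finite flat over an Iwasawa algebra, $\mathfrak{Z}_\wp \neq 0$ for all but finitely many arithmetic primes $\wp$ of the prescribed congruence class. That density argument only excludes finitely many $\wp$ for each fixed $p$, whereas Theorem~A asserts the conclusion for \emph{all} $\wp$ of the prescribed congruence class, once $p$ lies outside a finite set. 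The paper closes this gap differently: for all but finitely many $p$ the Heegner point $\alpha_K$ does not lie in $\p A_f(K)$ (Proposition~\ref{finite-number-prop}), so its Kummer image mod $\p$ is nonzero; combining the commutative diagram \eqref{commutative-eq} (which compares residual specialization maps via Castella--Ota's explicit unit) with this mod-$\p$ non-triviality forces the mod-$\p$ reduction of \emph{every} $y_{\wp,K}$ to be nonzero, and torsion-freeness of $H^1(K, T^\dagger_{f_\wp^\flat})$ (Corollary~\ref{no-torsion-coro}, which in turn needs the non-solvable-image results from Fischman) then upgrades this to non-torsionness of $y_{\wp,K}$ uniformly in $\wp$. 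This mod-$\p$ step is exactly where the finite set of excluded $p$ comes from, and it is what your proposal is missing.

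Your rank-$0$ strategy is also a genuinely different route from the paper's, and as described it inherits the same quantifier problem. You propose replacing Heegner classes by Kato's $\Lambda$-adic zeta element (Ochiai) and a control theorem; any control-theoretic argument of this shape again delivers ``all but finitely many $\wp$'' from the error terms, not ``all $\wp$''. The paper instead re-uses the Heegner machinery symmetrically: it chooses an imaginary quadratic $K'$ with $r_\an(f \otimes \chi_{K'}) = 1$ (Murty--Murty), so $\alpha_{K'}$ is non-torsion; Theorem~\ref{non-torsion-prop2} then shows $y_{\wp,K'}$ is non-torsion; the sign $\varepsilon(g) = +1$ forces $\tau(y_{\wp,K'}) = -y_{\wp,K'}$ via Nekov\'a\v{r}'s Proposition~6.2, so $y_{\wp,K'}$ lives in the $-$-eigenspace; and since $r_{\alg,p}(g/K') = 1$ with $y_{\wp,K'}$ a generator up to torsion, the $+$-eigenspace (which contains $\Lambda_{g,\fP}(\Q)$ by the comparison Propositions \ref{lambda-prop1} and \ref{lambda-prop2}) must be torsion, giving $r_{\alg,p}(g/\Q) = 0$. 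Note also that your opening choice ``$L(A_f^K,1) \neq 0$, possible in both cases'' is inconsistent with what you later need: in the rank-$0$ case one wants the quadratic twist to have analytic rank $1$, not rank $0$. Finally, the paper explicitly remarks (Remark~\ref{mk-rem}) that an alternative along the lines you propose (via Mazur--Kitagawa $p$-adic $L$-functions) is available but is deliberately avoided so as to keep the argument entirely on the algebraic cycle side; so your rank-$0$ route, even if repaired, trades away one of the stated goals of the paper.
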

 
Now we turn our attention to an invariant of an analytic nature. Let $\f$ be a Hida family. For any specialization $f_\wp$ of $\f$ of even weight $k_\wp$ and trivial character, let $\varepsilon(f_\wp)\in\{\pm1\}$ be the root number of $f_\wp$, \emph{i.e.}, the sign in the functional equation for the $L$-function $L(f_\wp,s)$ of $f_\wp$. The root number controls the parity of the \emph{analytic rank} $r_\an(f_\wp)$ of $f_\wp$, \emph{i.e.}, the order of vanishing of $L(f_\wp,s)$ at $s=k_\wp/2$; namely, $\varepsilon(f_\wp)=(-1)^{r_\an(f_\wp)}$. It is known that $\varepsilon(f_\wp)$ is constant, except for finitely many $\wp$ that have necessarily weight $2$ and have been described by Mazur--Tate--Teitelbaum, when $\wp$ varies over the arithmetic primes of $\mathcal R$ as above. A prime outside this finite exceptional set will be called \emph{generic}; we set $\varepsilon(\f)\defeq\varepsilon(f_\wp)$ for any generic prime $\wp$ of $\mathcal R$ and call $\varepsilon(\f)$ the \emph{root number} of $\f$. It is convenient to define the \emph{minimal admissible generic rank} of $\f$ as
\[ r_{\min}(\f)\defeq\frac{1-\varepsilon(\f)}{2}. \] 
Equivalently, $r_{\min}(\f)=0$ if $\varepsilon(\f)=1$ and $r_{\min}(\f)=1$ if $\varepsilon(\f)=-1$.

A conjecture of Greenberg (\cite{Greenberg-CRM}) predicts that the analytic ranks of even weight modular forms in $\f$ should be equal to $r_{\min}(\f)$, with at most finitely many exceptions. Relatively little is known about this conjecture: as pointed out in \S \ref{greenberg-subsec}, the results that are currently available deal (under some technical assumptions on $\f$) either with $\varepsilon(\f)=-1$ and weight $2$ forms or with arbitrary (even) weight forms but $\varepsilon(\f)=1$. Observe that combining the conjecture of Greenberg with the conjectures of Birch--Swinerton-Dyer (in weight $2$) and of Beilinson--Bloch--Kato (in higher weight) on $L$-functions of modular forms justifies the expectation (Conjecture \ref{main-conj}) that $r_{\alg,\fP}(f_\wp)$ should equal $r_{\min}(\f)$ for all but finitely many $\wp$. It turns out (Corollaries \ref{sha-coro} and \ref{sha-coro2}) that Theorem A is consistent with (and gives partial evidence for) this conjectural statement.
 
Let the Hida family $\hf$ be as in Theorem A. As a second contribution (Theorem \ref{main-thm}), with no restriction on the dimension of $A_f$ but assuming the non-degeneracy of certain height pairings that have been introduced (following Gillet--Soul\'e) by S.-W. Zhang in \cite{Zhang-heights} to prove a counterpart for higher (even) weight modular forms of the Gross--Zagier formula, we offer

\begin{theoremB} 
Suppose that $f$ has analytic rank $1$ and that the height pairing in Zhang's formula is non-degenerate. For all but finitely many primes $p$ that are good ordinary for $f$, all specializations of $\hf$ of weight congruent to $2$ modulo $2(p-1)$ and trivial character have analytic rank $1$.
\end{theoremB}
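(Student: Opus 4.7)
Since $r_\an(f)=1$, the functional equation forces $\varepsilon(f)=-1$, and as the root number is constant at generic primes of a Hida family we have $\varepsilon(\hf)=-1$, so every generic specialization $f_\wp$ satisfies $r_\an(f_\wp)\geq 1$ with the correct parity. The plan is therefore to exhibit, for all but finitely many arithmetic primes $\wp$ of the required type, a non-trivial Heegner cycle attached to $f_\wp$ and to invoke the higher-weight Gross--Zagier formula of S.-W. Zhang to convert this non-triviality into non-vanishing of $L'(f_\wp,k_\wp/2)$, yielding the reverse inequality $r_\an(f_\wp)\leq 1$.

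The starting point is to fix an imaginary quadratic field $K$ of discriminant prime to $Np$, satisfying the Heegner hypothesis relative to $N$ and chosen so that $L(f^{\chi_K},1)\neq 0$. Such a $K$ exists by the non-vanishing results of Bump--Friedberg--Hoffstein and Murty--Murty. With this choice the sign of $L(f/K,s)$ is $-1$ and $r_\an(f/K)=1$, so the classical Gross--Zagier theorem shows that the Heegner point $y_K\in A_f(K)\otimes\Q$ is non-torsion. One further needs $L(f_\wp^{\chi_K},k_\wp/2)\neq 0$ for all but finitely many $\wp$ in the family; this can be arranged by combining the above choice of $K$ with non-vanishing results for the twisted $p$-adic $L$-function associated with $\hf$ and $\chi_K$, in the spirit of Rohrlich's theorems.

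Next, enter the family. Let $\mathfrak{Z}_\infty\in H^1(K,\mathbb{T}^\dagger)$ be Howard's big Heegner class attached to $\hf$. Its specialization $\mathfrak{Z}_\wp\in H^1\bigl(K,T^\dagger_{f_\wp,\fP}\bigr)$ at an arithmetic prime $\wp$ of even weight $k_\wp$ and trivial character coincides, when $k_\wp=2$, with the Kummer image of $y_K$ up to a $p$-adic unit and, when $k_\wp>2$, by work of Castella, Longo--Vigni and others, with the $\fP$-adic \'etale Abel--Jacobi image of a Heegner cycle $y_{f_\wp}$ in the appropriate Chow group of a Kuga--Sato variety over $K$. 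Since $\mathfrak{Z}_\infty$ is non-trivial in the big Iwasawa cohomology (as witnessed at the weight-$2$ specialization), a standard control argument in the style introduced by Howard propagates the non-vanishing to $\mathfrak{Z}_\wp$ for all but finitely many arithmetic primes $\wp$ of the stated type; thus $\AJ(y_{f_\wp})\neq 0$, and in particular $y_{f_\wp}$ is non-trivial in the $f_\wp$-isotypic quotient of the Chow group.

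The conclusion now follows from Zhang's higher-weight Gross--Zagier formula in the form
\[ \bigl\langle y_{f_\wp},y_{f_\wp}\bigr\rangle_{\GS}\;=\;c_\wp\cdot L'(f_\wp/K,k_\wp/2) \]
with $c_\wp\neq 0$ an explicit constant. The non-degeneracy hypothesis on the Gillet--Soul\'e height pairing then forces $L'(f_\wp/K,k_\wp/2)\neq 0$, whence $r_\an(f_\wp/K)=1$; factoring $L(f_\wp/K,s)=L(f_\wp,s)L(f_\wp^{\chi_K},s)$ and using $L(f_\wp^{\chi_K},k_\wp/2)\neq 0$ yields $r_\an(f_\wp)=1$. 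The main obstacle I expect is the uniform control of the specialization $\mathfrak{Z}_\wp$ at all but finitely many arithmetic primes of weight congruent to $2\pmod{2(p-1)}$, which requires a careful combination of Howard's non-triviality theorem with a specialization statement over $\mathcal R$ valid at arithmetic primes of arbitrary even weight; a subsidiary but genuinely delicate point is the uniform non-vanishing of $L(f_\wp^{\chi_K},k_\wp/2)$ along the Hida family, needed to descend from $K$ to $\Q$.
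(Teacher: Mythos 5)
Your broad outline — fix an auxiliary imaginary quadratic field $K$ via Bump--Friedberg--Hoffstein, pass to Howard's big Heegner class, specialize via Castella/Ota, invoke Zhang's formula with the non-degeneracy hypothesis, then descend to $\Q$ — is the same as the paper's. However, you flag two points as delicate, and both, as you have set them up, are genuine gaps in the argument as proposed.

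First, the descent from $K$ to $\Q$. You posit the need for a uniform non-vanishing statement $L(f_\wp\otimes\chi_K,k_\wp/2)\neq 0$ across the family and suggest proving it "in the spirit of Rohrlich's theorems" or via a twisted $p$-adic $L$-function. Rohrlich's non-vanishing results vary the twisting character for a \emph{fixed} form; a uniform non-vanishing result as the form $f_\wp$ varies along a Hida family is exactly an instance of Greenberg's conjecture (for the twisted family), so invoking it here is circular. But in fact no such input is needed. Since $\varepsilon(\hf)=-1$ and higher-weight arithmetic primes are never exceptional (Lemma \ref{root-number-lemma}), one has $\varepsilon(f_\wp^\flat)=-1$, hence $r_\an(f_\wp^\flat)\geq 1$. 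Once Zhang's formula and non-degeneracy force $r_\an(f_\wp^\flat/K)=1$, the factorization $r_\an(f_\wp^\flat/K)=r_\an(f_\wp^\flat)+r_\an(f_\wp^\flat\otimes\chi_K)$ together with $r_\an(f_\wp^\flat\otimes\chi_K)\geq 0$ immediately forces $r_\an(f_\wp^\flat)=1$ and, as a by-product, $r_\an(f_\wp^\flat\otimes\chi_K)=0$. The parity constraint does all the work.

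Second, the control argument. A Howard-style Iwasawa-theoretic control argument, which uses non-vanishing of the big class $\mathfrak{Z}_\infty$ to deduce non-vanishing of $\mathfrak{Z}_\wp$ for \emph{all but finitely many} arithmetic primes $\wp$ (with the exceptional set depending on $p$), yields a statement of the form ``for each good ordinary $p$, cofinitely many specializations have analytic rank $1$.'' Theorem B asserts something stronger and differently quantified: for all but finitely many $p$, \emph{every} specialization of weight $\equiv 2\pmod{2(p-1)}$ and trivial character has analytic rank $1$. To get this, the paper does not run a control argument at all; it instead chooses $p$ in the set $\mathscr S_f$ so that $\bar\alpha_K\neq 0$ in $A_f(K)/\p A_f(K)$, and then uses the commutative diagram built from the Castella--Ota specialization maps (together with the identification $\bar\T=\bar\T^\dagger$ and the independence of the residual representation from $\wp$) to show that $y_{\wp,K}$ has non-zero image modulo the maximal ideal for \emph{every} such $\wp$. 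This is what gives the uniformity in $\wp$, at the cost of excluding finitely many $p$. Your approach, as written, proves a related but weaker statement.
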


Since the assumption on $r_\an(f)$ implies that $\varepsilon(\hf)=-1$, this result (albeit conditional on the non-degeneracy of Zhang's heights) provides supporting evidence (the first of this kind, to the best of our knowledge) for Greenberg's conjecture in weight larger than $2$ when $r_{\min}(\hf)=1$. It is worth remarking that the non-degeneracy that we need to impose is, in fact, predicted by the arithmetic analogues of the standard conjectures that have been proposed by Gillet and Soul\'e (\cite{GS-2}).

For the reader's convenience, we sketch our strategy for proving Theorems A and B. Under the assumption that $r_\an(f)\in\{0,1\}$, we introduce three sets $\Xi_f$, $\Theta_f$, $\Omega_f$ of prime numbers (\S \ref{heegner-subsec}, \S \ref{sha-subsec}, \S \ref{choice-subsec2}). Each of these sets, which are defined (using the Gross--Zagier formula and analytic results of Waldspurger, Bump--Friedberg--Hoffstein and Murty--Murty) in terms of, among other conditions, the non-triviality modulo $p$ of the imaginary quadratic Heegner point on $A_f$ appearing in the Gross--Zagier formula, consists of all but finitely many primes that are good ordinary for $f$. In particular, if $p$ belongs to any of the aforementioned sets and $\wp$ is an arithmetic prime of weight $k_\wp\equiv2\pmod{2(p-1)}$ and trivial character, then, thanks to results of Fischman on the image of $\Lambda$-adic Galois representations (\cite{Fischman}), the residual $\fP$-adic representation attached to $f_\wp$ has non-solvable image (\S \ref{lambda-subsec}; notice that the assumption that $N$ is square-free makes it easier to apply Fischman's results). This property, combined with work of Castella (\cite{CasHeeg}) and of Ota (\cite{Ota-JNT}) on the specializations of Howard's big Heegner points, leads us to our key technical result: for any $\wp$ as above, the imaginary quadratic Heegner cycle $y_\wp$ (to be denoted by $y_{\wp,K}$ in the main body of the paper) that was originally defined by Nekov\'a\v{r} is non-torsion over $\cO_{f_\wp,\fP}$ in the relevant \'etale Abel--Jacobi image (\S \ref{big-subsec}). Once the non-degeneracy of Zhang's heights is assumed, Theorem B for all $p\in\Xi_f$ is then a consequence (\S \ref{Q-subsec}) of Zhang's formula of Gross--Zagier type for modular forms (\S \ref{zhang-subsec}). 

Finally, suppose that $A_f$ is an elliptic curve. In order to prove Theorem A, we note that the assumption that $r\defeq\rank_\Z A_f(\Q)\in\{0,1\}$ and $\#\Sha_{p^\infty}(A_f/\Q)<\infty$ amounts, thanks to converses to the Kolyvagin--Gross--Zagier theorem due to Skinner--Urban (if $r=0$, \cite{SU}) and to W. Zhang (if $r=1$, \cite{zhang-selmer}), to the condition $r_\an(f)=r$. Since $y_\wp$ is not torsion, Theorem A for all $p\in\Omega_f$ if $r=0$ or for all $p\in\Theta_f$ if $r=1$ follows (\S \ref{sha-subsec2} and \S \ref{sha-subsec}) from Nekov\'a\v{r}'s results on the arithmetic of Chow groups of Kuga--Sato varieties (\cite{Nek}) combined with a comparison of \'etale Abel--Jacobi images over $\Q$ and over certain imaginary quadratic fields (\S \ref{AJ-subsec}), which may be interesting in its own right.

\subsection{Notation and conventions} \label{notation-subsec}

We denote by $\bar\Q$ an algebraic closure of $\Q$ and write $\bar\Z$ for the ring of integers in $\bar\Q$ (\emph{i.e.}, the integral closure of $\Z$ in $\bar\Q$). For every prime number $\ell$ we fix an algebraic closure $\bar\Q_\ell$ of $\Q_\ell$. Moreover, for every prime $\ell$ and every number field $F$ we also fix field embeddings
\[ \iota_\ell:\bar\Q\longmono\bar\Q_\ell,\quad\iota_F:F\longmono\bar\Q. \]
Later on, we will specify how to choose $\bar\Q_p$ for $p$ in any of the sets of primes $\Xi_f$, $\Theta _f$, $\Omega_f$ alluded to before, and then $\bar\Q$ will be the algebraic closure of $\Q$ in $\bar\Q_p$.

The map $\iota_\ell$ determines a prime ideal $\mathfrak L$ of $\bar\Z$ above $\ell$ that, in turn, induces a prime $\mathfrak L_F\defeq\iota_F^{-1}\bigl(\mathfrak L\cap\iota_F(F)\bigr)$ of $F$ above $\ell$. In order to simplify our notation, when there is no risk of confusion we will often use alternative symbols to denote the ideal $\mathfrak L_F$ and related objects. For example, we write $F_\mathfrak L$ in place of $F_{\mathfrak L_F}$ for the completion of $F$ at the prime $\mathfrak L_F$.

For any number field $K$ we denote by $G_K\defeq\Gal(\bar K/K)$ the absolute Galois group of $K$, where $\bar K$ is a fixed algebraic closure of $K$. For any continuous $G_K$-module $M$ we write $H^i(K,M)$ for the $i$-th continuous cohomology group of $G_K$ with coefficients in $M$ in the sense of Tate (\cite[\S 2]{Tate}). Finally, if $K/F$ is an extension of number fields, then  
\[ \res_{K/F}:H^i(F,M)\longrightarrow H^i(K,M),\quad\cores_{K/F}:H^i(K,M)\longrightarrow H^i(F,M) \] 
denote the restriction and corestriction maps in cohomology, respectively.

\subsection*{Acknowledgements} 

It is a pleasure to thank Matteo Longo and Rodolfo Venerucci for enlightening conversations on some of the topics of this paper. I would also like to express my gratitude to Maria Rosaria Pati for her very careful reading of this article and for pointing out several inaccuracies in a previous version of it. Finally, I wish to thank the anonymous referee for helpful remarks and suggestions.

\section{Galois representations and Hida families} \label{hida-sec}

In this section, we provide some background on Galois representations attached to Hecke eigenforms and on Hida families of modular forms; this will also give us an occasion to introduce notation that will be used throughout this paper. The reader who is conversant with these topics may wish to skim through this section and come back to it only if the need arises.

\subsection{Galois representations attached to modular forms} \label{cohomological-subsec}

Let $f\in S_k(\Gamma_0(M),\chi)$ be a normalized eigenform of weight $k\geq2$, level $M\geq3$ and character $\chi$, whose $q$-expansion will be denoted by
\[ f(q)=\sum_{n\geq1}a_n(f)q^n. \]
Let $\Q_f\defeq\Q\bigl(a_n(f)\mid n\geq1\bigr)$ be the Hecke field of $f$, \emph{i.e.}, the subfield of $\C$ that is generated over $\Q$ by the Fourier coefficients of $f$. It is well known that $\Q_f$ is a number field and that $a_n(f)$ is an algebraic integer for all $n\geq1$; let $\cO_{\Q_f}$ be the ring of integers of $\Q_f$. The Fourier coefficients of $f$ generate an order $\cO_f$ in $\cO_{\Q_f}$. Let $p$ be a prime number and fix a prime $\p$ of $\Q_f$ above $p$; by a slight abuse of notation, we write $\p$ also for the prime (actually, maximal) ideal of $\cO_f$ under $\p$. Deligne has attached to $f$ a $\p$-adic representation $V_{f,\p}$ of $G_\Q$ (\cite{Del-Bourbaki}), which is $2$-dimensional over the completion $\Q_{f,\p}$ of $\Q_f$ at $\p$, unramified outside $Mp$ and, by a result of Ribet (\cite[Theorem 2.3]{ribet}), irreducible.

Two ``normalizations'' of Deligne's representation are naturally available; in order to avoid any ambiguity, we specify the one we use in this paper. Denote by $\mathfrak H_k(\Gamma_1(M))$ the Hecke algebra over $\Z$ acting on $S_k(\Gamma_1(M))$. The eigenform $f$ comes equipped with a $\Q$-algebra homomorphism 
\[ \mathfrak H_k\bigl(\Gamma_1(M)\bigr)\otimes_\Z\Q\longrightarrow\C \]
whose image is $\Q_f$, hence $\Q_f$ is naturally endowed with an $\mathfrak H_k(\Gamma_1(M))$-algebra structure. Following Deligne's construction, and in light of Nekov\'a\v{r}'s definition of Heegner cycles (\cite{Nek}), we set
\begin{equation} \label{V-eq}
V_{f,\p}\defeq H^1_{\text{\'et}}\bigl(X_1(M)\otimes\bar\Q,j_*\Sym^{k-2}R^1\gamma_*\Q_p\bigr)\otimes_{\mathfrak H_k(\Gamma_1(M))\otimes_\Z\Q_p}\Q_{f,\p}, 
\end{equation} 
where $\gamma:\mathcal E\rightarrow Y_1(M)$ is the universal elliptic curve and $j:Y_1(M)\hookrightarrow X_1(M)$ is the open immersion that gives the canonical compactification of the modular curve $Y_1(M)$. The two-dimensional $\Q_{f,\p}$-vector space $V_{f,\p}$ is endowed with a natural action of $G_\Q$ and we write
\[ \rho_{f,\p}:G_\Q\longrightarrow\GL(V_{f,\p})\simeq\GL_2(\Q_{f,\p}) \] 
for the corresponding homomorphism. We call $V_{f,\p}$ the \emph{cohomological} realization of Deligne's representation: it is characterized by the fact that the characteristic polynomial of a \emph{geometric} Frobenius at a prime $\ell\nmid Mp$ is the Hecke polynomial
\begin{equation} \label{hecke-eq}
X^2-a_\ell(f)X+\chi(\ell)\ell^{k-1}. 
\end{equation}
For more details see, \emph{e.g.}, \cite[\S 2.3]{Ota-JNT}, \cite[Section 4]{saito-intro}, \cite[Theorem 1.2.4]{scholl}. By a suitably twisted Poincar\'e duality on (compactified) Kuga--Sato varieties, $V_{f,\p}$ is equipped with a $G_\Q$-equivariant, alternating, non-degenerate pairing
\[ V_{f,\p}\times V_{f,\p}\longrightarrow\Q_{f,\p}(1-k)\otimes_{\Q_{f,\p}}[\chi], \]
where $\Q_{f,\p}(1-k)$ denotes, as usual, a Tate twist of the trivial representation and $[\chi]$ is the one-dimensional representation induced by $\chi$ as in \cite[(1.3.4)]{NP}. This shows that if $V^*_{f,\p}\defeq\Hom_{\Q_{f,\p}}(V_{f,\p},\Q_{f,\p})$ is the dual (\emph{i.e.}, contragredient) representation of $V_{f,\p}$, then
\begin{equation} \label{dual-eq}
V^*_{f,\p}\simeq V_{f,\p}(k-1)\otimes_{\Q_{f,\p}}\bigl[\chi^{-1}\bigr]. 
\end{equation}
It follows that if $\chi$ is trivial (hence $k$ is even), which will eventually be the case in this paper, then $V_{f,\p}^\dagger\defeq V_{f,\p}(k/2)$ is self-dual in the sense that $V_{f,\p}^\dagger\simeq\bigl(V_{f,\p}^\dagger\bigr)^*(1)$.

\begin{remark} \label{irreducible-rem}
Irreducibility of a given representation is preserved by tensorization with one-dimensional representations (see, \emph{e.g.}, \cite[Exercise 2.2.14, (2)]{kowalski}), so Tate twists of $V_{f,\p}$ are irreducible. In particular, $V_{f,\p}^\dagger$ is irreducible.
\end{remark}

Suppose now that $k=2$ and $\chi$ is trivial, and let $A_f$ be the abelian variety over $\Q$ attached to $f$ by the Eichler--Shimura construction (\emph{cf.} \cite[\S 7.5]{shimura}). The ring $\cO_f$ embeds into the ring $\End_\Q(A_f)$ of endomorphisms of $A_f$ defined over $\Q$; in fact, $\Q_f=\End_\Q(A_f)\otimes_\Z\Q$ and $A_f$ is an abelian variety of $\GL_2$-type (see, \emph{e.g.}, \cite[Corollary 4.2]{ribet-twists}).

Denote by $\Ta_\p(A_f)$ the $\p$-adic Tate module of $A_f$ and let $V_\p(A_f)\defeq\Ta_\p(A_f)\otimes\Q$ be the associated $\Q_{f,\p}$-linear representation of $G_\Q$. There is an identification
\[ V_{f,\p}=H^1_{\text{\'et}}(A_f,\Q_{f,\p})\simeq{V_\p(A_f)}^*, \]
where the isomorphism above follows by combining \cite[Theorem 15.1, (a)]{Milne-AV} with the $G_\Q$-equivariant splitting $\Ta_p(A_f)=\oplus_{\pi\mid p}\Ta_\pi(A_f)$, with $\pi$ varying over the primes of $\Q_f$ above $p$. Taking the self-dual twist, we obtain
\begin{equation} \label{V-2-eq}
V_{f,\p}^\dagger=V_{f,\p}(1)\simeq V_\p(A_f)^*(1)\simeq V_\p(A_f), 
\end{equation}
where the rightmost isomorphism, which we fix once and for all, is a consequence of the Weil pairing.

Now let us go back to the general case of weight $k\geq2$. We call the dual $V_{f,\p}^*$ of $V_{f,\p}$ the \emph{homological} realization of Deligne's representation: the characteristic polynomial of an \emph{arithmetic} Frobenius at a prime $\ell\nmid Mp$ is the Hecke polynomial \eqref{hecke-eq}. 

\begin{remark} \label{irreducibility-rem}
Since $V_{f,\p}$ is finite dimensional, the irreducibility of $V_{f,\p}$ is equivalent to the irreducibility of $V_{f,\p}^*$ (\cite[Proposition 2.2.18, (4)]{kowalski}).
\end{remark}

Notice that the self-dual twist of $V_{f,\p}^*$ is ${(V_{f,\p}^*)}^\dagger\defeq V_{f,\p}^*(1-k/2)$. With this normalization, in weight $2$ we recover the Tate module:
\[ V^*_{f,\p}={H^1_{\text{\'et}}(A_f,\Q_{f,\p})}^*\simeq{V_\p(A_f)}. \] 

\begin{remark}
Using \eqref{dual-eq}, it is straightforward to check that if $\chi$ is trivial, then $V_{f,\p}^\dagger\simeq{(V_{f,\p}^*)}^\dagger$.
\end{remark}

\begin{remark}
If $k=2$, then ${(V_{f,\p}^*)}^\dagger=V_{f,\p}^*$, in accord with the fact that $V_\p(A_f)$ is self-dual.
\end{remark}

In this article we always use the cohomological realization of Deligne's representation and simply refer to $V_{f,\p}$ in \eqref{V-eq} as \emph{the} $\p$-adic Galois representation attached to $f$.

\subsection{Reduction of Galois representations} \label{reduction-subsec}

Assume that $f$ has trivial character $\chi$, so $k$ must be even. Denote by $\cO_{\Q_f,\p}$ the completion of $\cO_{\Q_f}$ at $\p$, \emph{i.e.}, the valuation ring of $\Q_{f,\p}$. Fix a $G_\Q$-stable $\cO_{\Q_f,\p}$-lattice $T_{f,\p}\subset V_{f,\p}$. As is explained, \emph{e.g.}, in \cite[Definition 2.1]{Ota-JNT}, if $p\nmid6M$ (a condition we shall impose in due course), then we take $T_{f,\p}$ to be the lattice in $V_{f,\p}$ generated by the image of the integral \'etale cohomology
\[ H^1_{\text{\'et}}\bigl(X_1(M)\otimes\bar\Q,j_*\Sym^{k-2}R^1\gamma_*\Z_p\bigr)\otimes_{\mathfrak H_k(\Gamma_1(M))\otimes_\Z\Z_p}\cO_{\Q_f,\p}. \]
Set
\[ T^\dagger_{f,\p}\defeq T_{f,\p}\otimes_{\cO_{\Q_f,\p}}\!\cO_{\Q_f,\p}(k/2)\subset V^\dagger_{f,\p} \]
for the corresponding Tate twist. When $k=2$ one chooses $T_{f,\p}$ so that $T^\dagger_{f,\p}$ corresponds to $\Ta_\p(A_f)$ under isomorphism \eqref{V-2-eq}. Let $\pi_{f,\p}$ be a uniformizer for $\cO_{\Q_f,\p}$ and define 
\[ \bar T_{f,\p}\defeq T_{f,\p}/\pi_{f,\p}T_{f,\p},\quad\bar T^\dagger_{f,\p}\defeq T^\dagger_{f,\p}\big/\pi_{f,\p} T^\dagger_{f,\p}. \]
These are two-dimensional representations of $G_\Q$ over the residue field $\F_\p\defeq\cO_{\Q_f,\p}/\pi_{f,\p}\cO_{\Q_f,\p}$ of $\Q_{f,\p}$ and we write
\[ \bar\rho_{f,\p}:G_\Q\longrightarrow\GL(\bar T_{f,\p}),\quad\bar\rho^\dagger_{f,\p}:G_\Q\longrightarrow\GL\bigl(\bar T^\dagger_{f,\p}\bigr) \]
for the corresponding homomorphisms. The reduced representations $\bar\rho_{f,\p}$ and $\bar\rho^\dagger_{f,\p}$ depend on the choice of an $\cO_{\Q_f,\p}$-lattice inside $V_{f,\p}$ (although the notation that we use does not reflect this dependence), but their semisimplifications
\[ \bar\rho^{\,ss}_{f,\p}:G_\Q\longrightarrow\GL(\bar T^{\,ss}_{f,\p}),\quad\bar\rho^{\dagger,ss}_{f,\p}:G_\Q\longrightarrow\GL\bigl(\bar T^{\dagger,ss}_{f,\p}\bigr) \]
do not. The representation $\bar\rho^{\,ss}_{f,\p}$ is the \emph{residual representation of $f$ at $\p$}. 

\begin{remark} \label{absolutely-rem}
Since $p\not=2$, the representation $\bar\rho_{f,\p}$ is irreducible if and only if it is absolutely irreducible (see, \emph{e.g.}, \cite[(1.5.3), (3)]{NP}).
\end{remark}

Let $\varepsilon_{\cyc}:G_\Q\rightarrow\Z_p^\times$ be the $p$-adic cyclotomic character and write
\[ \bar\varepsilon_{\cyc}:G_\Q\longrightarrow\F_p^\times \]
for the mod $p$ cyclotomic character, \emph{i.e.}, the reduction of $\varepsilon_{\cyc}$ modulo $p$. It follows that 
\begin{equation} \label{dagger-twist-eq}
\bar T^\dagger_{f,\p}=\bar T_{f,\p}(k/2), 
\end{equation}
where ``$(k/2)$'' means that the action of $G_\Q$ on $\bar T_{f,\p}$ is twisted by the $k/2$-th power of $\bar\varepsilon_{\cyc}$. In particular, $\bar T_{f,\p}$ is irreducible if and only if $\bar T^\dagger_{f,\p}$ is (see Remark \ref{irreducible-rem}).

\begin{remark}
As a consequence of \eqref{dagger-twist-eq}, there is an identification $\bar T^{\dagger,ss}_{f,\p}=\bar T^{\,ss}_{f,\p}(k/2)$.
\end{remark}

\begin{remark} \label{twist-rem}
If $k\equiv2\pmod{2(p-1)}$, then $\bar T^\dagger_{f,\p}=\bar T_{f,\p}(1)$.
\end{remark}

\subsection{Reduction of dual representations} \label{residual-dual-subsec}

As in \S \ref{reduction-subsec}, assume that the character of $f$ is trivial. The lattice $T_{f,\p}\subset V_{f,\p}$ induces a \emph{dual lattice} $T_{f,\p}^*\defeq\Hom_{\cO_{\Q_f,\p}}(T_{f,\p},\cO_{\Q_f,\p})$ inside $V^*_{f,\p}$. Clearly, $T_{f,\p}^*$ is stable under the action of $G_\Q$. Set
\[ \overline{T_{f,\p}^*}\defeq T_{f,\p}^*\big/\pi_{f,\p}T_{f,\p}^*, \]
which is a two-dimensional representation of $G_\Q$ over $\F_\p$. Since $T_{f,\p}$ is free (hence projective) over $\cO_{\Q_f,\p}$, there is a natural identification
\begin{equation} \label{reduction-dual-eq}
\overline{T_{f,\p}^*}=\Hom_{\F_\p}\bigl(\bar T_{f,\p},\F_\p\bigr)\eqdef\bigl(\bar T_{f,\p}\big)^*. 
\end{equation}
We shall simply write $\bar T_{f,\p}^*$ for the $\F_\p$-linear representation of $G_\Q$ in \eqref{reduction-dual-eq}: it can equivalently be interpreted either as the reduction of $T_{f,\p}^*$ or as the dual of $\bar T_{f,\p}$. We denote by
\[ \bar\rho_{f,\p}^*:G_\Q\longrightarrow\GL(\bar T^*_{f,\p}) \]
the corresponding homomorphism. 

\begin{remark}
As was already pointed out for $V_{f,\p}$ in Remark \ref{irreducibility-rem}, $\bar T_{f,\p}$ is irreducible if and only if $\bar T^*_{f,\p}$ is.
\end{remark}

\begin{remark} \label{dual-twist-rem}
It follows from \eqref{dual-eq} that if $k\equiv2\pmod{2(p-1)}$, then $\bar T_{f,\p}^*\simeq\bar T_{f,\p}(1)$.
\end{remark}

\subsection{$p$-stabilization of modular forms} \label{stabilization-subsec}

Let $g\in S_k(\Gamma_0(M),\chi)$ be a normalized eigenform of weight $k\geq2$ and character $\chi$, with $q$-expansion $g(q)=\sum_{n\geq1}a_n(g)q^n$ and Hecke field $\Q_g$. Let $\cO_{\Q_g}$ be the ring of integers of $\Q_g$, let $p$ be a prime number such that $p\nmid M$, consider the prime $\fP_g\defeq\fP\cap\cO_{\Q_g}$ of $\Q_g$ above $p$ that was introduced in \S \ref{notation-subsec} and denote by $\Q_{g,\fP}$ the completion of $\Q_g$ at $\fP_g$. Assume that $g$ is \emph{$\fP$-ordinary}, \emph{i.e.}, $a_p(g)$ is a unit of the local field $\Q_{g,\fP}$; equivalently, $a_p(g)\notin\fP_g$. It follows that the Hecke polynomial
\[ X^2-a_p(g)X+\chi(p)p^{k-1}=(X-\alpha)(X-\beta) \]
has exactly one root that is a $\fP_g$-adic unit, say $\alpha$. If $g$ is a newform, then the \emph{$p$-stabilization} of $g$ is defined to be 
\[ g^\sharp(z)\defeq g(z)-\beta g(pz). \]
Then $g^\sharp$ is a $\fP$-ordinary normalized eigenform of weight $k$ and level $Mp$ such that, with self-explaining notation, $a_\ell(g^\sharp)=a_\ell(g)$ for all primes $\ell\neq p$ and $U_p(g^\sharp)=\alpha g^\sharp$, where $U_p$ is the usual Hecke operator on modular forms.

\begin{lemma} \label{L-functions-lemma}
$\ord_{s=k/2}L(g,s)=\ord_{s=k/2}L(g^\sharp,s)$.
\end{lemma}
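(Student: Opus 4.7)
The plan is to identify the $L$-function of $g^\sharp$ as a simple nonzero modification of $L(g,s)$ at the center and deduce the equality of orders from this.

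First, I would write down the Euler products. Since $g$ is a newform of level $M$ with $p\nmid M$, the local Euler factor at $p$ of $L(g,s)$ is
\[
\bigl(1-a_p(g)p^{-s}+\chi(p)p^{k-1-2s}\bigr)^{-1}=\bigl((1-\alpha p^{-s})(1-\beta p^{-s})\bigr)^{-1},
\]
while the relations $a_\ell(g^\sharp)=a_\ell(g)$ for $\ell\neq p$ and $U_p(g^\sharp)=\alpha g^\sharp$ recorded in \S\ref{stabilization-subsec} show that the local factor of $L(g^\sharp,s)$ at $p$ is $(1-\alpha p^{-s})^{-1}$, and the local factors at $\ell\neq p$ coincide with those of $L(g,s)$. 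Comparing Euler products therefore yields the clean identity
\[
L(g^\sharp,s)=(1-\beta p^{-s})\,L(g,s).
\]

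Second, I would verify that the Euler factor $1-\beta p^{-s}$ does not vanish at $s=k/2$. By the Ramanujan--Petersson bound (Deligne's theorem) applied to the newform $g$, the roots $\alpha,\beta$ of the Hecke polynomial at $p$ satisfy $|\alpha|=|\beta|=p^{(k-1)/2}$. Hence $|\beta p^{-k/2}|=p^{-1/2}<1$, so in particular $1-\beta p^{-k/2}\neq 0$.

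Combining these two points, $L(g^\sharp,s)$ and $L(g,s)$ differ at $s=k/2$ by a holomorphic factor that is nonzero there, so their orders of vanishing at $s=k/2$ agree. There is no real obstacle here: the only substantive input is Deligne's archimedean bound on $\beta$, which is needed solely to rule out the accidental vanishing of the extra Euler factor at the critical point; everything else is a direct computation with Dirichlet series.
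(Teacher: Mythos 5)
Your proof is correct and follows the same route as the paper, which simply observes that $L(g^\sharp,s)$ equals $L(g,s)$ with the Euler factor $(1-\beta p^{-s})^{-1}$ removed. You go a useful step further by explicitly invoking Deligne's bound $|\beta|=p^{(k-1)/2}$ to verify that the removed factor is nonzero at $s=k/2$, a point the paper leaves implicit.
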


\begin{proof} The $L$-function of $g^\sharp$ is equal to the $L$-function of $g$ with the Euler factor ${(1-\beta p^{-s})}^{-1}$ removed, and the claim follows. \end{proof}

With a terminology that will be introduced in Definition \ref{analytic-rank-def}, Lemma \ref{L-functions-lemma} says that the analytic ranks of $g$ and of $g^\sharp$ are equal.

\begin{remark} \label{isom-rep-rem}
With notation as above, let $V_{g,\fP}$ be the $\fP_g$-adic representation of $G_\Q$ attached to $g$ (see \S \ref{cohomological-subsec}). Recall that $V_{g,\fP}$ is irreducible, unramified outside $Mp$ and with the property that if $\ell$ is a prime such that $\ell\nmid Mp$, then the trace of a Frobenius element at $\ell$ acting on $V_{g,\fP}$ is $a_\ell(g)$. Since semisimple Galois representations are determined (up to isomorphism) by the traces of Frobenius elements at all but finitely many (unramified) primes (see, \emph{e.g.}, \cite[Lemme 3.2]{DS}), we conclude that $V_{g,\fP}$ and $V_{g^\sharp,\fP}$ (or, rather, their base changes to $\bar\Q_p$) are equivalent. 
\end{remark}

\begin{remark}
A closely related (but not equivalent) notion is that of a \emph{$p$-ordinary} form $g$, by which we mean that $a_p(g)\notin p\cO_{\Q_g}$. Of course, since $p\cO_{\Q_g}\subset\fP_g$, if $g$ is $\fP$-ordinary, then $g$ is $p$-ordinary; however, $g$ being $p$-ordinary does not guarantee, in general, that there is a prime ideal $\mathcal P$ of $\cO_{\Q_g}$ such that $g$ is $\mathcal P$-ordinary. At the end of \S \ref{heegner-subsec} we shall explain how to deal, in our context, with this issue.
\end{remark}

Our next goal is to introduce, as in \cite[Definition 2.5]{GS}, the notion of $p$-stabilized newform. A $\fP$-ordinary, normalized eigenform $g\in S_k\bigl(\Gamma_0(Mp^r),\chi\bigr)$ is a \emph{$p$-stabilized newform} (of tame conductor $M$) if the following two conditions hold:
\begin{enumerate}
\item the conductor of $g$ is divisible by $M$;
\item the level of $g$ is divisible by $p$.
\end{enumerate}
It can be checked that a $p$-stabilized newform $g$ is either already a newform of level $Mp^r$ or is the $p$-stabilization $f^\sharp$ of a newform $f$ of level $M$ as defined above. As previously remarked, in the latter case the level of $g$ is $Mp$. Furthermore, as a consequence of the strong multiplicity one theorem (see, \emph{e.g.}, \cite[Theorem 3.22]{hida-modular}), a normalized newform $f$ as above is unique.

\begin{remark} \label{old-rem}
Let $g\in S_k(\Gamma_0(Mp^r))$, with $r\geq1$, be a normalized eigenform of weight $k\geq2$. As a consequence of \cite[Proposition 3.1]{hida-measure}, if $g$ is $\fP$-ordinary and $k>2$, then $g$ is old at $p$ (see also \cite[Lemma 2.1.5]{Howard-Inv}). It follows that if $g$ is a $p$-stabilized newform of weight $k>2$, then necessarily $g=f^\sharp$ for a newform $f$ of level $M$.
\end{remark}

For further details and related results, see \cite[Theorem 2.6]{GS}.

\subsection{Hida families of modular forms} \label{hida-subsec}

We recall basic notions of Hida's theory of $p$-adic families of modular forms. For details and proofs, see \cite{hida86b}, \cite{hida86a}, \cite[Chapter 7]{hida-elementary}.

Let $N\geq1$ be an integer and let $p$ be a prime number such that $p\nmid N$. Set $\Gamma\defeq1+p\Z_p\subset\Z_p^\times$. For our purposes, a ($p$-adic) \emph{Hida family} (of tame level $N$) consists of
\begin{itemize}
\item a complete local noetherian domain $\mathcal R$ that is finitely generated and flat as a module over the Iwasawa algebra $\cO[\![\Gamma]\!]\simeq\cO[\![T]\!]$, where $\cO$ is a suitable finite extension of $\Z_p$ to be specified in due course;
\item a (dense) collection of distinguished points
\[ \mathcal X^{\text{arith}}\subset\Hom_{\text{cont}}(\mathcal R,\bar\Q_p) \]
called \emph{arithmetic morphisms};
\item a formal $q$-expansion $\f=\sum_{n\geq1}a_n(\f)q^n\in\mathcal R[\![q]\!]$
\end{itemize}
such that for all $\eta\in\mathcal X^{\text{arith}}$ the power series 
\[ f_\eta\defeq\sum_{n\geq1}\eta\bigl(a_n(\f)\bigr)q^n\in\bar\Q_p[\![q]\!] \]
is the $q$-expansion of an ordinary cuspidal eigenform on $\Gamma_1(Np^r)$ for some $r=r_\wp\geq1$. The modular form $f_\eta$ is called the \emph{specialization of $\f$ at $\eta$}. By definition, a continuous homomorphism $\eta:\mathcal R\rightarrow\bar\Q_p$ is \emph{arithmetic} if the composition 
\[ \Gamma\longrightarrow\mathcal R^\times\overset\eta\longrightarrow\bar\Q_p^\times \]
of $\eta$ with the canonical map $\Gamma\rightarrow\mathcal R^\times$ has the form 
\begin{equation} \label{arithmetic-eq}
\gamma\longmapsto\psi(\gamma)\gamma^{k-2}
\end{equation}
for some integer $k\geq2$ and some finite order character $\psi$ of $\Gamma$. The kernels of arithmetic morphisms are called \emph{arithmetic primes} of $\mathcal R$. The integer $k$ in \eqref{arithmetic-eq} is the \emph{weight} of the arithmetic morphism (or of the associated arithmetic prime), while $\psi$ is its \emph{wild character}. If $\wp$ is an arithmetic prime of $\mathcal R$ of weight $k$, then we set
\[ k_\wp\defeq k,\quad\psi_\wp\defeq\psi. \]
Given an arithmetic prime $\wp$, we say that $\wp$ has \emph{trivial character} if its wild character $\psi_\wp$ is trivial; moreover, if $\wp$ corresponds to a morphism $\eta_\wp$, then we write $f_\wp$ for $f_{\eta_\wp}$, whose weight is $k_\wp$. The field $\mathcal F_\wp\defeq\mathcal R_\wp/\wp\mathcal R_\wp$ is a finite extension of $\Q_p$ to which the Fourier coefficients of $f_\wp$ belong; the quotient $\mathcal R/\wp$ is the valuation ring of $\mathcal F_\wp$. We fix an embedding $\iota_\wp:\mathcal F_\wp\hookrightarrow\bar\Q_p$. Finally, for each arithmetic $\wp$ the specialization $f_\wp$ is a $p$-stabilized newform (of tame conductor $M$) in the sense of \S \ref{stabilization-subsec} (\cite[Corollary 1.3]{hida86b}).

\begin{remark}
By a slight abuse of terminology, we will usually identify a Hida family with the formal $q$-expansion $\f\in\mathcal R[\![q]\!]$.
\end{remark}

By Hida theory (\cite[Theorem 2.1]{hida86b}), there is a ``big'' representation $\T=\T_{\f}$ of $G_\Q$ that (under standard assumptions on residual representations, \emph{cf.} \S \ref{residual-subsec}) is free of rank $2$ over $\mathcal R$ and satisfies the following property: for every arithmetic prime $\wp$ of $\mathcal R$ the quotient $\T_\wp/\wp\T_\wp$ is equivalent over $\bar\Q_p$ (\emph{i.e.}, after a finite base change) to the dual $V_{f_\wp,\fP}^*$ of the representation $V_{f_\wp,\fP}$ of $G_\Q$ attached to $f_\wp$ as in \S \ref{cohomological-subsec} (see, \emph{e.g.}, \cite[(1.5.5)]{NP}). The representation
\begin{equation} \label{hida-rep-eq}
\rho_{\f}:G_\Q\longrightarrow\GL(\T)\simeq\GL_2(\mathcal R) 
\end{equation}
is unramified outside $Np$ and 
\[ \tr\bigl(\rho_{\f}(\Frob_\ell)\bigr)=a_\ell(\f) \]
for all prime numbers $\ell\nmid Np$, where $\Frob_\ell$ denotes the conjugacy class in $G_\Q$ of an arithmetic Frobenius at $\ell$.

\begin{remark} \label{notation-rem}
From here on, in order to lighten our notation, for every modular form $g$ as above we simply write $V_g$ in place of $V_{g,\fP}$. Analogously, $T_{g,\fP}$ will be denoted by $T_g$.
\end{remark}

Now let $f\in S_k(\Gamma_1(M))$ be a $\fP$-ordinary normalized newform of weight $k\geq2$. Notation being as in \S \ref{stabilization-subsec}, set
\[ f_0\defeq\begin{cases}f&\text{if $p\,|\,M$},\\[3mm]f^\sharp&\text{if $p\nmid M$}. \end{cases} \]
The cusp form $f_0$ can be characterized as the unique (normalized) $\fP$-ordinary eigenform of weight $k$ and level divisible by $p$ with the property that $a_n(f_0)=a_n(f)$ except for those $n$ divisible by $p$ (\cite[Lemma 3.3]{hida-measure}). A fundamental result of Hida (\cite[Corollary 3.7]{hida86a}, \cite[\S 7.3, Theorem 3]{hida-elementary}) asserts that there is a unique Hida family $\f\in\mathcal R[\![q]\!]$ such that $f_0=f_\wp$ for some arithmetic prime $\wp$ of $\mathcal R$. This property is often expressed by saying that there is a unique Hida family \emph{passing through} $f$.

\section{Greenberg's conjecture for Hida families} \label{greenberg-sec}

After recalling some properties of root numbers in Hida families of modular forms, we state Greenberg's conjecture (Conjecture \ref{greenberg-conj}), which predicts that the analytic ranks of all but finitely many forms of even weight and trivial character in a Hida family should be as small as possible.

\subsection{Root numbers in Hida families} \label{root-numbers-subsec}

Let $g\in S_k(\Gamma_0(M))$ be a normalized newform of weight $k\geq2$ with $q$-expansion $g(q)=\sum_{n\geq1}a_n(g)q^n$ and write $L(g,s)$ for its $L$-function. The completed $L$-function of $g$ is $\Lambda(g,s)\defeq(2\pi)^{-s}\Gamma(s)M^{s/2}L(g,s)$, where $\Gamma(s)$ is the classical $\Gamma$-function. It is well known that $\Lambda(g,s)$ satisfies a functional equation 
\begin{equation} \label{L-functional-eq}
\Lambda(g,s)=\varepsilon(g)\Lambda(g,k-s) 
\end{equation}
where $\varepsilon(g)\in\{\pm1\}$ is the \emph{root number} of $g$ (see, \emph{e.g.}, \cite[Theorem 9.27]{Knapp}). 

\begin{definition} \label{analytic-rank-def}
The \emph{analytic rank} of $g$ is $r_\an(g)\defeq\ord_{k/2}L(g,s)$. 
\end{definition}

It follows that $r_\an(g)$ is even if $\varepsilon(g)=1$ and is odd if $\varepsilon(g)=-1$, \emph{i.e.}, ${(-1)}^{r_\an(g)}=\varepsilon(g)$.

\begin{remark} \label{analytic-rank-rem}
The definition of analytic rank makes sense also when the eigenform $g$ is not a newform (and so $g$ may not satisfy a functional equation of the shape \eqref{L-functional-eq}). Lemma \ref{L-functions-lemma} ensures that $r_\an(g)=r_\an(g^\sharp)$.
\end{remark}

\begin{remark}
For a description of the functional equation satisfied by a newform on $\Gamma_0(M)$ whose character is not necessarily trivial, the reader is referred, \emph{e.g.}, to \cite[Theorems 4.3.12 and 4.6.15]{Miyake}.
\end{remark}

Now let $\f\in\mathcal R[\![q]\!]$ be a Hida family as in \S \ref{hida-subsec} and let $\wp$ be an arithmetic prime of $\mathcal R$ of even weight and trivial character. By Remark \ref{old-rem}, if $k_\wp>2$, then there is a newform $f^\flat_\wp$ of level $N$ and trivial character such that $(f^\flat_\wp{)}^\sharp=f_\wp$. The same is true if $k_\wp=2$ and $f_\wp$ is not new at $p$, while if $f_\wp$ is a newform of level $Np^r$ for some $r\geq1$, then we set $f_\wp^\flat\defeq f_\wp$. For every $\wp$ as above, let $\iota_\wp:\mathcal F_\wp\hookrightarrow\bar\Q_p$ be the embedding that we fixed in \S \ref{hida-subsec}. Fix also an embedding $\iota_\infty:\bar\Q\hookrightarrow\C$. If $f_\wp(q)=\sum_{n\geq1}a_\wp(n)q^n$, then the set $\bigl\{\iota_\wp(a_\wp(n))\mid n\geq1\bigr\}$ generates a finite extension of $\Q$ inside $\bar\Q_p$, so $\Q\bigl(\iota_\wp(a_\wp(n))\mid n\geq1\bigr)\subset\bar\Q$. By identifying $f_\wp$ with $f_\wp^{\iota_\infty}(q)\defeq\sum_{n\geq1}\iota_\infty\bigl(\iota_\wp(a_\wp(n))\bigr)q^n$, we can view $f_\wp$ as a classical modular form with complex Fourier coefficients. Analogous considerations apply to $f_\wp^\flat$.

Except for finitely many arithmetic primes $\wp$ of $\mathcal R$, which were explicitly described by Mazur, Tate and Teitelbaum in \cite{MTT}, the root number $\varepsilon(f^\flat_\wp)$ of the newform $f_\wp^\flat$ is constant as $\wp$ varies (see, \emph{e.g.}, \cite[Proposition 12.7.14.4, (i)]{Nek-Selmer} or \cite[(3.4.4)]{NP} for details). An arithmetic prime not belonging to this finite set of bad (or, better, \emph{exceptional}) primes is called \emph{generic}; similarly, the specialization of $\f$ at a generic arithmetic prime is called \emph{generic}.

\begin{definition} \label{root-number-def}
Let $\wp$ be a generic arithmetic prime of $\mathcal R$. The integer $\varepsilon(\f)\defeq\varepsilon(f^\flat_\wp)$ is the \emph{root number of $\f$}. 
\end{definition}

The next lemma will be used to deduce the vanishing of special values of higher weight forms in Hida families.

\begin{lemma} \label{root-number-lemma}
If $\wp$ is an arithmetic prime of $\mathcal R$ such that $k_\wp>2$, then $\varepsilon(f^\flat_\wp)=\varepsilon(\f)$.
\end{lemma}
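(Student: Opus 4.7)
The plan is to reduce the lemma to the description of the exceptional primes. Recall that the author set up Definition \ref{root-number-def} precisely so that $\varepsilon(\f)=\varepsilon(f_\wp^\flat)$ for every \emph{generic} arithmetic prime $\wp$, where ``generic'' means $\wp$ avoids the finite exceptional set of Mazur--Tate--Teitelbaum. So everything hinges on showing that this finite exceptional set is disjoint from the set of arithmetic primes of weight $>2$.

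First I would invoke the structure result from \cite{MTT}: the possible failure of constancy of the root number along an ordinary family occurs only when the local sign at $p$ changes under specialization, and this change is confined to the weight $2$ slice of $\mathrm{Spec}(\mathcal R)$. Concretely, the local Euler factor of $f_\wp^\flat$ at $p$ for $k_\wp>2$ is of principal series / supercuspidal type of conductor $p^{\mathrm{cond}(\psi_\wp)}$ (where $\psi_\wp$ is the wild part of the nebentype coming from the arithmetic character in \eqref{arithmetic-eq}), and the local root number at $p$ depends only on the restriction of the Galois representation to a decomposition group at $p$, whose variation is controlled by Hida theory in a way that preserves the sign on the higher weight locus. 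The introduction has already recorded this as ``$\wp$ that have necessarily weight $2$ and have been described by Mazur--Tate--Teitelbaum''.

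Granting this description, the proof is then immediate: if $k_\wp>2$ then $\wp$ lies outside the Mazur--Tate--Teitelbaum exceptional set, so by definition $\wp$ is generic; applying Definition \ref{root-number-def} to this generic prime gives
\[ \varepsilon(f_\wp^\flat)=\varepsilon(\f), \]
which is the desired equality.

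The only real obstacle, therefore, is to cite the correct reference for the weight $2$ confinement of the exceptional primes; the author has already signaled \cite[Proposition 12.7.14.4, (i)]{Nek-Selmer} and \cite[(3.4.4)]{NP} as the sources for the constancy of the sign on generic primes, and the same sources describe the exceptional locus as lying in weight $2$. Since the whole argument is a bookkeeping exercise around Definition \ref{root-number-def}, no further input is needed and the proof can be given in two or three lines.
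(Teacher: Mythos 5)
Your proposal is correct and follows essentially the same route as the paper: the paper's entire proof is the one-line observation (citing \cite[(3.1.1)]{NP}) that exceptional primes have weight $2$, so any $\wp$ with $k_\wp>2$ is generic and Definition \ref{root-number-def} applies directly. The extra discussion in your middle paragraph about local Euler factors at $p$ (attributed to $f_\wp^\flat$, which in fact has level prime to $p$) is not needed and is a bit muddled, but it does not affect the validity of the argument since the weight-$2$ confinement you invoke is exactly the cited fact.
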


\begin{proof} As recorded in \cite[(3.1.1)]{NP}, an exceptional prime has always weight $2$. \end{proof}

It is convenient to introduce the following notion.

\begin{definition} \label{minimal-rank-def}
The \emph{minimal admissible generic rank} of $\f$ is
\[ r_{\min}(\f)\defeq\frac{1-\varepsilon(\f)}{2}. \] 
\end{definition}

Equivalently, the minimal admissible generic rank of $\f$ is the smallest analytic rank of a generic specialization of $\f$ that is allowed by the functional equation: $r_{\min}(\f)=0$ if $\varepsilon(\f)=1$ and $r_{\min}(\f)=1$ if $\varepsilon(\f)=-1$.

\subsection{Greenberg's conjecture} \label{greenberg-subsec}

The conjecture that we state below, which is concerned with the analytic rank of an even weight form in a Hida family $\f\in\mathcal R[\![q]\!]$ rather than merely with its root number, is essentially due to Greenberg (\cite{Greenberg-CRM}). 

\begin{conjecture}[Greenberg] \label{greenberg-conj}
The equality
\[ r_\an(f_\wp)=r_{\min}(\f) \]
holds for all but finitely many arithmetic primes $\wp$ of $\mathcal R$ of even weight.
\end{conjecture}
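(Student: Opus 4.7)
The plan is to split into cases according to the sign $\varepsilon(\f)\in\{\pm1\}$, since the two scenarios require rather different analytic and algebraic inputs. By the parity relation ${(-1)}^{r_\an(f_\wp)}=\varepsilon(f^\flat_\wp)$ combined with Lemma \ref{root-number-lemma}, outside a finite exceptional set one has $r_\an(f_\wp)\equiv r_{\min}(\f)\pmod{2}$, so the conjecture reduces to establishing the upper bound $r_\an(f_\wp)\leq r_{\min}(\f)$ for all but finitely many arithmetic primes $\wp$ of even weight.

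When $\varepsilon(\f)=+1$, so $r_{\min}(\f)=0$, the natural tool is the Mazur--Kitagawa two-variable $p$-adic $L$-function $L_p(\f)$ attached to $\f$, whose specialization at an arithmetic prime $\wp$ interpolates (up to an explicit period) the algebraic part of $L(f_\wp,k_\wp/2)$. I would first show that the restriction of $L_p(\f)$ to the central critical line does not vanish identically, for instance by exhibiting a single arithmetic specialization with non-vanishing central value. Invoking the Weierstrass preparation theorem along the weight variable then yields that only finitely many arithmetic specializations produce a zero of the interpolated central values, hence of the classical ones.

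When $\varepsilon(\f)=-1$, so $r_{\min}(\f)=1$, the strategy mirrors the argument for Theorem~B. I would (i) choose an auxiliary imaginary quadratic field $K$ satisfying the Heegner hypothesis relative to the tame level $N$ and with quadratic character $\chi_K$ such that $L(f_\wp\otimes\chi_K,k_\wp/2)\neq 0$ for all but finitely many $\wp$, via Bump--Friedberg--Hoffstein-type non-vanishing in families; (ii) invoke Howard's big Heegner class in $H^1(K,\T^\dagger)$ and specialize it to $y_\wp\in H^1\bigl(K,T^\dagger_{f_\wp}\bigr)$, identifying $y_\wp$ with a multiple of Nekov\'a\v{r}'s Heegner cycle through the control theorems of Castella and Ota; (iii) exploit the mod $\fP$ non-triviality of the base Heegner point on $A_f$ together with Fischman's results on the image of $\Lambda$-adic representations to show that $y_\wp$ is non-torsion in the relevant Abel--Jacobi image for all but finitely many $\wp$; (iv) apply Zhang's higher-weight Gross--Zagier formula to translate this into $L'(f_\wp/K,k_\wp/2)\neq 0$; (v) combine with the factorisation $L(f_\wp/K,s)=L(f_\wp,s)\,L(f_\wp\otimes\chi_K,s)$ and step (i) to conclude that $r_\an(f_\wp)=1$.

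The principal obstacle in the case $\varepsilon(\f)=-1$ is the non-degeneracy of the Gillet--Soul\'e-type height pairing required by Zhang's formula: without it, non-triviality of $y_\wp$ in the Abel--Jacobi image cannot be converted into non-vanishing of $L'(f_\wp/K,k_\wp/2)$, which is exactly why the present paper has to state Theorem~B conditionally. The analogous obstacle in the case $\varepsilon(\f)=+1$ is proving non-identical vanishing of $L_p(\f)$ along the central line, a subtle two-variable Iwasawa-theoretic problem that in full generality remains out of reach. A further complication common to both cases is the handling of the exceptional primes of $\mathcal R$ and of the requirement that specializations in every even weight congruence class modulo $2(p-1)$ be covered rather than, as in Theorems~A and~B, merely those congruent to $2$.
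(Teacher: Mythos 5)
The statement you are addressing is a conjecture, not a theorem: the paper presents Conjecture~\ref{greenberg-conj} (due to Greenberg) without proof and explicitly describes it as an open problem for which ``relatively little is known.'' There is therefore no ``paper's own proof'' to compare against, and your text should not be labelled a proof of this statement.

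That said, your sketch is a reasonable summary of the \emph{strategies} that exist in the literature and in this paper towards partial results. The $\varepsilon(\f)=-1$ branch of your outline---choose an auxiliary imaginary quadratic $K$ via Bump--Friedberg--Hoffstein/Waldspurger, specialize Howard's big Heegner class via Castella--Ota, deduce non-torsionness of $y_{\wp,K}$ from mod-$\fP$ non-vanishing of the base Heegner point together with Fischman's large-image results, then apply Zhang's higher-weight Gross--Zagier formula---is exactly the architecture of Theorem~\ref{main-thm} in the paper, and you correctly flag the same obstacle the paper does: that argument is \emph{conditional} on the non-degeneracy of the Gillet--Soul\'e height pairing (Assumption~\ref{main-ass3}), which is itself a conjecture. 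Similarly, the $\varepsilon(\f)=+1$ route via non-identical vanishing of the Mazur--Kitagawa two-variable $p$-adic $L$-function along the central line is the approach taken by Howard in \cite{Howard-derivatives} and referenced in Remark~\ref{mk-rem}, and you correctly note that its full-generality input is not currently available.

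Two further gaps prevent even your conditional sketch from reaching the conjectured statement. First, the paper's Theorem~\ref{main-thm} (and the machinery behind it, in particular the specialization results and the identification $\bar\T\simeq\bar T^\dagger_{f_\wp}$) only covers arithmetic primes of weight $k_\wp\equiv2\pmod{2(p-1)}$ and trivial character, for a single fixed good ordinary prime $p\in\Xi_f$; the conjecture asks for \emph{all but finitely many} arithmetic primes $\wp$ of even weight, including every congruence class and non-trivial wild character. You acknowledge this at the end but do not indicate how the gap would be bridged, and indeed it is not bridged in the paper. Second, your parity reduction (``the conjecture reduces to the upper bound $r_\an(f_\wp)\leq r_{\min}(\f)$'') is correct, but it buries the fact that this upper bound \emph{is} the entire content of the conjecture; passing from ``non-vanishing of $L'$ over $K$'' to ``$r_\an(f_\wp)=1$ over $\Q$'' also silently requires non-vanishing of $L(f_\wp\otimes\chi_K,k_\wp/2)$ for the same $\wp$, a non-trivial family non-vanishing input that the paper secures by fixing $K$ once (depending on $f$) rather than per $\wp$. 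In short: your write-up is a fair survey of what is known, but it is not a proof, and cannot be one given the present state of the field.
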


In other words, Conjecture \ref{greenberg-conj} predicts that the analytic ranks of even weight modular forms in a Hida family $\f$ should be as small as allowed by the functional equation, with at most finitely many exceptions. Notice that, in light of Lemma \ref{L-functions-lemma}, we can equivalently formulate Conjecture \ref{greenberg-conj} in terms of $f^\flat_\wp$.

\begin{remark}
Greenberg's conjectures on analytic ranks were proposed in \cite{Greenberg-CRM} in a somewhat more general form, so Conjecture \ref{greenberg-conj} should be viewed as a special case of them. The reader is referred to \cite[p. 101, Conjecture]{Greenberg-CRM} for the original conjecture for newforms of weight $2$ and to the discussion following it for a conjecture for newforms of arbitrary weight.
\end{remark}

\begin{remark} \label{p-adic-variant-rem}
Conjecture \ref{greenberg-conj} admits a $p$-adic variant in terms of $p$-adic $L$-functions of modular forms, after Mazur--Tate--Teitelbaum (see, \emph{e.g.}, \cite[p. 475]{NP}).
\end{remark}

Relatively little is known about Conjecture \ref{greenberg-conj} or the conjectures in \cite{Greenberg-CRM}. Here we would like to mention results by Greenberg (\cite{greenberg-BSD}, \cite{greenberg-critical}) and by Rohrlich (\cite{rohrlich-anticyc}, \cite{rohrlich-cyc}, \cite{rohrlich-division}), which naturally fit in with the setting of \cite{Greenberg-CRM}, but apply only to certain rather small subsets of newforms and deal mainly with weight $2$ and almost exclusively with analytic rank $0$. Directly germane to Conjecture \ref{greenberg-conj} is more recent work by Howard (\cite{Howard-derivatives}), who uses his theory of big Heegner points (\cite{Howard-Inv}) to show a result of the following type: if in a Hida family there is a weight $2$ form whose analytic rank is $1$, then all but finitely many weight $2$ forms in the family enjoy the same property (in fact, as a consequence of the existence of the Mazur--Kitagawa two-variable $p$-adic $L$-function, the same is true for analytic rank $0$ as well). We also remark that analytic rank $0$ results for infinitely many (or even all but finitely many) higher (even) weight forms in a Hida family are available (see, \emph{e.g.}, \cite[Corollary 4]{BD-rational} and \cite[Theorem 7]{Howard-derivatives}).

\begin{remark}
By exploiting the theory of quaternionic big Heegner points developed in \cite{LV-MM}, some of the results by Howard are extended in \cite{LV-Pisa} to more generale arithmetic contexts.
\end{remark}

Under some technical assumptions (among which, the non-degeneracy of certain height pairings \emph{\`a la} Gillet--Soul\'e between Heegner cycles), in this paper we prove a result (Theorem B of the introduction) in the direction of Conjecture \ref{greenberg-conj} when $r_{\min}(\f)=1$ and the prime $\wp$ has trivial character and weight $k_\wp>2$ such that $k_\wp\equiv2\pmod{2(p-1)}$. 



\section{Weight two newforms and Hida families} \label{weight-2-sec}

In this section, we consider a newform $f$ of weight $2$, square-free level $N$ and trivial character such that $r_\an(f)=1$. We introduce a set $\Xi_f$ that consists of all but finitely many prime numbers that are ordinary for $f$, then for every $p\in\Xi_f$ we explain how to choose a prime ideal $\fP$ of $\bar\Z$ above $p$ such that $f$ is $\fP$-ordinary. For every $p\in\Xi_f$, let $\hf$ be the $p$-adic Hida family passing through $f$, as in \S \ref{hida-subsec}. The families $\hf$, where $p$ varies in $\Xi_f$ (or, in the rank $0$ case, in an analogously defined set $\Omega_f$), are essentially those for which in later sections we shall prove our arithmetic results (Theorem A and Theorem B in the introduction). It is worth emphasizing that $\Xi_f$ is defined so that, for each $p\in\Xi_f$, the $\fP$-adic representations (as well as their reductions) attached to suitable specializations of $\hf$ have non-solvable images. The fact that one can choose $\Xi_f$ in such a way that this property is satisfied rests upon results of Fischman on the image of $\Lambda$-adic Galois representations (\cite{Fischman}).

\subsection{Base change to quadratic fields} \label{base-change-subsec}

As in \S \ref{root-numbers-subsec}, write $g\in S_k(\Gamma_0(M))$ for a normalized newform with $q$-expansion $g(q)=\sum_{n\geq1}a_n(g)q^n$. Let $K$ be a quadratic field (later on, we will again take $K$ to be imaginary) and let $\chi_K$ be the Dirichlet character associated with $K$. The $L$-function of $g$ over $K$ is defined to be the product
\begin{equation} \label{base-change-L-eq}
L(g/K,s)\defeq L(g,s)L(g\otimes\chi_K,s), 
\end{equation}
where $L(g,s)$ is, as usual, the $L$-function of $g$ and 
\[ L(g\otimes\chi_K,s)\defeq\sum_{n\geq1}\frac{a_n(g)\chi_K(n)}{n^s}. \]
Let $D_K$ be the discriminant of $K$ and set $\Lambda(g,\chi_K,s)\defeq(2\pi)^{-s}\Gamma(s)(D_K^2M)^{s/2}L(g\otimes\chi_K,s)$. As is pointed out, \emph{e.g.}, in \cite[p. 543]{BFH}, if $M$ and $D_K$ are coprime, \emph{i.e.}, if no prime factor of $M$ ramifies in $K$, then $\Lambda(g\otimes\chi_K,s)$ satisfies the functional equation
\[ \Lambda(g\otimes\chi_K,s)=\varepsilon(g)\chi_K(-M)\Lambda(g\otimes\chi_K,k-s) \]
where, as in \eqref{L-functional-eq}, $\varepsilon(g)$ is the root number of $g$. Define $\varepsilon(g,\chi_K)\defeq\varepsilon(g)\chi_K(-M)$. If every prime dividing $M$ splits in $K$, then $\chi_K(M)=1$, hence $\varepsilon(g,\chi_K)=\varepsilon(g)\chi_K(-1)$. It follows that
\begin{equation} \label{twisted-root-number-eq}
\varepsilon(g,\chi_K)=\begin{cases}\varepsilon(g)&\text{if $K$ is real},\\[3mm]-\varepsilon(g)&\text{if $K$ is imaginary}.\end{cases} 
\end{equation}
In the following definition, $K$ is a quadratic field such that $D_K$ and $M$ are coprime.

\begin{definition} \label{analytic-rank-K-def}
The \emph{analytic rank} of $g$ \emph{over $K$} is $r_\an(g/K)\defeq\ord_{k/2}L(g/K,s)$. 
\end{definition}

We immediately obtain

\begin{proposition} \label{analytic-rank-K-prop}
Let $K$ be an imaginary quadratic field in which all prime factors of $M$ split. Then $r_\an(g/K)\geq1$.
\end{proposition}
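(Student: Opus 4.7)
The plan is to show that $r_\an(g/K)$ is odd; the inequality $r_\an(g/K) \geq 1$ then follows immediately. By definition \eqref{base-change-L-eq}, $L(g/K,s) = L(g,s)\, L(g\otimes\chi_K,s)$, and the order of vanishing of a product at a point is the sum of the orders of the factors. Thus
\[
r_\an(g/K) = r_\an(g) + \ord_{s=k/2} L(g\otimes\chi_K,s).
\]

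Next I would invoke the functional equations to control the parities of the two summands. The root number of $g$ satisfies $(-1)^{r_\an(g)} = \varepsilon(g)$, and likewise the functional equation for $\Lambda(g\otimes\chi_K,s)$ recalled just before \eqref{twisted-root-number-eq} gives $(-1)^{\ord_{s=k/2} L(g\otimes\chi_K,s)} = \varepsilon(g,\chi_K)$. The hypothesis that every prime dividing $M$ splits in $K$ guarantees both that $\gcd(D_K,M)=1$ (so the functional equation for the twist is available in the form used above) and that $\chi_K(M)=1$, so we are in the situation of \eqref{twisted-root-number-eq}.

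Since $K$ is imaginary, \eqref{twisted-root-number-eq} yields $\varepsilon(g,\chi_K) = -\varepsilon(g)$. Hence the two factors contribute orders of opposite parity, so their sum $r_\an(g/K)$ is odd and in particular strictly positive. The main (and only real) step is therefore the root-number computation \eqref{twisted-root-number-eq}, which has already been established in the excerpt; there is no substantive obstacle and the argument reduces to a parity check on the functional equation of the base-changed $L$-function.
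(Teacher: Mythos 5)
Your proof is correct and follows the same route as the paper's one-line argument ("Combine \eqref{base-change-L-eq} and \eqref{twisted-root-number-eq}"), merely spelling out the parity bookkeeping: the two factors $L(g,s)$ and $L(g\otimes\chi_K,s)$ have root numbers of opposite sign by \eqref{twisted-root-number-eq} since $K$ is imaginary, so their orders of vanishing at $s=k/2$ have opposite parity, and their sum $r_\an(g/K)$ is therefore an odd non-negative integer, hence $\geq 1$.
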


\begin{proof} Combine \eqref{base-change-L-eq} and \eqref{twisted-root-number-eq}. \end{proof}

\begin{remark}
Proposition \ref{analytic-rank-K-prop} is valid, more generally, for imaginary quadratic fields $K$ such that $(D_K,M)=1$ and the number of primes dividing $M$ that are inert in $K$ (counted with multiplicity) is even. This is the arithmetic setting that is considered, \emph{e.g.}, in \cite{LV-MM} and \cite{LV-Pisa} (on the proviso that if a prime $\ell$ is inert in $K$, then $\ell^2\nmid M$); it involves working with big Heegner points that are built out of Heegner points on Shimura curves attached to arbitrary (indefinite) quaternion algebras over $\Q$ (see, \emph{e.g.}, \cite[\S 6]{LV-MM} and \cite[\S 2]{LV-IJNT} for an introduction to Hida theory on quaternion algebras). Since in the present article we will ultimately be interested in results over $\Q$, there is no need for us to consider this more general situation here (notice, moreover, that the relevant specialization results are not available yet in the quaternionic case).
\end{remark}

\begin{remark}
The choice that we made in \eqref{base-change-L-eq} to take the product $L(g,s)L(g\otimes\chi_K,s)$ as the definition of $L(g/K,s)$ is perfectly adequate for our goals. Actually, one could also adopt a broader representation-theoretic definition for $L(g/K,s)$ as the $L$-function of the system of $\lambda$-adic Galois representations of $G_K$ attached to $g$, where $\lambda$ varies over all the primes of $\Q_g$; thanks to work of Deligne, Langlands and Carayol (see, \emph{e.g.}, \cite{carayol}), we know that this system is (strictly) compatible in the sense of Serre (\cite[Chapter I, \S 2.3]{serre-abelian}). From this perspective, equality \eqref{base-change-L-eq} is an instance of a formula for the $L$-function of $g$ over a finite abelian extension of $\Q$ (see, \emph{e.g.}, \cite[Proposition 3]{Brunault} for a precise statement in the case of elliptic curves).
\end{remark}

\subsection{The newform $f$ of weight $2$} \label{choice-subsec}

Here we introduce the newform $f$ that will be one of our basic data. Let $f\in S_2(\Gamma_0(N))$ be a normalized newform of weight $2$ and level $N$ with $q$-expansion $f(q)=\sum_{n\geq1}a_n(f)q^n$. Assume that 
\begin{itemize}
\item $N$ is square-free.
\end{itemize}
This condition guarantees (see, \emph{e.g.}, \cite[p. 34]{ribet}) that $f$ has no complex multiplication in the sense of \cite[p. 34, Definition]{ribet}. Equivalently, the abelian variety $A_f$ over $\Q$ attached to $f$ has no complex multiplication. Furthermore, since $N$ is square-free, $A_f$ is semistable, so all endomorphisms of $A$ are defined over $\Q$ (see, \emph{e.g.}, \cite[Corollary 1.4, (a)]{ribet-annals}). In particular, if $\End_{\bar\Q}(A_f)$ denotes the ring of \emph{all} endomorphisms of $A_f$, then $\Q_f=\End_{\bar\Q}(A_f)\otimes\Q$ and $\End_{\bar\Q}(A_f)$ is commutative.

Now write $\mathfrak H_2(\Gamma_0(N),\Z)^{\mathrm{red}}$ for the reduced Hecke algebra acting on $S_2(\Gamma_0(N))$. As in \cite[\S 2.3]{Fischman}, let $\Sigma_f$ be the set of prime numbers $\ell$ satisfying the following conditions:
\begin{itemize}
\item $\ell$ is ordinary for $f$ (\emph{i.e.}, $\ell\nmid a_\ell(f)$ in $\cO_{\Q_f}$);
\item $\ell$ does not belong to the finite set that is excluded by \cite[Theorem 2.1]{Fischman};
\item $\ell$ does not divide the discriminant of $\mathfrak H_2(\Gamma_0(N),\Z)^{\mathrm{red}}$;
\item $\ell$ does not divide $210N$.
\end{itemize}

From now until Section \ref{sha-sec2}, the following assumption will be in force.

\begin{assumption} \label{main-ass}
$r_\an(f)=1$.
\end{assumption}

In particular, if $\varepsilon(f)$ denotes, as in \eqref{L-functional-eq}, the root number of $f$, then $\varepsilon(f)=-1$.



\subsection{Heegner points and choice of $p$} \label{heegner-subsec}

Let $f\in S_2(\Gamma_0(N))$ be the newform that was chosen in \S \ref{choice-subsec}. The abelian variety $A_f$ is a quotient of the Jacobian of the (compact) modular curve $X_0(N)$ of level $\Gamma_0(N)$ and the dimension of $A_f$ is equal to the degree of $\Q_f$. As remarked in \S \ref{cohomological-subsec}, the ring of endomorphisms of $A_f$, which are all defined over $\Q$, contains (a subring isomorphic to) $\cO_f$. 

Recall that, by Assumption \ref{main-ass}, $r_\an(f)=1$, which implies that $\varepsilon(f)=-1$. By a result of Waldspurger (\cite{Waldspurger}), reproved by Bump--Friedberg--Hoffstein in a formulation that is more convenient for our purposes (\cite[p. 543, Theorem, (ii)]{BFH}), there exists an imaginary quadratic field $K$, whose associated Dirichlet character will be denoted (as in \S \ref{base-change-subsec}) by $\chi_K$, such that
\begin{itemize}
\item[(a)] the primes dividing $N$ split in $K$;
\item[(b)] $r_\an(f\otimes\chi_K)=0$.
\end{itemize}
Fix once and for all such a field $K$. In particular, condition (a) tells us that $K$ satisfies the \emph{Heegner hypothesis} relative to $N$. The theory of complex multiplication allows one to define a systematic supply of Heegner points $\alpha_c\in A_f(K_c)$ indexed by integers $c\geq1$ coprime to $N$, where $K_c$ is the ring class field of $K$ of conductor $c$ (in particular, $K_1$ is the Hilbert class field of $K$); these points arise by modularity from Heegner points $x_c\in X_0(N)(K_c)$. It is well known that the points $\alpha_c$ form an Euler system in the sense of Kolyvagin. For details, the reader is referred to \cite{Gross}, \cite{Kol}, \cite{Kol-Log}.

Now let 
\begin{equation} \label{alpha-K-eq}
\alpha_K\defeq\tr_{K_1/K}(\alpha_1)\in A_f(K)
\end{equation}
be the $K$-rational  Heegner point on $A_f$ that is considered in \cite[\S 2.3]{Kol-Log}. In light of formula \eqref{base-change-L-eq}, Assumption \ref{main-ass} and condition (b) above imply that $r_\an(f/K)=1$: by the Gross--Zagier formula (\cite[Theorem 6.3]{GZ}), this is tantamount to $\alpha_K$ being non-torsion. 

Before proceeding further, we need two auxiliary results.

\begin{lemma} \label{hom-lemma}
Let $R$ be an integral domain and let $M$ be a finitely generated $R$-module. An element $m\in M$ is torsion over $R$ if and only if $h(m)=0$ for all $h\in\Hom_R(M,R)$.
\end{lemma}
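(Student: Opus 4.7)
The plan is to prove both directions separately, with the forward direction essentially trivial and the real content in the backward direction. First, if $m$ is torsion, choose $r\in R\setminus\{0\}$ with $rm=0$; then for any $h\in\Hom_R(M,R)$ we have $rh(m)=h(rm)=0$, and since $R$ is a domain and $r\neq 0$ this forces $h(m)=0$.

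For the converse I would argue by contrapositive: assuming $m$ is not torsion, I construct an $h\in\Hom_R(M,R)$ with $h(m)\neq 0$. The first step is to reduce to the torsion-free case. Because $R$ is a domain, the torsion submodule $M_{\tors}\subset M$ is a genuine submodule, the quotient $N\defeq M/M_{\tors}$ is torsion-free, and the image $\bar m$ of $m$ in $N$ is nonzero. Since any $R$-linear map $N\to R$ lifts to an $R$-linear map $M\to R$ (via the projection $M\twoheadrightarrow N$), it suffices to produce $\bar h\in\Hom_R(N,R)$ with $\bar h(\bar m)\neq 0$.

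Next, let $K\defeq\Frac(R)$. Since $N$ is torsion-free, the localization map $N\to N\otimes_R K$ is injective, and since $N$ is finitely generated, $N\otimes_R K$ is a finite-dimensional $K$-vector space. The image of $\bar m$ is nonzero there, so a standard linear-algebra argument over $K$ gives a $K$-linear functional $\varphi\colon N\otimes_R K\to K$ with $\varphi(\bar m\otimes 1)\neq 0$. Composing with $N\to N\otimes_R K$ produces an $R$-linear map $\psi\colon N\to K$ with $\psi(\bar m)\neq 0$.

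The final step, and the one slight subtlety, is clearing denominators to land in $R$ rather than $K$. Here the finite generation of $N$ is crucial: if $n_1,\dots,n_d$ generate $N$ and we write $\psi(n_i)=a_i/b_i$ with $a_i,b_i\in R$ and $b_i\neq 0$, then $b\defeq b_1\cdots b_d\in R\setminus\{0\}$ satisfies $b\psi(N)\subseteq R$. Setting $\bar h\defeq b\psi$, we have $\bar h\in\Hom_R(N,R)$ and $\bar h(\bar m)=b\psi(\bar m)\neq 0$ since $K$ is a field and both factors are nonzero. Pulling back to $M$ yields the required homomorphism. The only place where anything nontrivial happens is the denominator-clearing step, so the main conceptual ingredient is really just the fact that a finitely generated torsion-free module over a domain embeds into a finitely generated free module after inverting a single nonzero element.
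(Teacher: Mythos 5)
Your proof is correct. The forward direction matches the paper's. For the converse the paper takes a different, though related, route: instead of passing to the torsion-free quotient $N=M/M_{\tors}$ and then to $N\otimes_R\Frac(R)$ as you do, it chooses a \emph{free submodule} $M'\subseteq M$ of full rank with $m$ as part of a basis, observes that $M/M'$ is finitely generated torsion so some $r\in R\setminus\{0\}$ multiplies $M$ into $M'$, and then composes multiplication by $r$ with a functional on the free module $M'$ picking out the $m$-coordinate. Both arguments hinge on the same denominator-clearing principle (a finitely generated module over a domain sits between free modules of the same rank up to a nonzero scalar), but they realize it on opposite sides: the paper works inside $M$ with a free submodule and never mentions the fraction field, whereas you work with the torsion-free quotient and invoke linear algebra over $K=\Frac(R)$. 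The paper's version is perhaps more self-contained; yours isolates the reduction to the torsion-free case more explicitly and is the more ``standard'' phrasing of the lemma one finds in commutative algebra texts. Either is a fine proof, and your clearing-of-denominators step is carried out correctly (note $b_i\neq0$ and $K$ is a field, so $b\psi(\bar m)\neq0$).
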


\begin{proof} It is trivial that if $m$ is torsion, then $h(m)=0$ for all $h\in\Hom_R(M,R)$ (the fact that $M$ is finitely generated plays no role here). Conversely, suppose that $m$ is not torsion. Let us fix a free $R$-submodule $M'$ of $M$ such that $m$ is an element of a basis of $M'$ over $R$ and $\rank_RM'=\rank_RM$. Then $M/M'$ is a finitely generated torsion $R$-module, hence there is $r\in R\smallsetminus\{0\}$ such that $r$ annihilates $M/M'$; this means that $rm'\in M'$ for all $m'\in M$. Let $[r]\in\Hom_R(M,M')$ be the multiplication-by-$r$ map, let $\tilde h\in\Hom_R(M',R)$ be any map such that $\tilde h(m)\not=0$ and set $h\defeq\tilde h\circ[r]\in\Hom_R(M,R)$. Then $h(m)\not=0$, and we are done. \end{proof}

All endomorphisms of $A_f$ are defined over $K$ (in fact, over $\Q$), so the Mordell--Weil group $A_f(K)$ is a module over $\cO_f$.

\begin{lemma} \label{nontorsion-lemma}
The point $\alpha_K\in A_f(K)$ is non-torsion over $\cO_f$.
\end{lemma}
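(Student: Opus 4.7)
The plan is to combine two facts: first, that $\alpha_K$ is non-torsion as an element of the abelian group $A_f(K)$; and second, that being non-torsion over $\Z$ automatically upgrades to non-torsion over $\cO_f$ since $\cO_f$ is a finitely generated $\Z$-module.

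First I would argue that $\alpha_K$ is non-torsion over $\Z$. By Assumption \ref{main-ass} and condition (b) on the imaginary quadratic field $K$, namely $r_\an(f)=1$ and $r_\an(f\otimes\chi_K)=0$, the factorization \eqref{base-change-L-eq} gives $r_\an(f/K)=1$. The Gross--Zagier formula from \cite{GZ} then expresses the Néron--Tate height of $\alpha_K$ (up to an explicit nonzero factor) in terms of $L'(f/K,k/2)\neq 0$, so $\alpha_K$ has infinite order in the abelian group $A_f(K)$. (This is precisely the formulation recalled just before the lemma.)

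Next I would upgrade this to non-torsion over $\cO_f$. Suppose, for contradiction, that there exists $\lambda\in\cO_f\smallsetminus\{0\}$ with $\lambda\cdot\alpha_K=0$ (using the fact, already noted in \S\ref{cohomological-subsec} and at the beginning of \S\ref{heegner-subsec}, that $\End(A_f)\simeq\cO_f$ with all endomorphisms defined over $\Q$, so $A_f(K)$ carries an $\cO_f$-module structure). The quotient $\cO_f/(\lambda)$ is finite, being a quotient of the Dedekind domain $\cO_f$ by a nonzero ideal; in particular $(\lambda)\cap\Z$ is a nonzero ideal of $\Z$. Hence there exist a positive integer $n$ and an element $\mu\in\cO_f$ with $n=\lambda\mu$, whence
\[
n\cdot\alpha_K\;=\;\mu\cdot(\lambda\cdot\alpha_K)\;=\;0.
\]
This contradicts the conclusion of the previous paragraph.

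There is no real obstacle here: the only input beyond elementary commutative algebra is the Gross--Zagier formula, which has already been invoked in the discussion preceding the lemma. I might phrase the argument directly via the norm $N_{\Q_f/\Q}(\lambda)\in\Z_{\neq 0}$, but since $\Q_f$ need not be Galois over $\Q$ the clean formulation is via the finiteness of $\cO_f/(\lambda)$, as above. Alternatively, Lemma \ref{hom-lemma} could be applied (with $R=\cO_f$, $M=A_f(K)\otimes_\Z\Q$ viewed as a $\Q_f$-vector space), but the norm/finite-index argument is more direct.
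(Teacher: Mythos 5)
Your proof is correct and takes essentially the same route as the paper: reduce $\cO_f$-torsion to $\Z$-torsion by producing a nonzero rational integer annihilating $\alpha_K$, then invoke the Gross--Zagier nonvanishing already set up in the text. The paper uses the norm $\prod_i a_i = N_{\Q_f/\Q}(a)\in\Z\smallsetminus\{0\}$ directly, and your concern about $\Q_f/\Q$ not being Galois is unfounded there --- $N_{\Q_f/\Q}(a)/a$ is an algebraic integer lying in $\Q_f$, hence in $\cO_f$, so the norm argument and your finiteness-of-$\cO_f/(\lambda)$ argument are the same observation in two guises.
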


\begin{proof} Since $\Q_f=\mathrm{frac}(\cO_f)$, we can equivalently show that (the natural image of) $\alpha_K$ is non-zero in $A_f(K)\otimes_{\cO_f}\!\Q_f$. There is an identification $\Q_f=(\Z\smallsetminus\{0\})^{-1}\cdot\cO_f$ of $\cO_f$-modules, so $\alpha_K$ is non-zero in $A_f(K)\otimes_{\cO_f}\!\Q_f$ because it is non-torsion over $\Z$. \end{proof}

Now we can prove 

\begin{proposition} \label{finite-number-prop}
For all but finitely many prime ideals $\p$ of $\cO_f$ the point $\alpha_K$ does not belong to the $\cO_f$-submodule $\p A_f(K)$.
\end{proposition}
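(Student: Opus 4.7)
The plan is to combine the two auxiliary lemmas just proved (Lemma \ref{hom-lemma} and Lemma \ref{nontorsion-lemma}) with the fact that $\cO_f$ is a Dedekind domain, so that each nonzero element lies in only finitely many prime ideals.

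First I would note that, by Mordell--Weil, $A_f(K)$ is a finitely generated $\Z$-module; since $\cO_f$ acts $K$-rationally on $A_f$, it is therefore also a finitely generated $\cO_f$-module. Lemma \ref{nontorsion-lemma} tells us that $\alpha_K$ is non-torsion as an element of $A_f(K)$ over $\cO_f$. Applying Lemma \ref{hom-lemma} with $R=\cO_f$ and $M=A_f(K)$, we obtain a homomorphism
\[ h\in\Hom_{\cO_f}\bigl(A_f(K),\cO_f\bigr) \]
such that $h(\alpha_K)\neq0$.

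Next I would observe that for any prime ideal $\p$ of $\cO_f$, if $\alpha_K\in\p A_f(K)$ then we can write $\alpha_K=\sum_i\pi_i\cdot a_i$ with $\pi_i\in\p$ and $a_i\in A_f(K)$, whence
\[ h(\alpha_K)=\sum_i\pi_i\,h(a_i)\in\p. \]
Contrapositively, whenever $h(\alpha_K)\notin\p$ one has $\alpha_K\notin\p A_f(K)$.

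Finally, since $\cO_f$ is the ring of integers of a number field (hence a Dedekind domain) and $h(\alpha_K)$ is a nonzero element of $\cO_f$, the principal ideal $\bigl(h(\alpha_K)\bigr)$ admits a unique factorization into a finite product of prime ideals of $\cO_f$. In particular, $h(\alpha_K)\in\p$ for only finitely many prime ideals $\p$ of $\cO_f$, and so $\alpha_K\notin\p A_f(K)$ for all but finitely many $\p$.

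There is no real obstacle here: the statement is a formal consequence of the two preceding lemmas and the finiteness of the set of primes dividing a nonzero element in a Dedekind domain. The only point to keep in mind is that Lemma \ref{hom-lemma} requires the module to be finitely generated, which is automatic from Mordell--Weil together with the finite generation of $\cO_f$ over $\Z$.
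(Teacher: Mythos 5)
Your proof is correct and follows essentially the same route as the paper: Lemma \ref{nontorsion-lemma} plus Lemma \ref{hom-lemma} produce a nonzero value, and the Dedekind property of $\cO_f$ finishes the argument. The only cosmetic difference is that the paper works with the full image ideal $I=\{h(\alpha_K):h\in\Hom_{\cO_f}(A_f(K),\cO_f)\}$ and observes that $\alpha_K\in\p A_f(K)$ forces $\p\supset I$, whereas you fix a single $h$ with $h(\alpha_K)\neq0$ and observe that $\p$ must contain $h(\alpha_K)$; both variants conclude identically.
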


\begin{proof} The group $A_f(K)$ is finitely generated over $\Z$, hence \emph{a fortiori} over $\cO_f$. Let $I$ denote the image of the homomorphism of $\cO_f$-modules
\[ \Hom_{\cO_f}\bigl(A_f(K),\cO_f\bigr)\longrightarrow\cO_f,\quad h\longmapsto h(\alpha_K). \]
By Lemma \ref{nontorsion-lemma}, $\alpha_K$ is not $\cO_f$-torsion, so Lemma \ref{hom-lemma} ensures that the ideal $I$ of $\cO_f$ is not trivial. If $\p$ is a prime ideal of $\cO_f$ such that $\alpha_K\in\p A_f(K)$, then $\p$ contains $I$. Since $\cO_f$ is an oder in the Dedekind domain $\cO_{\Q_f}$ and $I\not=(0)$, this can happen only for finitely many $\p$. \end{proof}

By Proposition \ref{finite-number-prop}, there are only finitely many prime (in fact, maximal) ideals $\p_1,\dots,\p_n$ of $\cO_f$ such that $\alpha_K\in\p_iA_f(K)$ for $i\in\{1,\dots,n\}$. Denote by $p_i$ the residue characteristic of $\p_i$, write $\mathscr P$ for the set of all prime numbers and define
\[ \mathscr S_f\defeq\mathscr P\smallsetminus\{p_1,\dots,p_n\}. \]
As in \S \ref{reduction-subsec}, for every prime $\lambda$ of $\Q_f$ let $\bar\rho_{f,\lambda}$ be the representation over the residue field of $\Q_f$ at $\lambda$ associated with $f$. By \cite[Theorem 2.1, (a)]{ribet2}, $\bar\rho_{f,\lambda}$ is irreducible for all but finitely many $\lambda$. Write $\{\ell_1,\dots,\ell_t\}$ for the set of the residue characteristics of those $\lambda$ for which $\bar\rho_{f,\lambda}$ is reducible and set
\[ \mathscr I_f\defeq\mathscr P\smallsetminus\{\ell_1,\dots,\ell_t\}. \]
Notation being as in \S \ref{choice-subsec}, now define
\[ \Xi_f\defeq\Sigma_f\cap\mathscr S_f\cap\mathscr I_f. \]

\begin{proposition} \label{density-prop}
The set $\Xi_f$ has density $1$; in particular, $\Xi_f$ is infinite.
\end{proposition}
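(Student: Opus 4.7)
The plan is to verify that each of the three sets $\Sigma_f$, $\mathscr S_f$, $\mathscr I_f$ whose intersection cuts out $\Xi_f$ has density $1$ in $\mathscr P$. Since a finite intersection of density-$1$ subsets of $\mathscr P$ has density $1$ (its complement is a finite union of density-$0$ sets), this yields the claim; infinitude is then immediate.

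Two of the three verifications are essentially tautological. By construction $\mathscr S_f=\mathscr P\smallsetminus\{p_1,\dots,p_n\}$ is cofinite thanks to Proposition \ref{finite-number-prop}, and $\mathscr I_f=\mathscr P\smallsetminus\{\ell_1,\dots,\ell_t\}$ is cofinite thanks to the theorem of Ribet invoked just before the definition of $\mathscr I_f$, which guarantees that $\bar\rho_{f,\lambda}$ is reducible for only finitely many primes $\lambda$ of $\Q_f$. Both sets therefore have density $1$.

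The heart of the matter is to show that $\Sigma_f$ has density $1$. Three of the four conditions defining $\Sigma_f$ in \S \ref{choice-subsec} (namely $\ell\nmid 210N$, $\ell\nmid\disc\mathfrak H_2(\Gamma_0(N),\Z)^{\mathrm{red}}$, and $\ell$ not in the finite exclusion set of \cite[Theorem 2.1]{Fischman}) each exclude only finitely many primes, so the question reduces to estimating the density of primes that are \emph{not} ordinary for $f$. Here I would exploit the fact that $N$ is square-free, which by a theorem of Ribet forces $f$ to have no complex multiplication. The Deligne bound $|a_\ell(f)^\sigma|\leq 2\sqrt\ell$ applied to each embedding $\sigma\colon\Q_f\hookrightarrow\C$ gives $|N_{\Q_f/\Q}(a_\ell(f))|\leq (2\sqrt\ell)^{[\Q_f:\Q]}$, so if $\ell\mid a_\ell(f)$ in $\cO_f$ with $a_\ell(f)\neq 0$ then $\ell^{[\Q_f:\Q]}$ divides $N_{\Q_f/\Q}(a_\ell(f))$, forcing $\ell\leq 4$. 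Consequently, for all but finitely many $\ell$, non-ordinariness is equivalent to $a_\ell(f)=0$, and it suffices to show that the set $\{\ell\mid a_\ell(f)=0\}$ has density $0$. This is a standard application of the Chebotarev density theorem: since $f$ is non-CM, the image of $\rho_{f,\lambda}$ is open in $\mathrm{GL}_2(\cO_{f,\lambda})$ for all sufficiently large primes $\lambda$ of $\Q_f$ (and $\bar\rho_{f,\lambda}$ contains a conjugate of $\mathrm{SL}_2(\mathbb F_\lambda)$), so Chebotarev bounds the density of $\ell$ with $a_\ell(f)\equiv 0\pmod\lambda$ by a quantity that tends to $0$ as $|\lambda|\to\infty$; letting $\lambda$ vary, the density of $\ell$ with $a_\ell(f)=0$ must vanish.

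The main conceptual obstacle is this last step, the only one that is not a pure finiteness statement; it rests on the ``large image'' results for $\rho_{f,\lambda}$ in the non-CM case, which are classical for non-CM newforms of weight $2$. Assembling the three pieces, the complement $\mathscr P\smallsetminus\Xi_f$ is the union of three density-$0$ sets, so $\Xi_f$ has density $1$.
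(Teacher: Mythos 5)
Your proof is correct and follows the same outline as the paper's: the paper observes that $\mathscr S_f$ and $\mathscr I_f$ are cofinite, and that $\Sigma_f$ has density $1$ because the set of ordinary primes for the non-CM form $f$ has density $1$ by a theorem of Serre (cited via \cite[Proposition 2.2]{Fischman}). The only difference is that you re-derive Serre's density theorem (reducing non-ordinariness for $\ell\geq 5$ to $a_\ell(f)=0$ via the Deligne bound, then applying Chebotarev with the large-image theorem) rather than citing it, which is a valid elaboration of the same argument rather than a genuinely different route.
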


\begin{proof} By a result of Serre, the set of prime numbers that are ordinary for $f$ has density $1$ (see, \emph{e.g.}, \cite[Proposition 2.2]{Fischman}), therefore $\Sigma_f$ has density $1$. Since $\Sigma_f\smallsetminus\Xi_f$ is finite, $\Xi_f$ has density $1$ as well. \end{proof}

\begin{remark} \label{sigma-rem}
By definition, the set $\Xi_f$ consists of all but finitely many prime numbers that are ordinary for $f$ (\emph{cf.} \S \ref{choice-subsec}).
\end{remark}

\begin{remark}
Fix an odd prime $\ell$ and an integer $M\geq1$ with $\ell\nmid M$. It is a theorem of Jochnowitz (\cite{jochnowitz-TAMS}) that there are only finitely many mod $\ell$ modular
representations of level $\Gamma_1(M)$ (see, \emph{e.g.}, \cite[Proposition 5.1.1]{ColMaz} for the case of level $\Gamma_1(M\ell^r)$ for some $r\in\N$). 
\end{remark}

Pick $p\in\Xi_f$. By \cite[Lemma 2.3]{Fischman}, the prime $p$ is unramified in $\Q_f$, hence there is a prime $\p$ of $\Q_f$ above $p$, which we fix once and for all, such that $f$ is $\p$-ordinary in the sense that $a_p(f)\in\cO_{\Q_f,\p}^\times$ (indeed, if $a_p(f)$ were divisible by all the primes of $\Q_f$ above $p$, then $a_p(f)$ would be divisible by $p$, contradicting the fact that $p$ is an ordinary prime for $f$). Now recall that $\bar\rho_{f,\p}^{ss}$ is \emph{$p$-distinguished} if its restriction to the decomposition group $G_{\Q_p}\defeq\Gal(\bar\Q_p/\Q_p)$ at $p$ can be put in the shape ${\bar\rho^{ss}_{f_\wp}|}_{G_{\Q_p}}=\bigl(\begin{smallmatrix}\varepsilon_1&*\\0&\varepsilon_2\end{smallmatrix}\bigr)$ for characters $\varepsilon_1\not=\varepsilon_2$ (see, \emph{e.g.}, \cite[\S 2]{ghate}).

\begin{lemma} \label{distinguished-lemma}
The representation $\bar\rho_{f,\p}$ is absolutely irreducible and $p$-distinguished.
\end{lemma}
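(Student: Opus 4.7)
The plan is to handle the two assertions separately, drawing on properties that are essentially built into the definition of the set $\Xi_f$ together with a standard structural feature of $\p$-ordinary Galois representations in weight $2$.

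For the absolute irreducibility, I would argue directly from the construction of $\Xi_f$ in \S\ref{heegner-subsec}. By definition $\Xi_f\subset\mathscr I_f$, and $\mathscr I_f$ is precisely the complement of the finite set of residue characteristics at which $\bar\rho_{f,\lambda}$ fails to be irreducible (via Ribet's result used to define $\mathscr I_f$). Thus $\bar\rho_{f,\p}$ is irreducible. Moreover, since $\Xi_f\subset\Sigma_f$ and the definition of $\Sigma_f$ excludes all prime divisors of $210N$, we have $p\geq 11$; in particular $p\neq 2$. Remark \ref{absolutely-rem} then upgrades irreducibility to absolute irreducibility.

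For the $p$-distinguishedness, I would invoke the well-known structure of the local representation at an ordinary prime. Because $f$ is $\p$-ordinary of weight $k=2$ and trivial character, the theorem of Mazur and Wiles (see, e.g., \cite[\S 2]{ghate}) asserts that the restriction $\bar\rho^{ss}_{f,\p}\big|_{G_{\Q_p}}$ is equivalent to an upper triangular representation
\[
\bar\rho^{ss}_{f,\p}\big|_{G_{\Q_p}}\sim\begin{pmatrix}\varepsilon_1 & * \\ 0 & \varepsilon_2\end{pmatrix},
\]
where $\varepsilon_1$ is the unramified character sending a (geometric or arithmetic, depending on convention) Frobenius to the reduction of the $\p$-adic unit root of the Hecke polynomial $X^2-a_p(f)X+p$, and where $\varepsilon_1\varepsilon_2=\bar\varepsilon_{\cyc}^{k-1}=\bar\varepsilon_{\cyc}$. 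In particular, on the inertia subgroup $I_p\subset G_{\Q_p}$ one has $\varepsilon_1|_{I_p}=1$ and $\varepsilon_2|_{I_p}=\bar\varepsilon_{\cyc}|_{I_p}$.

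To conclude, I would observe that $\bar\varepsilon_{\cyc}|_{I_p}:I_p\twoheadrightarrow\F_p^\times$ is nontrivial as soon as $p>2$, which is ensured (with considerable room to spare) by $p\in\Xi_f\subset\Sigma_f$ and the condition $p\nmid 210N$. Hence $\varepsilon_1|_{I_p}\neq\varepsilon_2|_{I_p}$, so a fortiori $\varepsilon_1\neq\varepsilon_2$ as characters of $G_{\Q_p}$, which is the $p$-distinguished condition. The main (and only slightly delicate) point is to make sure that the filtration provided by Mazur--Wiles indeed yields the diagonal characters with the claimed behavior under $\bar\varepsilon_{\cyc}$; once this is set up, the distinctness is automatic in weight $2$ because the nontriviality of the mod $p$ cyclotomic character on inertia costs essentially nothing beyond the hypothesis $p\neq 2$.
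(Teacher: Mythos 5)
Your proof is correct and follows essentially the same two-step route as the paper: irreducibility is built into the choice of $p\in\Xi_f\subset\mathscr I_f$, upgraded to absolute irreducibility via Remark \ref{absolutely-rem} (using $p\neq 2$), and $p$-distinguishedness comes from the Mazur--Wiles local structure of a $\p$-ordinary representation, with the unramified diagonal character and the cyclotomic twist forced to differ on inertia. The only cosmetic difference is the placement of the unramified character (top-left versus bottom-right), a matter of the chosen normalization, which does not affect the conclusion.
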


\begin{proof} In light of Remark \ref{absolutely-rem}, $\bar\rho_{f,\p}$ is absolutely irreducible by our choice of $p$. Furthermore, since $f$ is $\p$-ordinary, a result of Mazur--Wiles and of Wiles (\cite[Theorem 2.1.4]{Wiles-Ordinary}) ensures that the restriction of $\rho_{f,\p}$ to $G_{\Q_p}$ has the form
\begin{equation} \label{local-eq}
{\rho_{f,\p}|}_{G_{\Q_p}}\simeq\begin{pmatrix}\delta_1&\nu\\0&\delta_2\end{pmatrix}, 
\end{equation}
where $\delta_i:G_{\Q_p}\rightarrow\cO_{\Q_f,\p}^\times$ for $i=1,2$ are characters with $\delta_2$ unramified and $\nu:G_{\Q_p}\rightarrow\cO_{\Q_f,\p}$ is a continuous map. More explicitly, $\delta_1=\xi\cdot\varepsilon_{\cyc}$ where $\xi$ is unramified and $\varepsilon_{\cyc}$ is, as in \S \ref{reduction-subsec}, the $p$-adic cyclotomic character (see, \emph{e.g.}, \cite[\S 2.1]{GV}). If, as usual, we denote reduction modulo $\p$ with a bar, then \eqref{local-eq} implies that $\bar\rho_{f,\p}$ is of the form $\Big(\begin{smallmatrix}\bar\delta_1&\ast\\0&\bar\delta_2\end{smallmatrix}\Big)$. Now let $I_p\subset G_{\Q_p}$ be the inertia subgroup. On the one hand, ${\bar\delta_2|}_{I_p}$ is trivial, as $\delta_2$ is unramified. On the other hand, $\xi$ is unramified, hence ${\bar\delta_1|}_{I_p}={\bar\varepsilon_{\cyc}|}_{I_p}$, which is non-trivial because $\bar\varepsilon_{\cyc}$ is ramified. It follows that $\bar\delta_1\not=\bar\delta_2$, so $\bar\rho_{f,\p}$ is $p$-distinguished. See, \emph{e.g.}, \cite[\S 2.3]{LV-NYJM} for more details. \end{proof}

As before, with conventions as in \S \ref{cohomological-subsec}, let $\Ta_\p(A_f)$ be the $\p$-adic Tate module of $A_f$; furthermore, write $A_f[\p]$ for the $\p$-torsion $\cO_f$-submodule of $A_f(\bar\Q)$ and for any number field $L$ let 
\begin{equation} \label{pi-f-L-eq}
\pi_{f,L}:H^1\bigl(L,\Ta_\p(A_f)\bigr)\longrightarrow H^1(L,A_f[\p]) 
\end{equation}
be the map induced by the surjection $\Ta_\p(A_f)\twoheadrightarrow A_f[\p]$. There are Kummer maps
\begin{equation} \label{pi-f-L-eq2}
\delta_{f,L}:A_f(L)\longrightarrow H^1\bigl(L,\Ta_\p(A_f)\bigr),\quad\pi_{f,L}\circ\delta_{f,L}: A_f(L)\longrightarrow H^1(L,A_f[\p]) 
\end{equation}
in Galois cohomology (see, \emph{e.g.}, \cite[Appendix A.1]{GP}). In turn, the second map gives an injection
\begin{equation} \label{delta-bar-eq}
\bar\delta_{f,L}:A_f(L)\big/\p A_f(L)\longmono H^1(L,A_f[\p]). 
\end{equation}
Thanks to our choice of $p$, the image $\bar\alpha_K$ of $\alpha_K$ in $A_f(K)/\p A_f(K)$ is non-zero, hence 
\begin{equation} \label{nonzero-eq}
\bar\delta_{f,K}(\bar\alpha_K)\not=0. 
\end{equation}
From now on let us choose as $\bar\Q_p$ a fixed algebraic closure $\bar\Q_{f,\p}$ of $\Q_{f,\p}$ and denote by $\bar\Q$ the algebraic closure of $\Q$ inside $\bar\Q_p$. Notation being as in \S \ref{notation-subsec}, the map $\iota_p$ is nothing but the tautological inclusion of $\bar\Q$ into $\bar\Q_p$, which gives a prime ideal $\fP$ of $\bar\Z$ such that $\fP\cap\cO_f=\p$; the ideal $\fP$ should be viewed as a distinguished incarnation of the prime ideal of residue characteristic $p$ from \S \ref{notation-subsec}. By construction, $f$ is $\fP$-ordinary in the sense of \S \ref{stabilization-subsec}. As in \S \ref{hida-subsec}, let $\hf\in\mathcal R[\![q]\!]$ be the $p$-adic Hida family passing through $f$. With notation as in \S \ref{hida-subsec}, we take $\cO\defeq\cO_{\Q_f,\p}$, hence $\Q_{f,\p}$ is a subfield of $\mathcal F_\wp$ for every arithmetic prime $\wp$ of $\mathcal R$. For each $\wp$, we can also choose the embedding $\iota_\wp:\mathcal F_\wp\hookrightarrow\bar\Q_p$ to be $\Q_{f,\p}$-linear. As explained in \S \ref{root-numbers-subsec}, we view $f_\wp$ and $f_\wp^\flat$ as classical modular forms with complex Fourier coefficients. In line with the notation introduced in \S \ref{cohomological-subsec}, we write $\Q_{f_\wp^\flat}$ for the Hecke field of $f_\wp^\flat$ and $\cO_{f_\wp^\flat}$ for its ring of integers. As in \S \ref{notation-subsec}, we fix an embedding $\iota_{\Q_{f_\wp^\flat}}:\Q_{f_\wp^\flat}\hookrightarrow\bar\Q$, which induces a prime ideal $\fP_{f_\wp^\flat}$ of $\cO_{f_\wp^\flat}$. Finally, as a convenient shorthand, in the following we denote $\Q_{f_\wp^\flat,\fP_{f_\wp^\flat}}$ by $\Q_{f_\wp^\flat,\fP}$ and $\cO_{f_\wp^\flat,\fP_{f_\wp^\flat}}$ by $\cO_{f_\wp^\flat,\fP}$. 

In the following statement, $\wp$ is an arithmetic prime of $\mathcal R$.

\begin{proposition} \label{trivial-prop}
If $\wp$ has trivial character and weight $k_\wp\equiv2\pmod{p-1}$, then $f_\wp$ has trivial character.
\end{proposition}

\begin{proof} Let $\omega:\F_p^\times\rightarrow\Bmu_{p-1}$ be the Teichm\"uller character, where $\Bmu_{p-1}\subset\bar\Q_p^\times$ is the group of $(p-1)$-st roots of unity. The character of $f_\wp$ is $\psi_\wp\omega^{2-k_\wp}$, and the proposition follows. \end{proof}

The main results of this paper will concern arithmetic primes $\wp$ of trivial character and weight $k_\wp\equiv2\pmod{2(p-1)}$: from here on, we will use Proposition \ref{trivial-prop} without further notice.

\subsection{Critical twist and residual representations} \label{residual-subsec}

As in \S \ref{hida-subsec}, let $\T$ be the big Galois representation associated with the Hida family $\hf$ introduced at the end of \S \ref{heegner-subsec}. Rather than in $\T$ itself, we will be interested in a suitable twist $\T^\dagger$ of $\T$ called the \emph{critical twist}, whose definition can be found, \emph{e.g.}, in \cite[Definition 2.1.3]{Howard-Inv}. With notation and terminology as in \cite{Howard-Inv}, in our case one has $k=2$ and $j=0$, hence only the ``wild'' part of the critical character plays a role. Recall from \S \ref{hida-subsec} that if $\wp$ is an arithmetic prime of $\mathcal R$, then the field $\mathcal F_\wp=\mathcal R_\wp/\wp\mathcal R_\wp=\mathrm{frac}(\mathcal R/\wp)$ is a finite extension of $\Q_p$ that we embedded into $\bar\Q_p$. Moreover, the integral domain $\mathcal R/\wp$ is the valuation ring of $\mathcal F_\wp$. The twist $\T^\dagger$ has the property that for every arithmetic prime $\wp$ of weight $k_\wp\equiv2\pmod{2(p-1)}$ and trivial character the representation $\T^\dagger_\wp\big/\wp\T^\dagger_\wp=\T^\dagger\otimes_{\mathcal R}\mathcal F_\wp$ of $G_\Q$, where $\T^\dagger_\wp$ is the localization of $\T^\dagger$ at $\wp$, is equivalent to $V^\dagger_{f_\wp}$ after a finite base change (see, \emph{e.g.}, \cite[(3.2.4)]{NP}). As a consequence, there are specialization maps
\begin{equation} \label{specialization-eq}
\T^\dagger\longepi\T^\dagger/\wp\T^\dagger\longrightarrow\T^\dagger_\wp\big/\wp\T^\dagger_\wp\simeq V^\dagger_{f_\wp}, 
\end{equation}
which in turn induce specialization maps in cohomology. Notice that, in particular, $\T^\dagger\simeq\T$ as $\mathcal R$-modules. Summing up, $\T$ and $\T^\dagger$ enjoy the following interpolation properties, up to a finite base change:
\begin{itemize}
\item the specialization of $\T$ at an arithmetic prime $\wp$ is equivalent to $V_{f_\wp}^*$;
\item if $k_\wp\equiv2\pmod{2(p-1)}$, then the specialization of $\T^\dagger$ at $\wp$ is equivalent to $V_{f_\wp}^\dagger$.
\end{itemize}
Denote by $\m_{\mathcal R}$ the maximal ideal of $\mathcal R$ and let $\F_{\mathcal R}\defeq\mathcal R/\m_{\mathcal R}$ be the residue field of $\mathcal R$. Define
\[ \bar\T\defeq\T/\mathfrak m_\mathcal R\T=\T\otimes_{\mathcal R}\F_{\mathcal R}, \]
which is a two-dimensional representation of $G_\Q$ over $\F_{\mathcal R}$. As above, let $\wp$ be an arithmetic prime of weight $k_\wp\equiv2\pmod{2(p-1)}$ and trivial character; recall from \S \ref{reduction-subsec} the reduced representations
\[ \bar\rho_{f_\wp}:G_\Q\longrightarrow\GL(\bar T_{f_\wp}),\quad\bar\rho^\dagger_{f_\wp}:G_\Q\longrightarrow\GL\bigl(\bar T^\dagger_{f_\wp}\bigr) \]
and their semisimplifications
\[ \bar\rho^{\,ss}_{f_\wp}:G_\Q\longrightarrow\GL(\bar T^{\,ss}_{f_\wp}),\quad\bar\rho^{\dagger,ss}_{f_\wp}:G_\Q\longrightarrow\GL\bigl(\bar T^{\dagger,ss}_{f_\wp}\bigr). \]
It turns out that if $\wp$ and $\wp'$ are two arithmetic primes, then 
\begin{equation} \label{semi-simple-eq}
\bar\rho^{\,ss}_{f_\wp}\simeq\bar\rho^{\,ss}_{f_{\wp'}} 
\end{equation}
after a finite base change (\cite[p. 251]{hida86a}). If $\bar\rho_{f_\wp}$ (equivalently, $\bar\rho^{\,ss}_{f_\wp}$) is irreducible and $p$-distinguished for one (hence for every) arithmetic prime $\wp$, then 
\begin{itemize}
\item $\T$ is free of rank $2$ over $\mathcal R$ (\cite[Th\'eor\`eme 7]{mazur-tilouine});
\item $\bar\T\simeq\bar\rho_{f_\wp}$ after a finite base change for all such $\wp$ (see, \emph{e.g.}, \cite[Proposition 5.4]{LV-MM}).
\end{itemize}

\begin{remark}
The condition that $\bar\rho^{ss}_{f_\wp}$ be $p$-distinguished is apparently not imposed in \cite{mazur-tilouine}, but it turns out to be necessary for the two results above to hold (\emph{cf.} \cite[p. 379]{Fouquet}).
\end{remark}

Notice that, in light of \eqref{dagger-twist-eq}, property \eqref{semi-simple-eq} no longer holds unconditionally once $\bar\rho^{\,ss}_{f_\wp}$ and $\bar\rho^{\,ss}_{f_{\wp'}}$ have been replaced by $\bar\rho^{\dagger,ss}_{f_\wp}$ and $\bar\rho^{\dagger,ss}_{f_{\wp'}}$, respectively. However, by Remark \ref{twist-rem}, if $\wp$ and $\wp'$ are two arithmetic primes such that $k_\wp\equiv k_{\wp'}\equiv2\pmod{2(p-1)}$, then
\[ \bar\rho^{\dagger,ss}_{f_\wp}\simeq\bar\rho^{\dagger,ss}_{f_{\wp'}} \] 
after a finite base change.

Finally, set $\bar\T^\dagger\defeq\T^\dagger/\m_\mathcal R\T^\dagger$. An easy computation shows that the critical character in \cite[Definition 2.1.3]{Howard-Inv} is trivial modulo $\m_\mathcal R$, hence there is a canonical identification 
\begin{equation} \label{T-T-dagger-eq}
\bar\T=\bar\T^\dagger
\end{equation}
of representations of $G_\Q$ over $\F_\mathcal R$. We write $\pi_\mathcal R:\T^\dagger\twoheadrightarrow\bar\T$ for the surjection determined by \eqref{T-T-dagger-eq} and
\begin{equation} \label{pi-R-L-eq}
\pi_{\mathcal R,L}:H^1\bigl(L,\T^\dagger\bigr)\longrightarrow H^1(L,\bar\T)
\end{equation} 
for the map in cohomology induced functorially by $\pi_{\mathcal R}$, where $L$ is a given number field.

\subsection{On the image of $\Lambda$-adic Galois representations} \label{lambda-subsec}

Let $f$, $p$ and $\hf$ be as in \S \ref{choice-subsec} and \S \ref{heegner-subsec}. Our aim is to show that (the self-dual twists of) the $\fP$-adic representations attached to suitable specializations of $\hf$ have non-solvable images, and that the same is true of their reductions. As will be apparent, a crucial role in our arguments is played by results of Fischman on the image of $\Lambda$-adic Galois representations (\cite{Fischman}).

By \cite[Theorem 3.1]{Fischman}, $\mathcal R$ is a power series ring in one variable; more precisely, there is a finite extension $\mathscr O$ of $\Z_p$ such that $\mathcal R\simeq\mathscr O[\![X]\!]$. With notation as in \eqref{hida-rep-eq}, fix a basis of $\T$ over $\mathcal R$ and let
\begin{equation} \label{rho-infty-eq}
\rho_{\hf}:G_\Q\longrightarrow\GL_2(\mathcal R)\simeq\GL_2(\mathscr O[\![X]\!]) 
\end{equation}
be Hida's big Galois representation attached to $\hf$. Since we are interested in the size of the image of $\rho_{\hf}$, we can certainly view $\rho_{\hf}$ as taking values in $\GL_2(\mathscr O[\![X]\!])$.

\begin{theorem}[Fischman] \label{fischman-thm}
The image of $\rho_{\hf}$ contains $\SL_2(\mathscr O[\![X]\!])$.
\end{theorem}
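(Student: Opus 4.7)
The approach is essentially to invoke Fischman's main theorem on $\Lambda$-adic Galois representations from \cite{Fischman} and to verify that all its hypotheses are built into our choice of $p \in \Xi_f$. The definition of $\Sigma_f$ in \S\ref{choice-subsec} was engineered precisely to exclude the finite set of exceptional primes from \cite[Theorem 2.1]{Fischman}, and $\Xi_f \subseteq \Sigma_f$, so this point is free. The plan is then to check, one at a time, that the other standing conditions Fischman needs are in force.

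First I would record the relevant hypotheses: (i) $f$ has no complex multiplication, which holds because $N$ is square-free (see \S\ref{choice-subsec}); (ii) $p$ is ordinary for $f$ and unramified in $\Q_f$, which holds because $p \in \Sigma_f$ (and, by \cite[Lemma 2.3]{Fischman}, the non-ramification follows from membership in $\Sigma_f$); (iii) $p \nmid 210 N$ and $p$ does not divide the discriminant of the reduced Hecke algebra $\mathfrak H_2(\Gamma_0(N),\Z)^{\mathrm{red}}$, again from the definition of $\Sigma_f$; (iv) the residual representation $\bar\rho_{f,\p}$ is absolutely irreducible and $p$-distinguished, by Lemma \ref{distinguished-lemma}; (v) the ring $\mathcal R$ of $\hf$ is already identified with $\mathscr O[\![X]\!]$ for a finite extension $\mathscr O/\Z_p$ via \cite[Theorem 3.1]{Fischman}, so the statement of Theorem \ref{fischman-thm} is sensible.

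Next I would feed this into Fischman's output. Her result shows that, under (i)--(v), the image of $\bar\rho_{f,\p}$ is as large as possible, containing $\SL_2$ of the residue field, and that this ``openness'' is preserved when one lifts to the full Hida deformation; the lifting step uses that $\mathscr O[\![X]\!]$ is a regular complete local ring, together with a Pink/Boston-style analysis of pro-$p$ closed subgroups of $\GL_2(\mathscr O[\![X]\!])$ whose reduction modulo the maximal ideal contains $\SL_2$ of the residue field. Combining the base-level big image statement with this lifting yields exactly the conclusion that the image of $\rho_{\hf}$ contains $\SL_2(\mathscr O[\![X]\!])$.

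The main obstacle is bookkeeping rather than mathematics: one has to confirm that the specific normalization of $\rho_{\hf}$ chosen in \eqref{rho-infty-eq} (cohomological vs.\ homological, critical twist or not) matches the one used in \cite{Fischman}, and that the finite base change $\mathscr O$ appearing here is compatible with hers. A minor subsidiary point is checking that the $p$-distinguished hypothesis, which is needed to guarantee freeness of $\T$ of rank $2$ over $\mathcal R$ (as noted after \eqref{semi-simple-eq}), is automatic in Fischman's framework --- this is ensured by Lemma \ref{distinguished-lemma}. Once these compatibilities are in place, the statement is essentially a direct translation of Fischman's main theorem to the notation of this paper.
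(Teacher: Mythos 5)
Your outline of the strategy is correct as far as it goes: both you and the paper deduce the statement by invoking Fischman's main theorem from \cite{Fischman} after verifying that the standing hypotheses built into the definition of $\Xi_f$ are satisfied. However, there is a genuine gap in your argument that the paper's proof addresses and you do not.

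Fischman's Theorem 4.8 does \emph{not} directly conclude that the image of $\rho_{\hf}$ contains $\SL_2(\mathscr O[\![X]\!])$. What it gives in general is that the image contains $\SL_2(R_F)$, where $R_F=\boldsymbol{\mathrm I}^{\Gamma_F}$ is the subring of $\mathcal R$ fixed by the group $\Gamma_F$ of conjugate self-twists of the Hida family. If $\Gamma_F$ is non-trivial, $R_F$ is a \emph{proper} subring and one gets a strictly weaker conclusion, regardless of how large the residual image is. Your ``Pink/Boston-style lifting'' picture, in which residual bigness automatically propagates to $\SL_2$ of the full ring, is therefore not quite right: the coefficient ring that controls the lift is $R_F$, not $\mathcal R$, and equality between the two is not automatic.

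The paper closes this gap by observing that since $N$ is square-free, $f$ has \emph{no inner twists} --- a strictly stronger fact than having no complex multiplication (CM is the special case of an inner twist where the accompanying automorphism of the Hecke field is the identity). This makes the group $\Gamma_f$ of inner twists of $f$ trivial, and then \cite[Proposition 3.12]{Fischman} forces $\Gamma_F$ to be trivial as well, yielding $R_F=\mathcal R\simeq\mathscr O[\![X]\!]$. Only at that point does Theorem 4.8 deliver the desired conclusion. You record ``no CM'' as hypothesis (i) but never touch the inner-twist group $\Gamma_f$ or the self-twist group $\Gamma_F$ of the family, so your argument as written would only give containment of $\SL_2$ of a potentially proper subring. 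To repair it, you should replace the appeal to ``no CM'' with the sharper statement that $N$ square-free implies $f$ has no inner twists (Ribet), then run the chain $\Gamma_f=\{1\}\Rightarrow\Gamma_F=\{1\}\Rightarrow R_F=\mathcal R$ before citing Theorem 4.8.
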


\begin{proof} We freely use notation from \cite{Fischman}, bearing in mind that the counterpart of $\mathcal R$ is denoted by $\boldsymbol{\mathrm I}$ in \cite{Fischman}. Since, by assumption, $N$ is square-free, the form $f$ has no inner twists (see, \emph{e.g.}, \cite[(3.9)]{ribet-twists}), that is, $\Gamma_f$ is trivial. By \cite[Proposition 3.12]{Fischman}, $\Gamma_F$ is also trivial, which implies that $R_F=\boldsymbol{\mathrm I}^{\Gamma_F}=\mathcal R\simeq\mathscr O[\![X]\!]$. The theorem follows immediately from \cite[Theorem 4.8]{Fischman}. \end{proof}

Let $\wp$ be an arithmetic prime of $\mathcal R$ of weight $k_\wp\equiv2\pmod{2(p-1)}$ and trivial character. Let $\wp_2$ be the arithmetic prime of $\mathcal R$ such that $f_{\wp_2}=f^\sharp$. By our choice of $f$ if $k_\wp=2$ and by Remark \ref{old-rem} if $k_\wp>2$, there is a newform $f^\flat_\wp$ of level $N$, weight $k_\wp$ and trivial character such that $(f^\flat_\wp{)}^\sharp=f_\wp$. Of course, $f_{\wp_2}^\flat=f$. Suppose that $k_\wp>2$ and, to lighten our notation, in this subsection set $g\defeq f_\wp^\flat$.

Still regarding $\rho_{\hf}$ as $\GL_2(\mathscr O[\![X]\!])$-valued, write
\begin{equation} \label{specialization-infty-eq2}
\rho_{\hf,\wp}:G_\Q\longrightarrow\GL_2(\mathscr O)\subset\GL_2(\mathscr F) 
\end{equation}
for the specialization of $\rho_{\hf}$ at $\wp$, where $\mathscr F$ is the quotient field of $\mathscr O$. This specialization map is explicitly given by replacing $X$ with $(1+p)^{k_\wp-2}-1$ in \eqref{rho-infty-eq} (\cite[Theorem II]{hida86b}). Denote by $\mathscr M$ the maximal ideal of $\mathscr O$, let $\F_{\mathscr O}\defeq\mathscr O/\mathscr M$ be the residue field of $\mathscr O$ and let
\begin{equation} \label{specialization-infty-eq}
\bar\rho_{\hf,\wp}:G_\Q\longrightarrow\GL_2(\F_{\mathscr O}) 
\end{equation}
be the reduction of $\rho_{\hf,\wp}$ modulo $\mathscr M$.

It is convenient to give 

\begin{definition}
A group representation over a commutative ring is \emph{of solvable} (respectively, \emph{non-solvable}) \emph{type} if its image is a solvable (respectively, non-solvable) group. 
\end{definition}

We want to show that the representations $\rho_g^\dagger=\rho_{g,\fP}^\dagger$ and $\bar\rho_g^\dagger$ are of non-solvable type. First of all, we prove that $\rho_{\hf,\wp}$ and $\bar\rho_{\hf,\wp}$ as in \eqref{specialization-infty-eq2} and \eqref{specialization-infty-eq} have this property. 

\begin{lemma} \label{sl-lemma}
The groups $\SL_2(\mathscr O)$ and $\SL_2(\F_{\mathscr O})$ are not solvable.
\end{lemma}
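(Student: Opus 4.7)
The plan is to reduce both statements to the classical simplicity of $\PSL_2$ over finite fields. By construction of the set $\Sigma_f \supseteq \Xi_f$ in \S\ref{choice-subsec}, the prime $p$ satisfies $p \nmid 210N$, hence $p \geq 11$. Since $\F_{\mathscr O}$ is a finite extension of $\F_p$, we therefore have $|\F_{\mathscr O}| = p^{[\F_{\mathscr O}:\F_p]} \geq 11 > 3$, which is the regime in which $\PSL_2$ is simple non-abelian.

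First I would handle $\SL_2(\F_{\mathscr O})$. The quotient $\PSL_2(\F_{\mathscr O}) = \SL_2(\F_{\mathscr O})/\{\pm I\}$ is a simple non-abelian group whenever $|\F_{\mathscr O}| \geq 4$, a classical fact going back to Jordan and Dickson. Any non-trivial simple non-abelian group is non-solvable, so $\PSL_2(\F_{\mathscr O})$ is not solvable. Since solvability is closed under taking quotients (the central extension $\SL_2 \twoheadrightarrow \PSL_2$ has abelian kernel $\{\pm I\}$, but that is immaterial: if $\SL_2(\F_{\mathscr O})$ were solvable, every quotient would be), $\SL_2(\F_{\mathscr O})$ is not solvable either.

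For $\SL_2(\mathscr O)$ I would argue that the reduction-mod-$\mathscr M$ map
\[
\SL_2(\mathscr O) \longrightarrow \SL_2(\F_{\mathscr O})
\]
is surjective: any class in $\SL_2(\F_{\mathscr O})$ is a product of elementary matrices $\smallmat{1}{\bar a}{0}{1}$ and $\smallmat{1}{0}{\bar b}{1}$, and each such elementary matrix manifestly lifts to an element of $\SL_2(\mathscr O)$ by choosing any lifts $a, b \in \mathscr O$ of $\bar a, \bar b$. (Alternatively, one can invoke smoothness of the group scheme $\SL_2$ over $\Z$ together with Hensel's lemma for the complete local ring $\mathscr O$.) Composing with the projection to $\PSL_2(\F_{\mathscr O})$ exhibits the simple non-abelian group $\PSL_2(\F_{\mathscr O})$ as a quotient of $\SL_2(\mathscr O)$, so $\SL_2(\mathscr O)$ is not solvable.

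There is no real obstacle here beyond checking that the characteristic is large enough to avoid the degenerate cases $|\F_{\mathscr O}| \in \{2,3\}$ in which $\PSL_2$ fails to be simple; this is exactly what the condition $p \nmid 210N$ in the definition of $\Sigma_f$ guarantees.
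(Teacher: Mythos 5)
Your proof is correct and rests on the same essential idea as the paper's, namely the classical simplicity of $\PSL_2$ over a finite field of size at least $4$. The only genuine difference is in the treatment of $\SL_2(\mathscr O)$: the paper works with $\F_p$ rather than $\F_{\mathscr O}$, deduces non-solvability of $\SL_2(\F_{\mathscr O})$ from the \emph{subgroup} $\SL_2(\F_p)$, and then handles $\SL_2(\mathscr O)$ by embedding $\SL_2(\Z)$ as a subgroup and invoking the surjectivity of $\SL_2(\Z)\to\PSL_2(\F_p)$ (Shimura, Lemma 1.38); you instead establish surjectivity of the reduction map $\SL_2(\mathscr O)\to\SL_2(\F_{\mathscr O})$ (by lifting elementary matrices, or by smoothness of $\SL_2$ plus Hensel) and present $\PSL_2(\F_{\mathscr O})$ as a quotient of both groups. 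Your route is slightly more uniform, treating both groups by the same surjection-onto-a-simple-group argument and avoiding the detour through $\SL_2(\Z)$; the paper's version avoids having to justify surjectivity of reduction, at the modest cost of an extra reference. Both are complete and the arithmetic side condition ($p\nmid 210N$ in $\Sigma_f$, which you correctly invoke and which the paper uses as $p\geq5$) ensures $\PSL_2$ over the relevant residue fields is indeed simple non-abelian.
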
            

\begin{proof} Since $p\geq5$, the non-abelian group $\PSL_2(\F_p)$ is simple (\cite[Chapter XIII, Theorem 8.4]{lang-algebra}), hence non-solvable. It follows that $\SL_2(\F_p)$, of which $\PSL_2(\F_p)$ is a quotient, is not solvable. On the other hand, $\SL_2(\F_p)$ is a subgroup of $\SL_2(\F_{\mathscr O})$, which implies that $\SL_2(\F_{\mathscr O})$ is not solvable. Finally, the natural map from $\SL_2(\Z)$ to $\PSL_2(\F_p)$ is surjective (\cite[Lemma 1.38]{shimura}), therefore $\SL_2(\Z)$ is not solvable. Since $\SL_2(\Z)$ is a subgroup of $\SL_2(\mathscr O)$, we conclude that $\SL_2(\mathscr O)$ is not solvable as well. \end{proof}

\begin{proposition} \label{non-solvable-rho-prop}
The representations $\rho_{\hf,\wp}$ and $\bar\rho_{\hf,\wp}$ are of non-solvable type.
\end{proposition}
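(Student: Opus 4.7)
The plan is to leverage Fischman's Theorem \ref{fischman-thm}---which asserts that the image of $\rho_{\hf}$ contains $\SL_2(\mathscr O[\![X]\!])$---together with the non-solvability results recorded in Lemma \ref{sl-lemma}. The strategy is simply to push these large subgroups forward, first through specialization at $\wp$ and then through reduction modulo $\mathscr M$.

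First, I would note that, by construction, $\rho_{\hf,\wp}$ is obtained from $\rho_{\hf}$ by postcomposition with the entry-wise application of the continuous $\mathscr O$-algebra homomorphism
\[ \phi_\wp:\mathscr O[\![X]\!]\longrightarrow\mathscr O,\qquad X\longmapsto(1+p)^{k_\wp-2}-1. \]
This evaluation is well defined because $(1+p)^{k_\wp-2}-1\in p\mathscr O\subset\mathscr M$. The induced group homomorphism $\GL_2(\mathscr O[\![X]\!])\to\GL_2(\mathscr O)$ carries $\SL_2(\mathscr O[\![X]\!])$ \emph{onto} $\SL_2(\mathscr O)$, since any $A\in\SL_2(\mathscr O)$ sits in $\SL_2(\mathscr O[\![X]\!])$ as a constant matrix and is fixed by $\phi_\wp$. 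Combining this surjectivity with Theorem \ref{fischman-thm}, the image of $\rho_{\hf,\wp}$ contains $\SL_2(\mathscr O)$, which by Lemma \ref{sl-lemma} is not solvable; hence $\rho_{\hf,\wp}$ is of non-solvable type.

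For $\bar\rho_{\hf,\wp}$, I would repeat the argument one step further down. Entry-wise reduction modulo $\mathscr M$ induces a surjective group homomorphism $\SL_2(\mathscr O)\twoheadrightarrow\SL_2(\F_{\mathscr O})$ (for instance, by smoothness of the group scheme $\SL_2$ over $\mathscr O$, or, more elementarily, by lifting a given matrix entry-by-entry to $\mathscr O$ and rescaling to correct the determinant using $\mathscr O^\times\twoheadrightarrow\F_{\mathscr O}^\times$). Therefore the image of $\bar\rho_{\hf,\wp}$ contains $\SL_2(\F_{\mathscr O})$, and a second application of Lemma \ref{sl-lemma} yields non-solvability.

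No serious obstacle is anticipated: once Fischman's theorem supplies the copy of $\SL_2(\mathscr O[\![X]\!])$ inside the image of $\rho_{\hf}$, all that remains are the straightforward surjectivities of $\phi_\wp$ and of mod-$\mathscr M$ reduction at the level of $\SL_2$. The closest thing to a subtle point is verifying that the power-series evaluation $X\mapsto(1+p)^{k_\wp-2}-1$ actually defines a continuous homomorphism on $\mathscr O[\![X]\!]$, which is immediate from the containment $(1+p)^{k_\wp-2}-1\in\mathscr M$.
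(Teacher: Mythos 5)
Your proof is correct and follows exactly the route the paper takes: the paper's argument is the same one-line observation that, by Theorem \ref{fischman-thm} and Lemma \ref{sl-lemma}, the images of $\rho_{\hf,\wp}$ and $\bar\rho_{\hf,\wp}$ contain $\SL_2(\mathscr O)$ and $\SL_2(\F_{\mathscr O})$ respectively, hence are not solvable. You have merely filled in the (straightforward) surjectivity of evaluation $X\mapsto(1+p)^{k_\wp-2}-1$ and of reduction modulo $\mathscr M$ on $\SL_2$, which the paper treats as immediate.
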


\begin{proof} Immediate from Theorem \ref{fischman-thm} and Lemma \ref{sl-lemma}, as the image of $\rho_{\hf,\wp}$ (respectively, $\bar\rho_{\hf,\wp}$) contains the non-solvable group $\SL_2(\mathscr O)$ (respectively, $\SL_2(\F_{\mathscr O})$) and hence cannot be solvable. \end{proof}

We need three more elementary lemmas in representation theory.

\begin{lemma} \label{representations-lemma}
Let $\sigma:G\rightarrow\GL(V)$ and $\tau:G\rightarrow\GL(W)$ be representations of a group $G$. If $\sigma$ and $\tau$ are equivalent, then the groups $\sigma(G)$ and $\tau(G)$ are isomorphic.
\end{lemma}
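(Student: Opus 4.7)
The plan is to unpack the definition of equivalence of representations and then observe that conjugation by the intertwining isomorphism provides the desired group isomorphism between the images. By hypothesis, $\sigma$ and $\tau$ being equivalent means that there exists a linear isomorphism $\phi:V\xrightarrow{\sim}W$ such that
\[ \tau(g)=\phi\circ\sigma(g)\circ\phi^{-1}\qquad\text{for all }g\in G. \]
The map $\GL(V)\to\GL(W)$, $\alpha\mapsto\phi\circ\alpha\circ\phi^{-1}$, is a group isomorphism (its inverse being conjugation by $\phi^{-1}$).

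First I would restrict this conjugation isomorphism to the subgroup $\sigma(G)\subset\GL(V)$. The above displayed identity shows that the image of this restriction is precisely $\tau(G)\subset\GL(W)$, since every element of $\tau(G)$ is by definition of the form $\tau(g)=\phi\sigma(g)\phi^{-1}$ and, conversely, every conjugate $\phi\sigma(g)\phi^{-1}$ equals $\tau(g)$. As the restriction of an injective group homomorphism to a subgroup is injective, and we have just noted it is surjective onto $\tau(G)$, we obtain the desired group isomorphism $\sigma(G)\simeq\tau(G)$.

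There is essentially no obstacle here: the lemma is a purely formal consequence of the definitions and the fact that inner automorphisms of $\GL(W)$ are group automorphisms. The only thing worth being a little careful about is the distinction between equivalence as representations (which is the conjugacy relation above) and mere isomorphism of the underlying groups in the abstract, but since the statement of the lemma concerns exactly the images of $\sigma$ and $\tau$, the conjugation argument applies directly. Thus the entire proof fits in a couple of lines.
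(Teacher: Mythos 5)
Your proof is correct and takes essentially the same approach as the paper: both define the isomorphism $\sigma(G)\to\tau(G)$ as conjugation by the intertwining isomorphism $\phi$ (the paper calls it $h$), noting that conjugation is a group isomorphism $\GL(V)\to\GL(W)$ and that the equivalence identity $\tau(g)=\phi\circ\sigma(g)\circ\phi^{-1}$ shows the restriction to $\sigma(G)$ maps bijectively onto $\tau(G)$.
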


\begin{proof} Let $h:V\xrightarrow{\simeq}W$ be an isomorphism between $\sigma$ and $\tau$. There is a homomorphism of groups
\[ \h:\sigma(G)\longrightarrow\tau(G),\quad x\longmapsto h\circ x\circ h^{-1} \]
that is well defined thanks to the $G$-equivariance of $h$. Since $h$ gives a group isomorphism $\GL(V)\xrightarrow\simeq\GL(W)$, the map $\h$ is injective. On the other hand, if $g\in G$, then $\h\bigl(\sigma(g)\bigr)=\tau(g)$, hence $\h$ is surjective as well. \end{proof}

In the following statement, by ``ring'' we mean ``commutative ring with unity''.

\begin{lemma} \label{base-change-lemma}
Let $\sigma:G\rightarrow\GL(V)$ be a representation of a group $G$ over a ring $R$, let $S$ be a ring such that $R\subset S$ and let $\sigma_S:G\rightarrow\GL(V\otimes_RS)$ be the base change of $\sigma$ to $S$. The groups $\sigma(G)$ and $\sigma_S(G)$ are (canonically) isomorphic.
\end{lemma}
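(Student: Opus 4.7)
The plan is to construct the canonical isomorphism as the map $\h\colon\sigma(G)\to\sigma_S(G)$ defined by $\sigma(g)\mapsto\sigma_S(g)$; that is, to send an $R$-linear automorphism $\sigma(g)\in\GL(V)$ to its base change $\sigma(g)\otimes_R\id_S\in\GL(V\otimes_RS)$. The map is manifestly well-defined (if $\sigma(g)=\sigma(g')$ then \emph{a fortiori} $\sigma(g)\otimes\id_S=\sigma(g')\otimes\id_S$), and it is a group homomorphism because the formation of $\phi\otimes\id_S$ for $\phi\in\End_R(V)$ commutes with composition. Surjectivity is immediate, since every element of $\sigma_S(G)$ is by definition of the form $\sigma_S(g)=\sigma(g)\otimes\id_S$ for some $g\in G$, which is the image of $\sigma(g)$ under $\h$.

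The only substantive point is injectivity. Suppose $\h(\sigma(g))=\h(\sigma(g'))$; setting $\tau\defeq\sigma(g)\sigma(g')^{-1}\in\GL(V)$, this says $\tau\otimes_R\id_S=\id_{V\otimes_RS}$, and I need to conclude that $\tau=\id_V$. Equivalently, I must show that the canonical ring homomorphism $\End_R(V)\to\End_S(V\otimes_RS)$, $\phi\mapsto\phi\otimes\id_S$, is injective. In the contexts where Lemma \ref{base-change-lemma} will be applied (most notably for the two-dimensional representations $\rho_{\hf,\wp}$ and $\bar\rho_{\hf,\wp}$ of the previous subsection), $V$ is free of finite rank over $R$, so after fixing an $R$-basis both $\End_R(V)$ and $\End_S(V\otimes_RS)$ become matrix algebras, and the map in question reduces to the inclusion $M_n(R)\hookrightarrow M_n(S)$ induced by $R\subset S$, which is obviously injective.

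The main (minor) obstacle is precisely this injectivity step: for a completely arbitrary $R$-module $V$, the base-change map $V\to V\otimes_RS$ may fail to be injective (e.g., $R$-torsion in $V$ can be killed), so some mild hypothesis on $V$ or on the extension $R\subset S$ (freeness or flatness of $V$, or faithful flatness of $S$ over $R$) is needed to make the argument work. This is harmless in the representation-theoretic applications of the lemma, where $V$ is always free of finite rank over $R$, and the canonicity of $\h$ is transparent from its construction. Once injectivity is in hand, the same formulas show that the inverse map $\sigma_S(g)\mapsto\sigma(g)$ is well-defined and that $\h$ is a group isomorphism, completing the proof.
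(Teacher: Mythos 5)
Your proof is essentially the same as the paper's, which simply observes that $\sigma_S=\iota_S\circ\sigma$ for the canonical injection $\iota_S:\GL(V)\hookrightarrow\GL(V\otimes_RS)$ and then restricts $\iota_S$ to the subgroup $\sigma(G)$. You are right to flag that the injectivity of $\iota_S$ (which the paper takes for granted) is not automatic for an arbitrary $R$-module $V$ and really does need a hypothesis such as $V$ free of finite rank, $V$ flat, or $S$ faithfully flat over $R$; the paper glosses over this, but it is harmless since in every application (to $\rho_{\hf,\wp}$ and $\bar\rho_{\hf,\wp}$) the module $V$ is free of finite rank.
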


\begin{proof} If $\iota_S:\GL(V)\hookrightarrow\GL(V\otimes_RS)$ is the canonical injection, then $\sigma_S=\iota_S\circ\sigma$, and the lemma follows. \end{proof}

\begin{lemma} \label{dual-isom-lemma}
Let $\sigma:G\rightarrow\GL(V)$ be a finite-dimensional representation of a group $G$ over a field $F$ and let $\sigma^*:G\rightarrow\GL(V^*)$ be its dual representation. The groups $\sigma(G)$ and $\sigma^*(G)$ are isomorphic.
\end{lemma}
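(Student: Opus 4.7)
The plan is to exhibit a single group isomorphism $\iota:\GL(V)\xrightarrow{\simeq}\GL(V^*)$ such that $\sigma^*=\iota\circ\sigma$; once this is done, restricting $\iota$ to $\sigma(G)$ yields an isomorphism onto $\sigma^*(G)$. Because $V$ is finite dimensional, the usual transpose $A\mapsto A^\vee$ (where $A^\vee(\phi)\defeq\phi\circ A$) is a bijection $\GL(V)\to\GL(V^*)$, but it is contravariant: $(AB)^\vee=B^\vee A^\vee$. Composing the transpose with the inversion $A\mapsto A^{-1}$, which is also an anti-automorphism, one obtains a covariant bijection
\[
\iota:\GL(V)\longrightarrow\GL(V^*),\qquad \iota(A)\defeq (A^{-1})^\vee,
\]
and a direct check gives $\iota(AB)=((AB)^{-1})^\vee=(B^{-1}A^{-1})^\vee=(A^{-1})^\vee(B^{-1})^\vee=\iota(A)\iota(B)$. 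Hence $\iota$ is a group isomorphism.

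The second and final step is to match $\iota$ with the definition of the dual representation. By definition, for $g\in G$ and $\phi\in V^*$ one has $\sigma^*(g)(\phi)=\phi\circ\sigma(g^{-1})=\sigma(g)^\vee{}^{-1}(\phi)$ read from the standard convention, i.e. $\sigma^*(g)=\iota(\sigma(g))$. Therefore $\sigma^*=\iota\circ\sigma$ as maps $G\to\GL(V^*)$, so
\[
\sigma^*(G)=\iota(\sigma(G)),
\]
and the restriction of $\iota$ to $\sigma(G)$ is the sought-for isomorphism $\sigma(G)\simeq\sigma^*(G)$.

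There is no real obstacle here: the only point requiring care is keeping track of the direction of composition, which is why the two anti-automorphisms (transpose and inversion) must be combined to produce a genuine group homomorphism. The finite dimensionality of $V$ is used solely to ensure that the transpose is a bijection on the full linear group; no further hypotheses on $F$ or on $G$ are needed.
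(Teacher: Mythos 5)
Your proof is correct and takes essentially the same route as the paper: both implement the isomorphism $\sigma(g)\mapsto\sigma(g^{-1})^t$ (transpose-inverse). The small refinement in your write-up is that you first establish the global isomorphism $\iota:\GL(V)\to\GL(V^*)$, $A\mapsto(A^{-1})^\vee$, and then observe $\sigma^*=\iota\circ\sigma$, which sidesteps any well-definedness check on the restriction to $\sigma(G)$; the paper works in coordinates after fixing a basis and asserts the same map is an isomorphism without detailing the homomorphism check.
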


\begin{proof} Set $n\defeq\dim_F(V)$. It is convenient to fix a basis of $V$ over $F$ and equip the $F$-vector space $V^*$ with the dual basis, so that we can view $\sigma$ and $\sigma^*$ as taking values in $\GL_n(F)$. By definition of $\sigma^*$, one has
\[ \sigma^*(G)=\bigl\{\sigma(g^{-1})^t\mid g\in G\bigr\}, \]
where the symbol $(\cdot)^t$ denotes transpose. It is straightforward to check that the map
\[ \sigma(G)\longrightarrow\sigma^*(G),\quad\sigma(g)\longmapsto\sigma(g^{-1})^t \]
is a group isomorphism, and the lemma is proved. \end{proof}

We immediately obtain

\begin{proposition} \label{bar-T-image-prop}
The representations $\rho_g$ and $\bar\rho_g$ are of non-solvable type.
\end{proposition}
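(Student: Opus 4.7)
The strategy is to transport non-solvability from $\rho_{\hf,\wp}$ and $\bar\rho_{\hf,\wp}$, which are of non-solvable type by Proposition \ref{non-solvable-rho-prop}, to $\rho_g$ and $\bar\rho_g$ by exhibiting a chain of representation-theoretic equivalences and appealing to the invariance of the image under isomorphism, base change and duality (Lemmas \ref{representations-lemma}, \ref{base-change-lemma}, \ref{dual-isom-lemma}).

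For $\rho_g$, I would first unwind the setup $g=f_\wp^\flat$, so that $g^\sharp=f_\wp$. By Remark \ref{isom-rep-rem}, the representations $V_g$ and $V_{g^\sharp}=V_{f_\wp}$ are equivalent as $G_\Q$-representations after base change to $\bar\Q_p$. On the other hand, by the interpolation property recalled in \S \ref{residual-subsec}, the specialization $\T/\wp\T$ carrying $\rho_{\hf,\wp}$ is equivalent over $\bar\Q_p$ to the dual $V_{f_\wp}^*$. Chaining these two equivalences yields, over $\bar\Q_p$, an equivalence between $\rho_{\hf,\wp}$ and $\rho_g^*$. I would then apply Lemma \ref{representations-lemma} to identify the associated image groups, Lemma \ref{base-change-lemma} to pass from the $\bar\Q_p$-extensions to $\rho_{\hf,\wp}$ and $\rho_g^*$ themselves, and Lemma \ref{dual-isom-lemma} to replace $\rho_g^*$ by $\rho_g$. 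Combining, $\mathrm{im}(\rho_g)$ is isomorphic as an abstract group to $\mathrm{im}(\rho_{\hf,\wp})$, and is therefore non-solvable by Proposition \ref{non-solvable-rho-prop}.

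The residual case proceeds along the same lines. Because $p\in\Xi_f$, Lemma \ref{distinguished-lemma} ensures that $\bar\rho_{f_\wp}$ is absolutely irreducible and $p$-distinguished, so by the bulleted statements at the end of \S \ref{residual-subsec} the reduction $\bar\T$ is genuinely isomorphic, after finite base change, to $\bar\rho_{f_\wp}$ (and not merely to its semisimplification). Reducing the chain of equivalences produced above mod the maximal ideals gives an equivalence between $\bar\rho_{\hf,\wp}$ and $\bar\rho_g^*$ after base change, and the same threefold application of Lemmas \ref{representations-lemma}, \ref{base-change-lemma}, \ref{dual-isom-lemma}, combined with Proposition \ref{non-solvable-rho-prop}, concludes. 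The delicate point, and indeed the main obstacle, is precisely this residual compatibility: one must know that reduction and specialization commute in such a way that $\bar\rho_{\hf,\wp}$ is equivalent to $\bar\rho_g^*$ and not only to its semisimplification. This is exactly what the careful definition of $\Xi_f$ in \S \ref{heegner-subsec} (and the ensuing Lemma \ref{distinguished-lemma}) is designed to guarantee; the remainder is bookkeeping for duals, base changes and the $p$-stabilization identification.
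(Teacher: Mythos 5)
Your proposal is correct and follows essentially the same route as the paper: transport non-solvability from $\rho_{\hf,\wp}$ and $\bar\rho_{\hf,\wp}$ (Proposition \ref{non-solvable-rho-prop}) to $\rho_g$ and $\bar\rho_g$ by applying Lemmas \ref{representations-lemma}, \ref{base-change-lemma} and \ref{dual-isom-lemma} in sequence to the equivalences recorded in \S\ref{residual-subsec}. One minor remark: your worry about whether $\bar\rho_{\hf,\wp}$ is equivalent to $\bar\rho_g^*$ and not merely to its semisimplification is actually harmless for this particular statement, because the image of any representation surjects onto the image of its semisimplification, so non-solvability of the semisimplification already forces non-solvability of $\bar\rho_g^*$ itself; the irreducibility guaranteed by $p\in\Xi_f$ is needed elsewhere in the paper but is not the crux here.
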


\begin{proof} As already remarked in \S \ref{residual-subsec}, the dual representations $\rho_g^*$ and $\bar\rho_g^*$ are equivalent after finite base changes to $\rho_{\hf,\wp}$ and $\bar\rho_{\hf,\wp}$, respectively, so combining Proposition \ref{non-solvable-rho-prop} with Lemmas \ref{representations-lemma} and \ref{base-change-lemma} shows that $\rho_g^*$ and $\bar\rho_g^*$ are of non-solvable type. Now the proposition follows from Lemma \ref{dual-isom-lemma}. \end{proof}



The next lemma will allow us to deduce the non-solvability of the image of $\bar\rho_g^\dagger$ from the corresponding result for $\bar\rho_g$.

\begin{lemma} \label{solvable-lemma}
Let $\sigma:G\rightarrow\GL(V)$ and $\tau:G\rightarrow\GL(W)$ be representations of a group $G$ over a field $F$. If $\sigma$ and $\tau$ are of solvable type, then $\sigma\otimes\tau:G\rightarrow\GL(V\otimes_FW)$ is of solvable type.
\end{lemma}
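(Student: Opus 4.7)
My plan is to realize $(\sigma\otimes\tau)(G)$ as a homomorphic image of a subgroup of the direct product $\sigma(G)\times\tau(G)$, and then invoke the standard closure properties of the class of solvable groups (closure under subgroups, quotients and finite direct products).

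First, I would recall that the Kronecker tensor product gives a well-defined group homomorphism
\[
\Phi:\GL(V)\times\GL(W)\longrightarrow\GL(V\otimes_FW),\qquad(A,B)\longmapsto A\otimes B,
\]
since $(A\otimes B)(A'\otimes B')=(AA')\otimes(BB')$ and $\Phi(\mathrm{id},\mathrm{id})=\mathrm{id}$. Next, consider the diagonal-type map
\[
\Delta:G\longrightarrow\sigma(G)\times\tau(G),\qquad g\longmapsto\bigl(\sigma(g),\tau(g)\bigr),
\]
which is a group homomorphism. By the very definition of the tensor product representation, $(\sigma\otimes\tau)(g)=\sigma(g)\otimes\tau(g)=\Phi\bigl(\Delta(g)\bigr)$, so
\[
(\sigma\otimes\tau)(G)=\Phi\bigl(\Delta(G)\bigr).
\]

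At this point the argument is purely group-theoretic. By hypothesis, both $\sigma(G)$ and $\tau(G)$ are solvable, hence so is the direct product $\sigma(G)\times\tau(G)$ (a finite product of solvable groups is solvable, since each factor sits inside as a normal subgroup with solvable quotient). The subgroup $\Delta(G)\subseteq\sigma(G)\times\tau(G)$ is therefore solvable (subgroups of solvable groups are solvable), and its image $\Phi(\Delta(G))$ under the group homomorphism $\Phi$ remains solvable (quotients of solvable groups are solvable). This is exactly $(\sigma\otimes\tau)(G)$, so $\sigma\otimes\tau$ is of solvable type.

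There is essentially no obstacle here: the only thing worth checking carefully is that $\Phi$ is indeed a homomorphism into $\GL(V\otimes_FW)$, which amounts to the functoriality of the tensor product for linear maps; all the rest is a chain of elementary facts about solvable groups. The lemma will then enter the main text as the bridge needed to transfer non-solvability from $\bar\rho_g$ (established in Proposition \ref{bar-T-image-prop}) to $\bar\rho_g^\dagger$ via the Tate twist formula \eqref{dagger-twist-eq}, by contraposition.
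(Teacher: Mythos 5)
Your proof is correct and follows essentially the same strategy as the paper's: construct the homomorphism $\Phi:\GL(V)\times\GL(W)\to\GL(V\otimes_F W)$, route the image of $\sigma\otimes\tau$ through a (sub)group of a direct product of the two solvable images, and invoke closure of solvability under products, subgroups and quotients. The only cosmetic difference is that you map $G$ diagonally into $\sigma(G)\times\tau(G)$ directly, whereas the paper first maps $G\times G$ via $\Phi\circ(\sigma,\tau)$ and then restricts along the diagonal $\iota_G$; these are the same argument.
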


\begin{proof} Let $x\in\GL(V)$ and $y\in\GL(W)$; the map
\[ V\times W\longrightarrow V\otimes_FW,\quad(v,w)\longmapsto x(v)\otimes y(w) \]
is $F$-bilinear, hence it induces an $F$-linear map
\[ \Phi(x,y):V\otimes_FW\longrightarrow V\otimes_FW \]
with inverse $\Phi(x^{-1},y^{-1})$. It follows that $\Phi(x,y)\in\GL(V\otimes_FW)$. It is immediate to check that the resulting map
\[ \Phi:\GL(V)\times\GL(W)\longrightarrow\GL(V\otimes_FW),\quad(x,y)\longmapsto\Phi(x,y)  \]
is a group homomorphism. Since the representations $\sigma$ and $\tau$ are of solvable type and the direct product of two solvable groups is solvable, the image of the group homomorphism
\[ \Phi\circ(\sigma,\tau):G\times G\longrightarrow\GL(V\otimes_FW) \]
is (isomorphic to) a quotient of a solvable group, hence it is solvable. Let $\iota_G:G\rightarrow G\times G$ be the diagonal embedding sending $g$ to $(g,g)$. On the one hand, the image of the group homomorphism $\Phi\circ(\sigma,\tau)\circ\iota_G$ is a subgroup of the image of $\Phi\circ(\sigma,\tau)$, so it is solvable. On the other hand, the image of $\Phi\circ(\sigma,\tau)\circ\iota_G$ is equal, by construction, to the image of $\sigma\otimes\tau$, and the lemma is proved. \end{proof}

Let $\pi_g$ be a uniformizer for $\cO_{\Q_g,\fP}$ and let $\F_g\defeq\cO_{\Q_g,\fP}\big/\pi_g\cO_{\Q_g,\fP}$ be the residue field of $\Q_{g,\fP}$. We can finally prove

\begin{proposition} \label{non-solvable-prop} 
The representations $\rho_g^\dagger$ and $\bar\rho_g^\dagger$ are of non-solvable type.
\end{proposition}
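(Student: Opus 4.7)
The plan is to reduce the non-solvability of $\rho_g^\dagger$ and $\bar\rho_g^\dagger$ to the non-solvability of $\rho_g$ and $\bar\rho_g$ already established in Proposition \ref{bar-T-image-prop}, by exploiting the fact that the critical (self-dual) twist differs from the untwisted representation only by tensoring with a one-dimensional character.

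First I would recall that, by construction, $V_g^\dagger = V_g(k_\wp/2)$, so that as $G_\Q$-representations on a two-dimensional $\Q_{g,\fP}$-vector space one has $\rho_g^\dagger \simeq \rho_g \otimes \varepsilon_{\cyc}^{k_\wp/2}$, where $\varepsilon_{\cyc}\colon G_\Q\to\Z_p^\times\hookrightarrow\Q_{g,\fP}^\times$ is the $p$-adic cyclotomic character. Since $\varepsilon_{\cyc}^{k_\wp/2}$ is one-dimensional, its image is abelian, hence of solvable type. I would then argue by contradiction: suppose that $\rho_g^\dagger$ is of solvable type. Lemma \ref{solvable-lemma} applied to $\sigma\defeq\rho_g^\dagger$ and $\tau\defeq\varepsilon_{\cyc}^{-k_\wp/2}$, both of solvable type, would then force
\[ \rho_g \simeq \rho_g^\dagger\otimes\varepsilon_{\cyc}^{-k_\wp/2} \]
to be of solvable type, contradicting Proposition \ref{bar-T-image-prop}.

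For the residual case, the hypothesis $k_\wp\equiv 2\pmod{2(p-1)}$ lets me invoke Remark \ref{twist-rem}, which gives the identification $\bar T_g^\dagger = \bar T_g(1)$ and hence an equivalence $\bar\rho_g^\dagger\simeq\bar\rho_g\otimes\bar\varepsilon_{\cyc}$ of $G_\Q$-representations over $\F_g$, where $\bar\varepsilon_{\cyc}$ is the mod $p$ cyclotomic character. The same contradiction argument, now with $\sigma\defeq\bar\rho_g^\dagger$ and $\tau\defeq\bar\varepsilon_{\cyc}^{-1}$ in Lemma \ref{solvable-lemma}, together with the non-solvability of the image of $\bar\rho_g$ coming from Proposition \ref{bar-T-image-prop}, yields the claim for $\bar\rho_g^\dagger$.

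I do not anticipate any real obstacle: the whole argument is a formal manipulation, combining Lemma \ref{solvable-lemma} with the fact that twisting by a one-dimensional representation cannot turn a non-solvable image into a solvable one. The only thing worth double-checking is that the twists $V_g(k_\wp/2)$ and $\bar T_g(1)$ appearing in \S \ref{reduction-subsec} are, by definition, genuine tensor products of representations over the relevant field (so that Lemma \ref{solvable-lemma} applies directly), but this is immediate from the definition of a Tate twist.
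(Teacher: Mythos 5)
Your proof is correct and follows essentially the same route as the paper: arguing by contradiction via Lemma \ref{solvable-lemma}, using that tensoring with a one-dimensional (hence solvable) twist such as $\varepsilon_{\cyc}^{\pm k_\wp/2}$ cannot alter solvability, then invoking Proposition \ref{bar-T-image-prop}. The only cosmetic difference is that in the residual case the paper uses the general identity $\bar T_g^\dagger=\bar T_g(k_\wp/2)$ of \eqref{dagger-twist-eq} rather than the specialization $\bar T_g^\dagger=\bar T_g(1)$ of Remark \ref{twist-rem}, but under the standing hypothesis $k_\wp\equiv2\pmod{2(p-1)}$ these coincide, so your argument is a faithful reconstruction.
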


\begin{proof} Arguing by contradiction, suppose that $V^\dagger_g=V_g\otimes_{\Q_{g,\fP}}\Q_{g,\fP}(k/2)$ is of solvable type. The representation $\Q_{g,\fP}(-k/2)$ of $G_\Q$ is one-dimensional, hence of solvable type, therefore Lemma \ref{solvable-lemma} ensures that 
\[ V_g=V_g^\dagger\otimes_{\Q_{g,\fP}}\Q_{g,\fP}(-k/2) \]
is a representation of solvable type. This contradicts Proposition \ref{bar-T-image-prop} and shows that $\rho_g^\dagger$ is of non-solvable type. Using the reduction of $\mathscr O(-k/2)$, which is one-dimensional, one can prove the claim for $\bar\rho_g^\dagger$ in a completely analogous way. \end{proof}

In the following, let $L$ be a number field. Let us define $W^\dagger_g\defeq V^\dagger_g\big/T^\dagger_g$.

\begin{corollary} \label{solvable-coro}
If $L/\Q$ is solvable, then $H^0\bigl(L,V^\dagger_g\bigr)=H^0\bigl(L,W^\dagger_g\bigr)=H^0\bigl(L,\bar T^\dagger_g\bigr)=0$.
\end{corollary}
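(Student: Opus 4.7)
The plan is to reduce each of the three vanishing statements to the fact (Proposition \ref{non-solvable-prop}) that the relevant $G_\Q$-representations have non-solvable image, exploiting irreducibility to pass from a would-be $G_L$-invariant subspace to a $G_\Q$-stable one.

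First I would reduce to the case in which $L/\Q$ is Galois: if $\tilde L$ denotes the Galois closure of $L$ in $\bar\Q$, then $\tilde L/\Q$ is still solvable (a compositum of conjugates of $L$), and $H^0(L,-) \subset H^0(\tilde L,-)$ since $G_{\tilde L}\subset G_L$... wait, the inclusion goes the other way: $G_{\tilde L}\subset G_L$ gives $H^0(L,-)\subset H^0(\tilde L,-)$, so it suffices to kill $H^0(\tilde L,-)$. Hence we may assume $L/\Q$ is Galois and solvable, which makes $G_L$ a normal subgroup of $G_\Q$ with solvable quotient $\Gal(L/\Q)$.

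For $H^0(L,V_g^\dagger)$: since $G_L \trianglelefteq G_\Q$, the subspace $H^0(L,V_g^\dagger)$ is stable under $G_\Q$ (if $v$ is $G_L$-fixed and $\sigma\in G_\Q$, then $\sigma v$ is $\sigma G_L\sigma^{-1} = G_L$-fixed). By Remark \ref{irreducible-rem}, $V_g^\dagger$ is an irreducible $G_\Q$-representation, so $H^0(L,V_g^\dagger)$ is either $0$ or all of $V_g^\dagger$. In the latter case, $G_L$ acts trivially on $V_g^\dagger$, so $\rho_g^\dagger$ factors through the solvable quotient $\Gal(L/\Q)$ and has solvable image, contradicting Proposition \ref{non-solvable-prop}. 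Hence $H^0(L,V_g^\dagger)=0$.

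For $H^0(L,\bar T_g^\dagger)$: I first claim $\bar T_g^\dagger$ is irreducible as a $G_\Q$-representation. If it were reducible, $\bar\rho_g^\dagger$ would stabilize a line, so (after a change of basis) it would take values in the upper-triangular Borel subgroup of $\GL_2(\F_g)$, which is solvable (metabelian: unipotent radical abelian, diagonal torus abelian). This would contradict the non-solvability of the image of $\bar\rho_g^\dagger$ granted by Proposition \ref{non-solvable-prop}. Given irreducibility, the argument of the previous paragraph applies verbatim to give $H^0(L,\bar T_g^\dagger)=0$.

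Finally, for $H^0(L,W_g^\dagger)$ with $W_g^\dagger = V_g^\dagger/T_g^\dagger$: this module is $\pi_g$-divisible torsion, and its $\pi_g$-torsion subgroup is canonically identified with $T_g^\dagger/\pi_g T_g^\dagger = \bar T_g^\dagger$. If $w\in H^0(L,W_g^\dagger)$ were nonzero, choose the smallest $n\geq 1$ with $\pi_g^n w=0$; then $\pi_g^{n-1}w$ is a nonzero $G_L$-invariant element of $W_g^\dagger[\pi_g]=\bar T_g^\dagger$, contradicting the vanishing just established. The only nontrivial step in the proof is the irreducibility of $\bar T_g^\dagger$, and it is deduced cleanly from the non-solvability of $\bar\rho_g^\dagger$ via the elementary observation that any reducible two-dimensional representation has image in a (solvable) Borel subgroup.
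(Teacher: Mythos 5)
Your proof is correct and, in spirit, matches the paper's: the paper itself cites \cite[Lemma 3.10]{LV} and \cite[Lemma 2.4]{LV-kyoto} for exactly the normal-subgroup argument on irreducible modules and the $\pi_g$-d\'evissage you spell out. There is one small but genuine departure worth flagging: for the irreducibility of $\bar T_g^\dagger$ the paper invokes Lemma~\ref{distinguished-lemma} (absolute irreducibility of $\bar\rho_{f,\p}$, propagated to all specializations) together with Remark~\ref{irreducible-rem}, whereas you derive it directly from non-solvability of the image of $\bar\rho_g^\dagger$, observing that a reducible two-dimensional representation lands in a Borel subgroup, which is metabelian. That shortcut is valid and nicely self-contained --- it makes Proposition~\ref{non-solvable-prop} the single hypothesis driving all three vanishing statements --- though it gives only irreducibility over $\F_g$, which is all you need here (the argument does not require absolute irreducibility). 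Two minor housekeeping remarks: your preliminary reduction to a Galois solvable $L/\Q$ is appropriate and correct (the Galois closure is again solvable and one has $H^0(L,-)\subset H^0(\tilde L,-)$); and, unlike the paper's order of presentation, you necessarily treat $\bar T_g^\dagger$ before $W_g^\dagger$, since your argument for the latter depends on the vanishing for $W_g^\dagger[\pi_g]\simeq\bar T_g^\dagger$.
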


\begin{proof} By Remark \ref{irreducible-rem} and Proposition \ref{non-solvable-prop}, $V_g^\dagger$ is irreducible and of non-solvable type, and then $H^0\bigl(L,V^\dagger_g\bigr)=0$ by the arguments in the proof of \cite[Lemma 3.10, (1)]{LV}. On the other hand, to show that $H^0\bigl(L,W^\dagger_g\bigr)=0$ one can proceed as in the proof of \cite[Lemma 2.4, (1)]{LV-kyoto}. Finally, Remark \ref{irreducible-rem}, Lemma \ref{distinguished-lemma} and Proposition \ref{non-solvable-prop} ensure that $\bar T^\dagger_g$ is irrreducible and of non-solvable type, and then $H^0\bigl(L,\bar T^\dagger_g\bigr)=0$ as in the proof of \cite[Lemma 3.10, (2)]{LV}. \end{proof}

\begin{corollary} \label{no-torsion-coro}
If $L/\Q$ is solvable, then the $\cO_{\Q_g,\fP}$-module $H^1\bigl(L,T^\dagger_g\bigr)$ is torsion-free.
\end{corollary}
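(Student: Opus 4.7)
The plan is to use the short exact sequence of $G_L$-modules
\[
0\longrightarrow T^\dagger_g\xrightarrow{\;\pi_g\;}T^\dagger_g\longrightarrow\bar T^\dagger_g\longrightarrow0
\]
given by multiplication by the uniformizer $\pi_g$ of $\cO_{g,\fP}$. Since $\cO_{g,\fP}$ is a discrete valuation ring with uniformizer $\pi_g$, the $\cO_{g,\fP}$-module $H^1\bigl(L,T^\dagger_g\bigr)$ is torsion-free if and only if multiplication by $\pi_g$ on it is injective.

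Taking continuous $G_L$-cohomology of the sequence above produces the exact fragment
\[
H^0\bigl(L,\bar T^\dagger_g\bigr)\longrightarrow H^1\bigl(L,T^\dagger_g\bigr)\xrightarrow{\;\pi_g\;}H^1\bigl(L,T^\dagger_g\bigr),
\]
so the kernel of multiplication by $\pi_g$ on $H^1\bigl(L,T^\dagger_g\bigr)$ is a quotient of $H^0\bigl(L,\bar T^\dagger_g\bigr)$. By Corollary \ref{solvable-coro}, the hypothesis that $L/\Q$ is solvable forces $H^0\bigl(L,\bar T^\dagger_g\bigr)=0$, so multiplication by $\pi_g$ on $H^1\bigl(L,T^\dagger_g\bigr)$ is injective and the conclusion follows.

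There is essentially no obstacle once Corollary \ref{solvable-coro} is in hand: the statement reduces to a one-line diagram chase after invoking the vanishing of $H^0\bigl(L,\bar T^\dagger_g\bigr)$ supplied by the non-solvability of the image of $\bar\rho^\dagger_g$ established in Proposition \ref{non-solvable-prop}.
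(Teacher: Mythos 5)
Your proof is correct and follows exactly the paper's argument: use the multiplication-by-$\pi_g$ short exact sequence, pass to cohomology, and invoke the vanishing of $H^0\bigl(L,\bar T^\dagger_g\bigr)$ from Corollary \ref{solvable-coro}. The only small detail you leave implicit is the justification for injectivity of $\pi_g$ on $T^\dagger_g$ (exactness at the left of the short exact sequence), which the paper notes follows because $T^\dagger_g$ is a free, hence torsion-free, $\cO_{g,\fP}$-module.
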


\begin{proof} Set $T\defeq T^\dagger_g$ and $\bar T\defeq\bar T^\dagger_g$. Clearly, it suffices to check that the $\pi_g$-torsion of $H^1(L,T)$ is trivial. Since $T$ is free (hence torsion-free) over $\cO_{\Q_g,\fP}$, there is a short exact sequence of Galois modules
\[ 0\longrightarrow T\xrightarrow{\pi_g\cdot}T\longrightarrow T/\pi_gT=\bar T\longrightarrow0, \] 
where the first non-trivial arrow is multiplication by $\pi_g$. Passing to cohomology, we see that the $\pi_g$-torsion submodule of $H^1(L,T)$ is a quotient of $H^0(L,\bar T)$, which is trivial thanks to Corollary \ref{solvable-coro}. \end{proof}

\section{Shafarevich--Tate groups in Hida families: the rank one case} \label{sha-sec}

In this section, we prove our results (Theorem \ref{sha-thm}) on Shafarevich--Tate groups and on images of $p$-adic \'etale Abel--Jacobi maps attached to a large class of higher (even) weight modular forms in the Hida family $\hf$ introduced at the end of \S \ref{heegner-subsec}, and so in a rank $1$ setting. Along the way, we prove a non-torsionness result (Theorem \ref{non-torsion-prop}) for certain $K$-rational Heegner cycles $y_{\wp,K}$ where $\wp$ is any arithmetic prime of weight $k_\wp>2$ such that $k_\wp\equiv2\pmod{2(p-1)}$ and trivial character. This result, which builds on work of Castella and of Ota on specializations of Howard's big Heegner points, will be crucial also in the proof of our theorems on Greenberg's conjecture when $r_{\min}(\hf)=1$ (Section \ref{rank-sec}) and on Shafarevich--Tate groups and $p$-adic Abel--Jacobi images in rank $0$ (Section \ref{sha-sec2}).

\subsection{Distinguished specialization maps} \label{distinguished-subsec}

Let $\wp$ be an arithmetic prime of $\mathcal R$ of weight $k_\wp\equiv2\pmod{2(p-1)}$ and trivial character. Let us keep the notation of \S \ref{lambda-subsec} in force; in particular, $f^\flat_\wp$ is a newform of level $N$, weight $k_\wp$ and trivial character such that $(f^\flat_\wp{)}^\sharp=f_\wp$ (and $f_{\wp_2}^\flat=f$). Recall that, by Remark \ref{isom-rep-rem}, the representations $V_{f_\wp}$ and $V_{f_\wp^\flat}$ are equivalent (possibly after a finite base change). It follows that the isomorphism in \eqref{specialization-eq} induces a (non-canonical) isomorphism $\T^\dagger_\wp\big/\wp\T^\dagger_\wp\simeq V^\dagger_{f_\wp^\flat}$.

Following Ota, we fix once and for all the distinguished $G_\Q$-equivariant specialization map
\begin{equation} \label{specialization-eq2}
\sspp^\wp_0:\T^\dagger\longrightarrow T^\dagger_{f_\wp^\flat}
\end{equation}
that is described in \cite[\S 2.6]{Ota-JNT}. As in \eqref{specialization-eq}, the map in \eqref{specialization-eq2} factors through the quotient projection $\T^\dagger\twoheadrightarrow\T^\dagger/\wp\T^\dagger$ and induces an isomorphism
\begin{equation} \label{specialization-V-eq}
\sspp^\wp_0:\T^\dagger_\wp\big/\wp\T^\dagger_\wp\overset\simeq\longrightarrow V^\dagger_{f^\flat_\wp} 
\end{equation}
of representations of $G_\Q$ after a finite base change. There is a canonical identification
\[ \T^\dagger_\wp\big/\wp\T^\dagger_\wp=\bigl(\T^\dagger/\wp\T^\dagger\bigr)\otimes_{\mathcal R/\wp}\mathcal F_\wp, \]
which shows that $\T^\dagger/\wp\T^\dagger$ sits as an $\mathcal R/\wp$-lattice inside $\T^\dagger_\wp\big/\wp\T^\dagger_\wp$. By Lemma \ref{distinguished-lemma},  $\bar\rho_{f,\p}$ is (absolutely) irreducible and $p$-distinguished. As a consequence of Remark \ref{isom-rep-rem} and \S \ref{residual-subsec}, these two properties are then enjoyed by $\bar\rho_{f_\wp^\flat}$ for every arithmetic prime $\wp$. Furthermore, all $\bar\rho^\dagger_{f_\wp^\flat}$ are (absolutely) irreducible as well. It follows (see \S \ref{residual-subsec}) that $\T$ is free of rank $2$ over $\mathcal R$ and $\bar\T$ is equivalent (up to a finite base change) to $\bar\rho_{f_\wp}$ for all $\wp$. 

Choose a finite extension $L/\Q_p$ inside $\bar\Q_p$ such that $\mathcal F_\wp\cup\Q_{f_\wp,\fP}\subset L$ and write $\cO_L$ for the valuation ring of $L$. Isomorphism \eqref{specialization-V-eq} induces an isomorphism
\begin{equation} \label{specialization-eq4}
\sspp_0^\wp:\bigl(\T^\dagger/\wp\T^\dagger\bigr)\otimes_{\mathcal R/\wp}\cO_L\overset\simeq\longrightarrow T^\dagger_{f_\wp^\flat}\otimes_{\cO_{f_\wp^\flat,\fP}}\cO_L
\end{equation}
(see, \emph{e.g.}, \cite[\S 7.3]{KLZ}). With notation as in \S \ref{residual-subsec} and using equality \eqref{T-T-dagger-eq}, the reduction of $\bigl(\T^\dagger/\wp\T^\dagger\bigr)\otimes_{\mathcal R/\wp}\cO_L$ modulo the maximal ideal of $\cO_L$ is a finite base change of $\bar\T$. Thus, the isomorphism in \eqref{specialization-eq4} determines, after a finite base change (\emph{i.e.}, over $\bar\F_p$), an isomorphism
\begin{equation} \label{specialization-eq3}
\bsspp^\wp_0:\bar\T\overset\simeq\longrightarrow\bar T_{f_\wp^\flat}^\dagger 
\end{equation}
of representations of $G_\Q$ that makes the square
\begin{equation} \label{specialization-square}
\xymatrix@C=35pt{\T^\dagger\ar[r]^-{\sspp^\wp_0}\ar@{->>}[d]^-{\pi_{\mathcal R}}&T^\dagger_{f_\wp^\flat}\ar@{->>}[d]\\
                   \bar\T\ar[r]^-{\bsspp^\wp_0}_-\simeq&\bar T_{f_\wp^\flat}^\dagger}
\end{equation}
commute; here $\pi_{\mathcal R}$ is, as at the end of \S \ref{hida-subsec}, given by reduction modulo $\mathfrak m_\mathcal R$ and the vertical arrow on the right is the canonical projection. By functoriality, the maps in \eqref{specialization-eq2} and \eqref{specialization-eq3} determine maps
\begin{equation} \label{specialization-eq5}
\sspp^\wp_{0,\mathcal F}:H^1(\mathcal F,\T^\dagger)\longrightarrow H^1\Bigl(\mathcal F,T^\dagger_{f_\wp^\flat}\Bigr),\quad\bsspp^\wp_{0,\mathcal F}:H^1(\mathcal F,\bar\T)\overset\simeq\longrightarrow H^1\Bigl(\mathcal F,\bar T^\dagger_{f_\wp^\flat}\Bigr)
\end{equation}
for any number field $\mathcal F$, and then \eqref{specialization-square} gives a commutative square
\begin{equation} \label{specialization-square2}
\xymatrix@R=32pt@C=37pt{H^1(\mathcal F,\T^\dagger)\ar[r]^-{\sspp^\wp_{0,\mathcal F}}\ar[d]^-{\pi_{\mathcal R,\mathcal F}}&H^1\Big(\mathcal F,T^\dagger_{f_\wp^\flat}\Big)\ar[d]^-{\pi_{\wp,\mathcal F}}\\
                   H^1(\mathcal F,\bar\T)\ar[r]^-{\bsspp^\wp_{0,\mathcal F}}_-\simeq&H^1\Big(\mathcal F,\bar T_{f_\wp^\flat}^\dagger\Big)}
\end{equation}
in Galois cohomology, where $\pi_{\mathcal R,\mathcal F}$ is the map in \eqref{pi-R-L-eq} and $\pi_{\wp,\mathcal F}$ is induced by the surjection $T^\dagger_{f_\wp^\flat}\twoheadrightarrow\bar T_{f_\wp^\flat}^\dagger$. 

\subsection{\'Etale Abel--Jacobi maps and Heegner cycles} \label{cycles-subsec}

We briefly review basic properties of Heegner cycles attached to higher (even) weight modular forms. These cycles were originally introduced by Nekov\'a\v{r} in \cite{Nek} in order to generalize Kolyvagin's theory of Euler systems of Heegner points on modular abelian varieties to Chow groups of Kuga--Sato varieties. We will not work in maximal generality here, but rather consider only Heegner cycles for modular forms in the Hida family $\hf$ that was fixed in \S \ref{heegner-subsec}.

Let $\wp$ be an arithmetic prime of $\mathcal R$ of even weight $k_\wp>2$ and trivial character. Set $g\defeq f_\wp^\flat$ and $k\defeq k_\wp$. With notation as in \cite{LV}, let $\tilde\E_N^{k-2}$ be the Kuga--Sato variety of level $N$ and weight $k$ (see, \emph{e.g.}, \cite[\S 2.2]{LV}) and for any number field $L$ let $\CH^{k/2}\bigl(\tilde\E_N^{k-2}/L\bigr)_0$ be the Chow group of rational equivalence classes of codimension $k/2$ cycles on $\tilde\E_N^{k-2}$ defined over $L$ that are homologically trivial, \emph{i.e.}, belong to the kernel of the cycle class map in $\ell$-adic \'etale cohomology (see, \emph{e.g.}, \cite[\S 3.3]{Andre} or \cite[Chapter VI, \S 9]{Milne}; see also \cite[\S 1.4]{Nek3} for details on the independence of the above-mentioned kernel of the prime number $\ell$). Recall that, as a shorthand, $\cO_{\Q_g,\fP}$ stands for $\cO_{\Q_g,\fP_g}$. As explained, \emph{e.g.}, in \cite[\S 2.3]{LV}, there is an $\cO_{\Q_g,\fP}$-linear \'etale Abel--Jacobi map (denoted by $\AJ_{g,\fP,L}$ in \cite{LV})
\begin{equation} \label{AJ-eq}
\Phi_{g,\fP,L}:{\CH}^{k/2}\bigl(\tilde\E_N^{k-2}/L\bigr)_0\otimes_\Z\cO_{\Q_g,\fP}\longrightarrow H^1\bigl(L,T_g^\dagger\bigr).
\end{equation}
We set $\Lambda_{g,\fP}(L)\defeq\im(\Phi_{g,\fP,L})$. Now let $H^1_f\bigl(L,T^\dagger_g\bigr)$ be the Selmer group of $T^\dagger_g$ over $L$ in the sense of Bloch--Kato (\cite{BK}; see also, \emph{e.g.}, \cite[\S 2.4]{LV}). By definition, $H^1_f\bigl(L,T^\dagger_g\bigr)$ is an $\cO_{\Q_g,\fP}$-submodule of $H^1\bigl(L,T^\dagger_g\bigr)$; furthermore, it is well known that $H^1_f\bigl(L,T^\dagger_g\bigr)$ is finitely generated over $\cO_{\Q_g,\fP}$. By \cite[Corollary 2.7, (2)]{LV}, there is an inclusion of $\cO_{\Q_g,\fP}$-modules $\Lambda_{g,\fP}(L)\subset H^1_f\bigl(L,T^\dagger_g\bigr)$, hence $\Lambda_{g,\fP}(L)$ is finitely generated over $\cO_{\Q_g,\fP}$ as well. 

In the following definition, $L$ is a number field.

\begin{definition} \label{algebraic-rank-def}
The \emph{algebraic $\fP$-rank of $g$ over $L$} is $r_{\alg,\fP}(g/L)\defeq\rank_{\cO_{\Q_g,\fP}}\Lambda_{g,\fP}(L)$.
\end{definition}


Upon combining Greenberg's conjecture (Conjecture \ref{greenberg-conj}) with the conjectures of Birch--Swinnerton-Dyer (\cite[\S 1]{Tate-BSD}) and of Beilinson--Bloch--Kato (\cite[Conjecture 2.10]{LV}) on $L$-functions of modular forms, it is natural to propose the following

\begin{conjecture} \label{main-conj}
The equality
\[ r_{\alg,\fP}\bigl(f_\wp^\flat/\Q\bigr)=r_{\min}\bigl(\hf\bigr) \] 
holds for all but finitely many arithmetic primes $\wp$ of even weight and trivial character.
\end{conjecture}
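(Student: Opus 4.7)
The plan is to prove $r_{\alg,p}(f_\wp^\flat/\Q) = r_{\min}(\hf)$ by establishing matching upper and lower bounds, splitting according to the sign of the root number $\varepsilon(\hf)$. Since Conjecture \ref{main-conj} bundles Greenberg's conjecture with Beilinson--Bloch--Kato / BSD for the specializations of $\hf$, I expect it to be out of reach in full generality; but the machinery assembled so far in the excerpt (big Galois representations that are residually irreducible, $p$-distinguished, and non-solvable, together with big Heegner classes and Kolyvagin--Nekov\'a\v{r} Euler systems) suggests a substantial partial approach in each sign case.

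Assume first $\varepsilon(\hf) = -1$, so $r_{\min}(\hf) = 1$. For the lower bound, I would use the $K$-rational cycle $y_{\wp,K}$ constructed from big Heegner classes via Howard, Castella and Ota. Granting its non-torsionness in $\Lambda_{f_\wp^\flat,\fP}(K)$ (the paper's forthcoming key technical input), I descend to $\Q$ via the complex conjugation splitting: since $\varepsilon(\hf \otimes \chi_K) = +1$ and one expects $r_\an(f_\wp^\flat \otimes \chi_K) = 0$ generically (Greenberg for the twist), the $\Q$-rational trace of $y_{\wp,K}$ remains non-torsion, and this non-triviality is confirmed analytically by Zhang's generalization of Gross--Zagier. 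For the upper bound, I would invoke Nekov\'a\v{r}'s Euler system of Heegner cycles (\cite{Nek}) which, under residual irreducibility (Lemma \ref{distinguished-lemma}) and non-solvability (Proposition \ref{non-solvable-prop}) of $\bar\rho^\dagger_{f_\wp^\flat}$, bounds $\rank_{\cO_{f_\wp^\flat,\fP}}\Lambda_{f_\wp^\flat,\fP}(K) \leq 1$ and forces $\Sha_\fP(f_\wp^\flat/K)$ to be finite; combining with the lower bound gives equality over $K$, whence over $\Q$ after the eigenspace decomposition.

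Now assume $\varepsilon(\hf) = +1$, so $r_{\min}(\hf) = 0$, and I want $\Lambda_{f_\wp^\flat,\fP}(\Q)$ torsion. Here the Heegner argument is inapplicable (the relevant trace should vanish), so I would instead use Kato's Euler system specialized to $f_\wp^\flat$: its non-vanishing modulo $\fP$ is governed by a cyclotomic $p$-adic $L$-function whose non-triviality at $\wp$ is, via Perrin-Riou, essentially the analytic rank-zero statement of Greenberg's conjecture in this case. The Kato system then bounds the Bloch--Kato Selmer group from above by $0$, which is exactly what is needed to conclude that the Abel--Jacobi image over $\Q$ is torsion.

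The principal obstacle is transferring non-triviality from one weight to all but finitely many weights simultaneously. Pointwise non-triviality of Heegner cycles or Kato classes is comparatively well understood, but proving it uniformly as $\wp$ varies requires a $\Lambda$-adic rigidity statement, which hinges on Fischman's big image theorem (Theorem \ref{fischman-thm}) and the residual non-solvability produced in \S \ref{lambda-subsec}. Even granted these ingredients, a fully unconditional proof of Conjecture \ref{main-conj} would contain Greenberg's conjecture itself, which remains open in weight larger than $2$; this is why the paper proposes the statement as a conjecture and instead delivers Theorem A (weight $2$ elliptic curve case, modulo finiteness of $\Sha_{p^\infty}(A_f/\Q)$) and Theorem B (higher weight rank-one case, modulo the non-degeneracy of Zhang's heights) as the realistic partial results at present.
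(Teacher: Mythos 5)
You correctly read the situation: the statement is a conjecture, and the paper's only ``proof'' is the observation immediately preceding it --- that Conjecture~\ref{main-conj} is what one gets by chaining Greenberg's conjecture (Conjecture~\ref{greenberg-conj}, which would give $r_\an(f_\wp^\flat)=r_{\min}(\hf)$ for almost all $\wp$) with the Birch--Swinnerton-Dyer and Beilinson--Bloch--Kato conjectures (which would give $r_\an(f_\wp^\flat)=r_{\alg,p}(f_\wp^\flat/\Q)$). Your closing paragraph, that an unconditional proof would subsume Greenberg's conjecture itself and that the paper therefore offers Theorems~A and~B (\emph{i.e.}, Corollaries~\ref{sha-coro} and~\ref{sha-coro2}) as partial evidence, is precisely the author's stance.

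Where your sketch of the partial evidence departs from the paper's actual arguments: in the rank-$0$ case ($\varepsilon(\hf)=+1$) you reach for Kato's Euler system and Perrin-Riou's $p$-adic $L$-function machinery, whereas Section~\ref{sha-sec2} deliberately avoids $p$-adic $L$-functions. The paper instead reuses Heegner cycles over a \emph{different} imaginary quadratic field $K'$, chosen (via Bump--Friedberg--Hoffstein/Murty--Murty) so that $r_\an(f\otimes\chi_{K'})=1$, and exploits Nekov\'a\v{r}'s sign formula \eqref{tau-eq2}: since $\varepsilon(f_\wp^\flat)=+1$, complex conjugation acts as $-1$ on $y_{\wp,K'}$, so the non-torsion cycle sits entirely in the $-$-eigenspace, and the upper bound $r_{\alg,p}(f_\wp^\flat/K')=1$ supplied by Theorem~\ref{nekovar-thm} squeezes $r_{\alg,p}(f_\wp^\flat/\Q)$ to $0$. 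Remark~\ref{mk-rem} records that the $p$-adic-$L$-function route you propose (via Mazur--Kitagawa) would also work, but stresses that bypassing it is one of the points of the argument. Also, in the rank-$1$ case your lower-bound step invokes Zhang's height formula and ``Greenberg for the twist''; neither is used for the \emph{algebraic} rank statement. Theorem~\ref{sha-thm} needs only the non-torsionness of $y_{\wp,K}$ (Theorem~\ref{non-torsion-prop}) and the $+$-eigenspace computation \eqref{tau-eq}; Zhang's formula, and hence Assumption~\ref{main-ass3}, enter only for the \emph{analytic} rank result Theorem~\ref{main-thm}, which is logically independent of the Shafarevich--Tate theorems.
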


Later we shall prove results (Corollaries \ref{sha-coro} and \ref{sha-coro2}) in the direction of this conjecture, which can be formulated for all Hida families, not only those of square-free tame level.

Now resume the notation $g=f_\wp^\flat$. Let $D_K$ be the discriminant of $K$. Using the Heegner points $x_c\in X_0(N)(K_c)$ from \S \ref{heegner-subsec}, Nekov\'a\v{r} defined in \cite{Nek} a collection of Heegner cycles 
\[ y_{\wp,c}\in\Lambda_{g,\fP}(K_c)\subset H^1\bigl(K_c,T^\dagger_g\bigr) \]
indexed on the integers $c\geq1$ such that $(c,ND_Kp)=1$ (see \cite[\S 3.1]{LV} and \cite[\S 5]{Nek} for details). Define the $K$-rational Heegner cycle
\begin{equation} \label{y-K-eq}
y_{\wp,K}\defeq\cores_{K_1/K}(y_{\wp,1})\in\Lambda_{g,\fP}(K)\subset H^1\bigl(K,T^\dagger_g\bigr),
\end{equation}
which is a higher weight counterpart of the point $\alpha_K$ in \eqref{alpha-K-eq}.

In the next theorem, $\Sha_\fP(g/K)$ is the $\fP_g$-primary Shafarevich--Tate group of $g$ over $K$, whose definition will be recalled in \eqref{sha-def-eq} below.

\begin{theorem}[Nekov\'a\v{r}] \label{nekovar-thm}
If $y_{\wp,K}$ is non-torsion, then
\begin{enumerate}
\item $\Lambda_{g,\fP}(K)\otimes_\Z\Q=\Q_{g,\fP}\cdot y_{\wp,K}$;
\vskip 1mm
\item $\Sha_\fP(g/K)$ is finite.
\end{enumerate}
\end{theorem}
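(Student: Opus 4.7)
The plan is to invoke Kolyvagin's Euler system method in the form developed by Nekov\'a\v{r} in \cite{Nek}, where it is adapted from Heegner points on modular elliptic curves to Heegner cycles on Kuga--Sato varieties. The essential input is that the collection $\{y_{\wp,c}\}$, as $c$ runs over positive integers coprime to $ND_Kp$, forms an Euler system in the cohomology of $T_g^\dagger$: for every rational prime $\ell\nmid cND_Kp$ inert in $K$, one has a norm relation of the form $\cores_{K_{c\ell}/K_c}(y_{\wp,c\ell})=P_\ell(\Frob_\ell)\cdot y_{\wp,c}$ where $P_\ell$ is the Hecke polynomial \eqref{hecke-eq} acting on $T_g^\dagger$. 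This relation, together with the compatibility $y_{\wp,K}=\cores_{K_1/K}(y_{\wp,1})$ of \eqref{y-K-eq}, is the backbone of the argument.

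First, I would select a plentiful set of \emph{Kolyvagin primes} $\ell$, namely rational primes inert in $K$, coprime to $ND_Kp$, and such that reduction modulo a prescribed power $\pi_g^n$ makes $\Frob_\ell-1$ behave like a well-controlled operator on $T_g^\dagger/\pi_g^n T_g^\dagger$. Their abundance is guaranteed by Chebotarev combined with the ``big image'' and irreducibility input provided by Proposition \ref{non-solvable-prop} and Corollary \ref{solvable-coro} applied over the solvable extension $K/\Q$: these ensure that $\bar T_g^\dagger$ is absolutely irreducible, non-solvable, and has no $G_K$-invariants, which is exactly the Galois-theoretic setting in which Kolyvagin's derivative operators $D_c=\prod_{\ell\mid c}D_\ell$ produce well-defined classes $\kappa_c\in H^1(K,T_g^\dagger/\pi_g^{n(c)}T_g^\dagger)$. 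For $c=1$ one recovers the reduction of $y_{\wp,K}$ itself; for general squarefree $c$ one obtains classes whose local behavior is of ``finite'' type outside $c$ and of controlled ``singular'' type at primes dividing $c$.

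Second, I would feed these classes into global Poitou--Tate duality. Writing $W_g^\dagger=V_g^\dagger/T_g^\dagger$ and letting $\Sel^\vee_\fP(g/K)$ denote the Bloch--Kato Selmer group of $W_g^\dagger$ over $K$, the Euler system argument bounds the $\pi_g$-corank of $\Sel^\vee_\fP(g/K)$ by the $\pi_g$-divisibility index of $y_{\wp,K}$ inside $H^1(K,T_g^\dagger)$. Torsion-freeness of $H^1(K,T_g^\dagger)$, supplied by Corollary \ref{no-torsion-coro}, together with the hypothesis that $y_{\wp,K}$ is non-torsion, guarantees that this divisibility index is finite, hence that $\Sel^\vee_\fP(g/K)$ is cofinitely generated of corank at most $1$. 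Matching ranks via the Bloch--Kato version of the Cassels--Tate exact sequence then forces $H^1_f(K,T_g^\dagger)$ to have $\cO_{g,\fP}$-rank equal to $1$, generated up to torsion by $y_{\wp,K}$. Since $\Lambda_{g,\fP}(K)$ is an $\cO_{g,\fP}$-submodule of $H^1_f(K,T_g^\dagger)$ containing $y_{\wp,K}$, assertion (1) follows, and the quotient $\Sel^\vee_\fP(g/K)/(\text{Heegner part})$, being finite, yields the finiteness of $\Sha_\fP(g/K)$ in (2).

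The main technical obstacle is precisely the ``big image'' condition needed to make Kolyvagin primes exist in enough density to absorb every cohomology class one wishes to annihilate; in our Hida-family setting this is exactly what the non-solvability results of Section \ref{weight-2-sec} were set up to provide, but one must be careful because $\bar T_g^\dagger$ is a twist of $\bar T_g$ and its restriction to $G_K$ (not merely to $G_\Q$) must remain irreducible with sufficiently large image. Corollary \ref{solvable-coro} handles the $G_K$-invariants, while the $p$-distinguished hypothesis of Lemma \ref{distinguished-lemma} is what ultimately allows the local conditions at $p$ to be analyzed within the ordinary framework. Once these ingredients are in place, the Kolyvagin/Nekov\'a\v{r} machinery runs verbatim and delivers both conclusions simultaneously.
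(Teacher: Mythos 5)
The paper's proof of this theorem is a single-line citation: ``This is \cite[Theorem 13.1]{Nek}.'' Your proposal, by contrast, unpacks the Euler system argument that underlies that theorem. The sketch you give is essentially accurate — the norm relations for Heegner cycles, the construction of Kolyvagin derived classes, the reliance on Chebotarev together with big-image and absolute irreducibility to produce enough Kolyvagin primes, the use of Poitou--Tate duality to bound the Selmer corank, and the final step identifying $\Lambda_{g,\fP}(K)\otimes_\Z\Q$ with $\Q_{g,\fP}\cdot y_{\wp,K}$ — is indeed the content of Nekov\'a\v{r}'s Theorem~13.1 (building on his Proposition~12.6 and the Kolyvagin-type annihilation results of his \S\S~8--12). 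What the proposal buys over the paper's approach is an explicit accounting of which of the paper's own Galois-theoretic inputs (Lemma~\ref{distinguished-lemma}, Proposition~\ref{non-solvable-prop}, Corollaries~\ref{solvable-coro} and~\ref{no-torsion-coro}) feed into the hypotheses that Nekov\'a\v{r} needs; what it misses is the delicate point the paper flags immediately after the theorem, namely that Nekov\'a\v{r}'s original argument was carried out under the hypothesis $p\nmid 2N(k-2)!\varphi(N)$, which is unusable when $k$ grows along a Hida family, and that this must be relaxed to $p\nmid 2N$ via \cite[\S 6.5]{Nek2} before the theorem can be applied at arbitrary arithmetic primes $\wp$. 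Since the entire point of invoking the theorem in this paper is to vary $k_\wp$ over an unbounded set, this is not an optional refinement: a proof that did not address it would silently exclude all but finitely many specializations. Adding one sentence invoking \cite[\S 6.5]{Nek2} would make your sketch fully consistent with the paper's use of the result.
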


\begin{proof} This is \cite[Theorem 13.1]{Nek}.  \end{proof}

Of course, part (1) implies that $r_{\alg,\fP}(g/K)=1$.

\begin{remark} 
To be precise, Nekov\'a\v{r} proved Theorem \ref{nekovar-thm} in \cite{Nek} under the assumption that $p\nmid2N(k-2)!\varphi(N)$, which is clearly unsuitable for applications to $p$-adic analytic families of modular forms of varying weight. However, as pointed out in \cite[\S 6.5]{Nek2}, the assumption on $p$ can be relaxed by simply requiring that $p\nmid2N$. 
\end{remark}

\begin{remark}
If $y_{\wp,K}$ is non-torsion, then one can describe the structure of the finite $\cO_{\Q_g,\fP}$-module $\Sha_{g,\fP}(g/K)$ in terms of the collection ${\{y_{\wp,c}\}}_{c\geq1}$ of Heegner cycles (\cite[Theorem 7.3]{Masoero}).
\end{remark}

For completeness, we give the counterpart of Definition \ref{algebraic-rank-def} in weight $2$. Again, let $L$ be a number field.

\begin{definition} \label{algebraic-rank-def2}
The \emph{algebraic $\fP$-rank of $f$ over $L$} is $r_{\alg,\fP}(f/L)\defeq\rank_{\cO_{f,\p}}\bigl(A_f(L)\otimes_{\cO_f}\cO_{f,\p}\bigr)$.
\end{definition}

In particular, if $A_f$ is an elliptic curve (\emph{i.e.}, $a_n(f)\in\Q$ for all $n\geq1$), then $r_{\alg,\fP}(f/L)=\rank_{\Z_p}\bigl(A_f(L)\otimes_\Z\Z_p\bigr)=\rank_\Z A_f(L)$.

\subsection{Comparison of Abel--Jacobi images over $\Q$ and over $K$} \label{AJ-subsec}

Let us keep the notation of \S \ref{cycles-subsec}. We want to show that, under our running assumptions, there is an isomorphism of $\cO_{g,\fP}$-modules
\[ \Lambda_{g,\fP}(\Q)\overset\simeq\longrightarrow{\Lambda_{g,\fP}(K)}^{\Gal(K/\Q)}. \]
This map is canonically induced by restriction; later on, it will be regarded as an equality. 

We begin by showing that restriction gives rise to an injection between the Abel--Jacobi images we are interested in.

\begin{proposition} \label{lambda-prop1}
Restriction induces an injection of $\cO_{\Q_g,\fP}$-modules
\[ \Lambda_{g,\fP}(\Q)\longmono{\Lambda_{g,\fP}(K)}^{\Gal(K/\Q)}. \]
\end{proposition}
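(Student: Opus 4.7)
The plan is to factor the alleged injection as the composition of two maps: first, a map $\Lambda_{g,\fP}(\Q) \to \Lambda_{g,\fP}(K)^{\Gal(K/\Q)}$ induced by functoriality of the \'etale Abel--Jacobi map under the base change $\Q \hookrightarrow K$, and second, the natural inclusion into $H^1(K, T_g^\dagger)$. To get injectivity, I will then show that even the composition $\Lambda_{g,\fP}(\Q) \hookrightarrow H^1(\Q, T_g^\dagger) \xrightarrow{\res_{K/\Q}} H^1(K, T_g^\dagger)$ is injective, which suffices.

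For the first step, a cycle $Z \in \CH^{k/2}(\tilde\E_N^{k-2}/\Q)_0$ base-changes to a cycle $Z_K \in \CH^{k/2}(\tilde\E_N^{k-2}/K)_0$, and compatibility of the \'etale cycle class construction with base change yields a commutative square
\[
\xymatrix@C=30pt{\CH^{k/2}(\tilde\E_N^{k-2}/\Q)_0\otimes_\Z\cO_{g,\fP}\ar[r]^-{\Phi_{g,\fP,\Q}}\ar[d]&H^1(\Q,T_g^\dagger)\ar[d]^-{\res_{K/\Q}}\\
\CH^{k/2}(\tilde\E_N^{k-2}/K)_0\otimes_\Z\cO_{g,\fP}\ar[r]^-{\Phi_{g,\fP,K}}&H^1(K,T_g^\dagger).}
\]
Hence $\res_{K/\Q}$ sends $\Lambda_{g,\fP}(\Q)$ into $\Lambda_{g,\fP}(K)$, and since the image of any restriction map automatically lies in the $\Gal(K/\Q)$-invariants, the first step is complete.

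For injectivity, I appeal to the inflation-restriction sequence
\[
0\longrightarrow H^1\bigl(\Gal(K/\Q),H^0(K,T_g^\dagger)\bigr)\longrightarrow H^1(\Q,T_g^\dagger)\xrightarrow{\res_{K/\Q}}H^1(K,T_g^\dagger)
\]
and show that $H^0(K,T_g^\dagger)=0$. The field $K$ is quadratic, hence solvable, over $\Q$, so by Corollary \ref{solvable-coro} one has $H^0(K,\bar T_g^\dagger)=0$. Setting $M\defeq H^0(K,T_g^\dagger)$, the long exact cohomology sequence associated with
\[
0\longrightarrow T_g^\dagger\xrightarrow{\pi_g}T_g^\dagger\longrightarrow\bar T_g^\dagger\longrightarrow0
\]
gives an injection $M/\pi_g M\hookrightarrow H^0(K,\bar T_g^\dagger)=0$. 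Since $M$ is a submodule of the finitely generated $\cO_{g,\fP}$-module $T_g^\dagger$, Nakayama's lemma forces $M=0$, so the inflation term vanishes and $\res_{K/\Q}$ is injective.

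The proof is thus essentially formal once Corollary \ref{solvable-coro} is in hand; there is no real obstacle, and the main subtlety is only the mild Nakayama-type argument needed to pass from the vanishing of the residual invariants $H^0(K,\bar T_g^\dagger)$ to the vanishing of the integral invariants $H^0(K,T_g^\dagger)$.
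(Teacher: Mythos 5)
Your proof is correct and follows essentially the same strategy as the paper: establish the commutative square relating $\Phi_{g,\fP,\Q}$ and $\Phi_{g,\fP,K}$ via base change (the paper cites the proof of Proposition 6 of Charles for this non-formal compatibility, which you state a bit more loosely as ``compatibility of the \'etale cycle class construction with base change''), observe that restriction lands in the $\Gal(K/\Q)$-invariants, and then obtain injectivity from the inflation--restriction sequence once $H^0\bigl(K,T_g^\dagger\bigr)=0$. The one place where you take a longer path than needed is in establishing that last vanishing: you pass to the residual representation $\bar T_g^\dagger$ and invoke Nakayama's lemma, whereas Corollary \ref{solvable-coro} already gives $H^0\bigl(K,V_g^\dagger\bigr)=0$ directly, and since $T_g^\dagger$ is a $G_K$-stable lattice inside $V_g^\dagger$ one immediately has $H^0\bigl(K,T_g^\dagger\bigr)\subset H^0\bigl(K,V_g^\dagger\bigr)=0$. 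Your Nakayama detour is valid, just unnecessary given what Corollary \ref{solvable-coro} already provides.
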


\begin{proof} Set $\cG\defeq\Gal(K/\Q)$. As explained, \emph{e.g.}, in \cite[\S A.9.17]{BG}, there is a natural $\cO_{\Q_g,\fP}$-linear base change morphism 
\begin{equation} \label{base-change-eq}
\CH^{k/2}\bigl(\tilde\E_N^{k-2}/\Q\bigr)_0\otimes_\Z\cO_{\Q_g,\fP}\longrightarrow\left(\CH^{k/2}\bigl(\tilde\E_N^{k-2}/K\bigr)_0\otimes_\Z\cO_{\Q_g,\fP}\right)^{\!\cG}. 
\end{equation}
By composing \eqref{base-change-eq} with (the restriction of) the Abel--Jacobi map $\Phi_{g,\fP,K}$ in \eqref{AJ-eq}, we get a map
\[ \psi_{\Q,K}:\CH^{k/2}\bigl(\tilde\E_N^{k-2}/\Q\bigr)_0\otimes_\Z\cO_{\Q_g,\fP}\longrightarrow H^1\bigl(K,T^\dagger_g\bigr)^\cG. \]
On the other hand, since $K/\Q$ is solvable, Corollary \ref{solvable-coro} implies that $H^0\bigl(K,T^\dagger_g\bigr)=0$. It follows that restriction induces an isomorphism
\[ \res_{K/\Q}:H^1\bigl(\Q,T^\dagger_g\bigr)\overset\simeq\longrightarrow{H^1\bigl(K,T^\dagger_g\bigr)}^\cG. \] 
Now it turns out (see, \emph{e.g.}, the proof of \cite[Proposition 6]{Charles}) that the composition
\[ \res_{K/\Q}^{-1}\circ\psi_{\Q,K}:\CH^{k/2}\bigl(\tilde\E_N^{k-2}/\Q\bigr)_0\otimes_\Z\cO_{\Q_g,\fP}\longrightarrow H^1\bigl(\Q,T^\dagger_g\bigr) \]
coincides with the Abel--Jacobi map $\Phi_{g,\fP,\Q}$. Since, by definition, $\Lambda_{g,\fP}(\Q)=\im(\Phi_{g,\fP,\Q})$ and $\Lambda_{g,\fP}(K)=\im(\Phi_{g,\fP,K})$, we obtain a natural injection of $\cO_{\Q_g,\fP}$-modules
\[ \Lambda_{g,\fP}(\Q)\longmono{\Lambda_{g,\fP}(K)}^\cG,\quad x\longmapsto\res_{K/\Q}(x), \]
as desired. \end{proof}

\begin{lemma} \label{res-onto-lemma}
The restriction
\[ \left(\CH^{k/2}\bigl(\tilde\E_N^{k-2}/K\bigr)_0\otimes_\Z\cO_{\Q_g,\fP}\right)^{\!\Gal(K/\Q)}\longrightarrow{\Lambda_{g,\fP}(K)}^{\Gal(K/\Q)} \]
of the Abel--Jacobi map $\Phi_{g,\fP,K}$ is surjective.
\end{lemma}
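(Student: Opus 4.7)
The plan is to exploit the fact that $\cG\defeq\Gal(K/\Q)$ has order $2$ and, since every $p\in\Xi_f$ satisfies $p\nmid 210N$ (and in particular $p$ is odd), $|\cG|=2$ is a unit in $\cO_{g,\fP}$. A standard averaging argument will then produce a $\cG$-invariant preimage for every element of $\Lambda_{g,\fP}(K)^\cG$.

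More precisely, first I would record the (essentially formal) fact that the Abel--Jacobi map
\[ \Phi_{g,\fP,K}:\CH^{k/2}\bigl(\tilde\E_N^{k-2}/K\bigr)_0\otimes_\Z\cO_{g,\fP}\longrightarrow H^1\bigl(K,T_g^\dagger\bigr) \]
is $\cG$-equivariant, where $\cG$ acts on $K$-rational cycle classes (extended $\cO_{g,\fP}$-linearly on the left) via its natural action on $\tilde\E_N^{k-2}\otimes_\Q K$, and on $H^1\bigl(K,T_g^\dagger\bigr)$ in the usual way coming from the extension $1\to G_K\to G_\Q\to\cG\to1$. This equivariance is built into Nekov\'a\v{r}'s construction of $\Phi_{g,\fP,K}$ as a composite of a cycle class map with a connecting homomorphism in \'etale cohomology, both of which are functorial with respect to the automorphisms of the base field.

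Given $\lambda\in\Lambda_{g,\fP}(K)^\cG$, I would pick any lift $z\in\CH^{k/2}\bigl(\tilde\E_N^{k-2}/K\bigr)_0\otimes_\Z\cO_{g,\fP}$ with $\Phi_{g,\fP,K}(z)=\lambda$ and let $\sigma$ denote the non-trivial element of $\cG$. The element
\[ z'\defeq\tfrac{1}{2}(z+\sigma z) \]
makes sense in $\CH^{k/2}\bigl(\tilde\E_N^{k-2}/K\bigr)_0\otimes_\Z\cO_{g,\fP}$ because $2\in\cO_{g,\fP}^\times$, it is manifestly fixed by $\sigma$, and by the equivariance of $\Phi_{g,\fP,K}$ together with the $\cG$-invariance of $\lambda$ one has
\[ \Phi_{g,\fP,K}(z')=\tfrac{1}{2}\bigl(\lambda+\sigma\lambda\bigr)=\lambda. \]
This produces the required preimage in $\bigl(\CH^{k/2}(\tilde\E_N^{k-2}/K)_0\otimes_\Z\cO_{g,\fP}\bigr)^\cG$.

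The only genuinely non-trivial point is the $\cG$-equivariance of the Abel--Jacobi map; everything else is the elementary averaging manoeuvre that is available precisely because $|\cG|=2$ is invertible in $\cO_{g,\fP}$. I expect the equivariance to be either stated in the references used in \S\ref{cycles-subsec} (in particular in \cite{Nek}) or to be a direct consequence of the functoriality of \'etale cohomology under the Galois action; in any case it will not require additional hypotheses beyond those already in force. Note also that the argument does not rely on surjectivity of the base change morphism in \eqref{base-change-eq}, which would be a much more delicate statement.
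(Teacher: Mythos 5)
Your proof is correct and is essentially the same as the paper's, just unwound: the paper passes to the long exact sequence in $\Gal(K/\Q)$-cohomology attached to the short exact sequence $0\to\ker(\Phi_{g,\fP,K})\to\CH^{k/2}\bigl(\tilde\E_N^{k-2}/K\bigr)_0\otimes_\Z\cO_{g,\fP}\to\Lambda_{g,\fP}(K)\to0$ of $\cO_{g,\fP}[\Gal(K/\Q)]$-modules and notes that $H^1$ of the order-$2$ group $\Gal(K/\Q)$ is $2$-torsion, hence zero since $p$ is odd, while your averaging by $\tfrac12(1+\sigma)$ is precisely the explicit witness of that cohomological vanishing. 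Both arguments rest on the same two facts you correctly isolate, namely the $\Gal(K/\Q)$-equivariance of $\Phi_{g,\fP,K}$ (which the paper takes for granted by declaring the sequence to be one of $\cO_{g,\fP}[\Gal(K/\Q)]$-modules) and the invertibility of $2$ in $\cO_{g,\fP}$.
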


\begin{proof} For short, set $\cG\defeq\Gal(K/\Q)$, $\CH_K\defeq\CH^{k/2}\bigl(\tilde\E_N^{k-2}/K\bigr)_0\otimes_\Z\cO_{\Q_g,\fP}$, $\Lambda_K\defeq\Lambda_{g,\fP}(K)$ and $\Phi_K\defeq\Phi_{g,\fP,K}$. The tautological short exact sequence of $\cO_{\Q_g,\fP}[\cG]$-modules
\[ 0\longrightarrow\ker(\Phi_K)\longrightarrow\CH_K\xrightarrow{\Phi_K}\Lambda_K\longrightarrow0 \]
yields a long exact sequence
\begin{equation} \label{long-Phi-eq}
0\longrightarrow H^0\bigl(\cG,\ker(\Phi_K)\bigr)\longrightarrow H^0(\cG,\CH_K)\xrightarrow{\Phi_K}H^0(\cG,\Lambda_K)\longrightarrow H^1\bigl(\cG,\ker(\Phi_K)\bigr)\longrightarrow\dots 
\end{equation}
in Galois cohomology. Since $\#\cG=2$, the $\cO_{\Q_g,\fP}$-module $H^1\bigl(\cG,\ker(\Phi_K)\bigr)$ is $2$-torsion, hence $H^1\bigl(\cG,\ker(\Phi_K)\bigr)=0$ because $p$ is odd. It follows from \eqref{long-Phi-eq} that the restriction of $\Phi_K$ to $\CH_K^\cG$ surjects onto $\Lambda_K^\cG$, as was to be shown. \end{proof}

We are in a position to prove

\begin{proposition} \label{lambda-prop2}
Restriction induces an isomorphism of $\cO_{g,\fP}$-modules
\[ \Lambda_{g,\fP}(\Q)\overset\simeq\longrightarrow{\Lambda_{g,\fP}(K)}^{\Gal(K/\Q)}. \]
\end{proposition}

\begin{proof} Retain the notation introduced above; set also $\CH_\Q\defeq\CH^{k/2}\bigl(\tilde\E_N^{k-2}/\Q\bigr)_0\otimes_\Z\cO_{\Q_g,\fP}$, $\Lambda_\Q\defeq\Lambda_{g,\fP}(\Q)$, $\Phi_\Q\defeq\Phi_{g,\fP,\Q}$. Since $[K:\Q]=2$ and $p$ is odd, base change gives an isomorphism $\CH_\Q\simeq\CH_K^\cG$ of $\cO_{g,\fP}$-modules (\emph{cf.} \cite[\S 4.1.2]{LV-tamagawa}). On the other hand, there is a commutative square
\[
\xymatrix@C=35pt@R=30pt{\CH_\Q\ar@{->>}[r]^-{\Phi_\Q}\ar[d]^-\simeq&\Lambda_\Q\ar@{^{(}->}[d]^-{\res_{K/\Q}}\\ \CH_K^\cG\ar@{->>}[r]^-{\Phi_K}&\Lambda_K^\cG}
\]
in which the lower horizontal map is surjective by Lemma \ref{res-onto-lemma} and the injectivity of $\res_{K/\Q}$ is the content of Proposition \ref{lambda-prop1}. We conclude that $\res_{K/\Q}$ is necessarily surjective, hence an isomorphism. \end{proof}

\begin{remark} \label{lambda-inj-rem}
By Corollary \ref{no-torsion-coro}, $\Lambda_{g,\fP}(\Q)$ and $\Lambda_{g,\fP}(K)$ are torsion-free, therefore there are canonical $\cO_{\Q_g,\fP}$-linear injections $\Lambda_{g,\fP}(\Q)\hookrightarrow\Lambda_{g,\fP}(\Q)\otimes_\Z\Q$ and $\Lambda_{g,\fP}(K)\hookrightarrow\Lambda_{g,\fP}(K)\otimes_\Z\Q$. 
\end{remark}

\subsection{Big Heegner points and their specializations} \label{big-subsec}

As before, let $\hf\in\mathcal R[\![q]\!]$ be the $p$-adic Hida family through $f$ and let $K$ be the imaginary quadratic field that was chosen in \S \ref{heegner-subsec}. In his article \cite{Howard-Inv} on the variation of Heegner points, Howard introduced certain Galois cohomology classes $P_c\in H^1(K_c,\T^\dagger)$ called \emph{big Heegner points}. Here $c\geq1$ can be any positive integer coprime to $N$ and, as in \S \ref{cycles-subsec}, $K_c$ is the ring class field of $K$ of conductor $c$. The construction of the classes $P_c$, which interpolate Kummer images of Heegner points on abelian varieties of $\GL_2$ type attached to weight $2$ cusp forms, is quite intricate and, in fact, immaterial for our goals; the reader is referred to \cite[\S 2.2]{Howard-Inv}, \cite[\S 4.2]{CasHeeg} and \cite[Section 3]{Ota-JNT} for details.

Let $\wp$ be an arithmetic prime of $\mathcal R$ of weight $k_\wp\equiv2\pmod{2(p-1)}$ and trivial character; fix an integer $c\geq1$ coprime to $ND_Kp$. It is natural to consider $\sspp^\wp_{0,K_c}(P_c)\in H^1\bigl(K_c,T^\dagger_{f_\wp}\bigr)$, where the specialization map $\sspp^\wp_{0,K_c}$ is as in \eqref{specialization-eq5}, and compare it to (the Kummer image of) the point $\alpha_c$ of \S \ref{heegner-subsec} (respectively, the cycle $y_{\wp,c}$ of \S \ref{cycles-subsec}) if $k_\wp=2$ (respectively, $k_\wp>2$). The specialization results that we are interested in have been obtained by Castella (\cite{CasHeeg}) and by Ota (\cite{Ota-JNT}).

\begin{remark}
We identify (big) Heegner points and Heegner cycles with their images via the natural group homomorphisms
\[ H^1(L,M)\longrightarrow H^1(L,M\otimes\bar\Z_p),\quad H^1(L,N)\longrightarrow H^1(L,N\otimes\bar\F_p) \]
where $M\in\bigl\{\Ta_\p(A_f),T^\dagger_f,\T^\dagger,T^\dagger_{f_\wp^\flat}\bigr\}$, $N\in\bigl\{A_f[\p],\bar T^\dagger_f,\bar\T,\bar T^\dagger_{f_\wp^\flat}\bigr\}$ and $L$ is either $K$ or a ring class field of $K$.
\end{remark}

For any number field $L$, let 
\begin{equation} \label{xi-eq}
\xi_{f,L}:H^1\bigl(L,T^\dagger_f\bigr)\overset\simeq\longrightarrow H^1\bigl(L,\Ta_\p(A_f)\bigr)
\end{equation}
be the isomorphism induced, thanks to our choice of the lattice $T^\dagger_f$ in weight $2$, by \eqref{V-2-eq} via the Weil pairing. Moreover, let $\delta_{f,L}:A_f(L)\rightarrow H^1\bigl(L,\Ta_\p(A_f)\bigr)$ be the Kummer map in \eqref{pi-f-L-eq2}.

\begin{theorem}[Castella, Ota] \label{c-o-thm}
There exist $d=d(c)\in\bar\Z_p^\times$ and $e=e(c)\in\bar\Z_p^\times$ such that 
\begin{enumerate}
\item $\xi_{f,K_c}\bigl(\sspp^{\wp_2}_{0,K_c}(P_c)\bigr)=d\cdot\delta_{f,K_c}(\alpha_c)$;
\vskip 1mm
\item $\sspp^\wp_{0,K_c}(P_c)=e\cdot y_{\wp,c}$.
\end{enumerate}   
\end{theorem}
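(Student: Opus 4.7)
The plan is to establish both parts by invoking and organizing the specialization theorems of Castella \cite{CasHeeg} and of Ota \cite{Ota-JNT}. Both statements are instances of a single interpolation philosophy: specialization of Howard's big class $P_c$ at an arithmetic prime $\wp$ recovers, up to a unit in $\bar\Z_p^{\times}$, the classical Heegner-type class attached to $f^\flat_\wp$, whose geometric nature depends on the weight.

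For part (1), the argument runs as follows. Howard constructs $P_c$ as a Galois-equivariant combination of Kummer images of CM points $x_c^{(n)}\in X_0(Np^n)(K_c[p^n])$ arranged along the ordinary tower, passed through Hida's ordinary projector. Under the distinguished specialization map $\sspp^{\wp_2}_0$, the critical twist $\T^\dagger$ collapses (after a finite base change) onto $T^\dagger_f$, and our fixed identification \eqref{V-2-eq} between $T^\dagger_f$ and $\Ta_\p(A_f)$ is precisely the Weil-pairing isomorphism $\xi_{f,K_c}$. The compatible system of Kummer classes in the Tate modules of the Jacobians $J_0(Np^n)$ specializes in weight $2$ to the Kummer image of the trace $\alpha_c\in A_f(K_c)$, giving $\xi_{f,K_c}\bigl(\sspp^{\wp_2}_{0,K_c}(P_c)\bigr)=d\cdot\delta_{f,K_c}(\alpha_c)$. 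The unit $d=d(c)\in\bar\Z_p^{\times}$ records the residual freedom in the distinguished isomorphism \eqref{specialization-eq4} at $\wp_2$; its precise value is unimportant for the applications, only its non-vanishing matters.

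For part (2), Ota's strategy replaces $A_f$ by the Kuga--Sato variety $\tilde\E_N^{k_\wp-2}$ on which Nekov\'a\v{r}'s Heegner cycle $y_{\wp,c}$ lives. The key input is a comparison between Hida's ordinary $\Lambda$-adic \'etale cohomology (through which $P_c$ is defined) and the \'etale cohomology of (the compactification of) $\tilde\E_N^{k_\wp-2}$ with coefficients in $\Sym^{k_\wp-2}$, which underlies the Abel--Jacobi map \eqref{AJ-eq}. The congruence $k_\wp\equiv2\pmod{2(p-1)}$ ensures that the critical character is trivial modulo $\m_\mathcal R$ (see \eqref{T-T-dagger-eq}), and the distinguished specialization map $\sspp^\wp_0$ of \cite[\S 2.6]{Ota-JNT} is calibrated so that this comparison sends the universal class $P_c$ to the \'etale Abel--Jacobi image of $y_{\wp,c}$ up to a unit $e=e(c)\in\bar\Z_p^{\times}$.

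The main obstacle is a careful bookkeeping of integral normalizations across three disparate frameworks: (i) the ordinary part of the \'etale cohomology of the tower $\{X_1(Np^n)\}$, through which $\T^\dagger$ is built, (ii) Nekov\'a\v{r}'s lattice $T^\dagger_g\subset V^\dagger_g$ fixed in \S \ref{reduction-subsec}, and (iii) the geometric Hecke-theoretic projector on $\tilde\E_N^{k_\wp-2}$ that intertwines the two. Matching these up to a $p$-adic unit (rather than up to an arbitrary non-zero scalar) is precisely the technical heart of Castella's and Ota's works, and is the reason why the constants $d(c)$ and $e(c)$ can be shown to be \emph{units} in $\bar\Z_p$ but not, in general, computed explicitly.
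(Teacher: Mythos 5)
Your proposal is correct and takes essentially the same approach as the paper, which simply cites \cite[Theorem 6.5 and Remark 6.6]{CasHeeg} for parts (2) and (1) respectively and notes \cite[Theorem 1.2]{Ota-JNT} for the refinement under weaker hypotheses, observing that the relevant constant is a $p$-adic unit. Your exposition of the underlying strategy is accurate in spirit, though the paper's follow-up remark emphasizes that both Castella and Ota actually obtain the comparison as a corollary of results on the Bertolini--Darmon--Prasanna generalized Heegner cycles rather than by a direct comparison of the two \'etale cohomologies, a small intermediary your summary omits.
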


\begin{proof} Part (2) is \cite[Theorem 6.5]{CasHeeg}, and part (1) follows by the same arguments (see \cite[Remark 6.6]{CasHeeg}). Observe that, in our setting, the constant appearing in \cite[Theorem 6.5]{CasHeeg} is a $p$-adic unit. See also \cite[Theorem 1.2]{Ota-JNT} for a refinement of the main result of \cite{CasHeeg} that works under our assumptions, which are slightly weaker than those in \cite{CasHeeg}. \end{proof}

\begin{remark} \label{c-o-rem}
Castella proved Theorem \ref{c-o-thm} in \cite{CasHeeg} under the extra assumption that ${\bar\rho_{f,\p}|}_{G_K}$ is irreducible: we shall elaborate on this condition in Appendix \ref{appendix}. Note, however, that the main theorem of \cite{Ota-JNT} does not require this condition.
\end{remark}

\begin{remark}
Theorem \ref{c-o-thm} was proved by Castella and by Ota as a corollary of results (see \cite[Theorem 1.2]{Ota-JNT}) on the generalized Heegner cycles of Bertolini--Darmon--Prasanna (\cite{BDP}).
\end{remark}

\begin{remark}
The analogue of part (1) of Theorem \ref{c-o-thm} when $p$ divides the conductor of $f$, which is irrelevant to our paper, was essentially proved (albeit somewhat in disguise) by Howard in \cite[Section 3]{Howard-derivatives}. The reader is referred to \cite[\S 3.17.2]{LV-tamagawa} for details.
\end{remark}

For the reader's benefit, the following proposition collects all isomorphisms that will be needed in the proof of Theorem \ref{non-torsion-prop}.

\begin{proposition} \label{all-iso-prop}
Let $L$ be a number field and let $\wp$ be an arithmetic prime of $\mathcal R$ of weight $k_\wp\equiv2\pmod{2(p-1)}$ and trivial character. There are group isomorphisms
\begin{enumerate}
\item $\bsspp^\wp_{0,L}:H^1(L,\bar\T)\overset\simeq\longrightarrow H^1\Bigl(L,\bar T^\dagger_{f_\wp^\flat}\Bigr)$;
\item $\xi_{f,L}:H^1\bigl(L,T^\dagger_f\bigr)\overset\simeq\longrightarrow H^1\bigl(L,\Ta_\p(A_f)\bigr)$;
\item $\bar\xi_{f,L}:H^1\bigl(L,\bar T^\dagger_f\bigr)\overset\simeq\longrightarrow H^1\bigl(L,A_f[\p]\bigr)$.
\end{enumerate}
\end{proposition}

\begin{proof} The isomorphism in (1) was introduced in \eqref{specialization-eq5}, whereas the isomorphism in (2) is \eqref{xi-eq}. Finally, the isomorphism in (3) is induced by $\xi_{f,L}$ via reduction modulo $\p$. \end{proof}

Let $\wp$ be an arithmetic prime of $\mathcal R$ with trivial character and weight $k_\wp>2$ such that $k_\wp\equiv2\pmod{2(p-1)}$. Recall the point $\alpha_K$ in \eqref{alpha-K-eq} and the cycle $y_{\wp,K}$ in \eqref{y-K-eq}. The next result, for which Assumption \ref{main-ass} is in force, will play a key role in the proofs of our theorems on the arithmetic of Hida families; in what follows, we will freely use the isomorphisms gathered in Proposition \ref{all-iso-prop}.

\begin{theorem} \label{non-torsion-prop}
The Heegner cycle $y_{\wp,K}$ is non-torsion over $\cO_{f_\wp^\flat,\fP}$.
\end{theorem}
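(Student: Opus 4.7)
The plan is to realize $y_{\wp,K}$ as the specialization at $\wp$ (up to a $p$-adic unit) of a universal big Heegner class $P_K$ in Howard's $\mathcal R$-adic cohomology, whose non-triviality is detected via its weight $2$ specialization at $\wp_2$ using the choice of $p\in\Xi_f$. More precisely, I would first set
\[ P_K\defeq\cores_{K_1/K}(P_1)\in H^1\bigl(K,\T^\dagger\bigr),\qquad\bar P_K\defeq\pi_{\mathcal R,K}(P_K)\in H^1(K,\bar\T), \]
and by applying $\cores_{K_1/K}$ to both parts of Theorem \ref{c-o-thm} with $c=1$ (using functoriality of corestriction with respect to the specialization maps $\sspp^\wp_{0,K_1}$, $\sspp^{\wp_2}_{0,K_1}$ and the isomorphism $\xi_{f,K_1}$), obtain
\[ \sspp^\wp_{0,K}(P_K)=e\cdot y_{\wp,K},\qquad\xi_{f,K}\bigl(\sspp^{\wp_2}_{0,K}(P_K)\bigr)=d\cdot\delta_{f,K}(\alpha_K), \]
for some $d,e\in\bar\Z_p^\times$.

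The next step is to show that $\bar P_K\neq 0$ by feeding the second identity into the commutative square \eqref{specialization-square2} at $\wp=\wp_2$. After identifying $T^\dagger_{f_{\wp_2}^\flat}=T_f^\dagger$ with $\Ta_\p(A_f)$ via $\xi_f$, the mod $\p$ reduction $\pi_{\wp_2,K}\circ\xi_{f,K}\circ\sspp^{\wp_2}_{0,K}$ applied to $P_K$ becomes a unit multiple of $\bar\delta_{f,K}(\bar\alpha_K)$, which is non-zero by \eqref{nonzero-eq} (our choice of $p\in\Xi_f$ via Proposition \ref{finite-number-prop} ensures precisely this). Since $\bsspp^{\wp_2}_{0,K}$ is an isomorphism and the diagram commutes, this forces $\bar P_K\neq 0$ in $H^1(K,\bar\T)$.

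Now I would transport this non-vanishing forward to weight $k_\wp$ using the commutative square \eqref{specialization-square2} at the given $\wp$:
\[ \bsspp^\wp_{0,K}(\bar P_K)=\pi_{\wp,K}\bigl(\sspp^\wp_{0,K}(P_K)\bigr)=e\cdot\pi_{\wp,K}(y_{\wp,K}). \]
Because $\bsspp^\wp_{0,K}$ is an isomorphism and $e\in\bar\Z_p^\times$, we conclude $\pi_{\wp,K}(y_{\wp,K})\neq 0$ in $H^1\bigl(K,\bar T^\dagger_{f_\wp^\flat}\bigr)$; in particular $y_{\wp,K}\notin\pi_{f_\wp^\flat}\cdot H^1\bigl(K,T^\dagger_{f_\wp^\flat}\bigr)$, so $y_{\wp,K}\neq 0$. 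Finally, since $K/\Q$ is abelian, hence solvable, Corollary \ref{no-torsion-coro} guarantees that $H^1\bigl(K,T^\dagger_{f_\wp^\flat}\bigr)$ is torsion-free over $\cO_{f_\wp^\flat,\fP}$, and a non-zero element of a torsion-free module is non-torsion.

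The main technical obstacle is the bookkeeping of compatibilities: I need that corestriction commutes with the Ota-type specialization map $\sspp^\wp_0$, with $\xi_f$, and with the reduction $\pi_\mathcal R$, and that the square \eqref{specialization-square2} is compatible with the identification in Theorem \ref{c-o-thm}(1) over $\bar\Z_p$ (so that the same reduction process carries $\delta_{f,K}(\alpha_K)$ to $\bar\delta_{f,K}(\bar\alpha_K)$). All of this is formal functoriality of restriction/corestriction and of the quotient maps $T^\dagger\twoheadrightarrow\bar T^\dagger$, but one must check it against the slightly awkward ``finite base change'' clause in \eqref{specialization-V-eq}–\eqref{specialization-eq3}; the unit factors $d,e$ remain units after any such base change, so the argument goes through.
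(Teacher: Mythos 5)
Your proof is correct and follows essentially the same route as the paper: corestrict the big Heegner point $P_1$ to $P_K$, use both parts of Theorem \ref{c-o-thm} and the commutative square \eqref{specialization-square2} to compare the weight-$2$ and weight-$k_\wp$ specializations modulo $\p$, deduce $\pi_{\wp,K}(y_{\wp,K})\neq 0$ from $\bar\delta_{f,K}(\bar\alpha_K)\neq 0$, and conclude with the torsion-freeness of Corollary \ref{no-torsion-coro}. The only cosmetic difference is that you isolate the intermediate class $\bar P_K$ and show it is non-zero before pushing it forward, whereas the paper composes the relevant isomorphisms into a single map $\psi_K$ and evaluates it on $\pi_{\wp,K}(y_{\wp,K})$ directly; the two bookkeepings are logically equivalent.
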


\begin{proof} Recall the prime $\p$ of $\Q_f$ above $p$ that was fixed in \S \ref{heegner-subsec}; the prime ideal $\fP$ of $\bar\Z$ was chosen so that $\fP\cap\cO_f=\p$. Corestriction is the unique map of cohomological functors $H^*(K_1,-)\rightarrow H^*(K,-)$ that in dimension $0$ coincides with the trace, hence the square  
\[ \xymatrix@R=32pt@C=65pt{A_f(K_1)\ar[d]^-{\tr_{K_1/K}}\ar[r]^-{\pi_{f,K_1}\circ\,\delta_{f,K_1}}&H^1\bigl(K_1,A_f[\p]\bigr)\ar[d]^-{\cores_{K_1/K}}\\
                   A_f(K)\ar[r]^-{\pi_{f,K}\circ\,\delta_{f,K}}&H^1\bigl(K,A_f[\p]\bigr)} \]
is commutative (here $\pi_{f,K}\circ\delta_{f,K}$ and $\pi_{f,K_1}\circ\delta_{f,K_1}$ are as in \eqref{pi-f-L-eq2}). With notation as in \eqref{delta-bar-eq}, it follows that
\begin{equation} \label{cores-delta-eq}
\cores_{K_1/K}\bigl((\pi_{f,K_1}\circ\delta_{f,K_1})(\alpha_1)\bigr)=(\pi_{f,K}\circ\delta_{f,K})(\alpha_K)=\bar\delta_{f,K}(\bar\alpha_K)\not=0, 
\end{equation}
where the nonvanishing on the right is \eqref{nonzero-eq}. As before, let $\wp_2$ be the arithmetic prime of $\mathcal R$ such that $f_{\wp_2}=f^\sharp$, so $f_{\wp_2}^\flat=f$. For any number field $L$, square \eqref{specialization-square2} gives a commutative diagram
\begin{equation} \label{commutative-eq}
\xymatrix@R=32pt@C=42pt{H^1\bigl(L,\Ta_\p(A_f)\bigr)\ar[d]^-{\pi_{f,L}}&H^1\bigl(L,T^\dagger_f\bigr)\ar[d]^-{\pi_{\wp_2,L}}\ar[l]^-\simeq_-{\xi_{f,L}}&H^1(L,\T^\dagger)\ar[r]^-{\sspp^\wp_{0,L}}\ar[l]_-{\sspp^{\wp_2}_{0,L}}\ar[d]^-{\pi_{\mathcal R,L}}&H^1\Big(L,T^\dagger_{f_\wp^\flat}\Big)\ar[d]^-{\pi_{\wp,L}}\\H^1\bigl(L,A_f[\p]\bigr)&H^1\bigl(L,\bar T^\dagger_f\bigr)\ar[l]^-\simeq_-{\bar\xi_{f,L}}&
                   H^1(L,\bar\T)\ar[l]_-{\bsspp^{\wp_2}_{0,L}}^-\simeq&H^1\Big(L,\bar T_{f_\wp^\flat}^\dagger\Big)\ar[l]_-{(\bsspp^\wp_{0,L})^{-1}}^-\simeq} 
\end{equation}
in which the middle and right horizontal maps are those appearing in \eqref{specialization-eq5} and the vertical ones are as in \eqref{pi-f-L-eq}, \eqref{pi-R-L-eq} and \S \ref{distinguished-subsec}. We remark that, although our notation does not reflect this, all characteristic $0$ (respectively, residual) representations in \eqref{commutative-eq} are base changed to $\bar\Z_p$  (respectively, $\bar\F_p$). Now recall the big Heegner point $P_1\in H^1(K_1,\T^\dagger)$ from \S \ref{big-subsec} and define
\[ P_K\defeq\cores_{K_1/K}(P_1)\in H^1(K,\T^\dagger). \]
By part (2) of Theorem \ref{c-o-thm}, $\sspp^\wp_{0,K_1}(P_1)=e\cdot y_{\wp,1}$ for some $e\in\bar\Z_p^\times$. Since specialization commutes (in the obvious sense) with corestriction, it follows that
\[ \sspp^\wp_{0,K}(P_K)=\cores_{K_1/K}\bigl(\sspp^\wp_{0,K_1}(P_1)\bigr)=e\cdot\cores_{K_1/K}(y_{\wp,1})=e\cdot y_{\wp,K}, \]
whence
\begin{equation} \label{reduction-eq1}
\pi_{\wp,K}\bigl(\sspp^\wp_{0,K}(P_K)\bigr)=\bar e\cdot\pi_{\wp,K}(y_{\wp,K})
\end{equation}
with $\bar e\in\bar\F_p^\times$. On the other hand, by part (1) of Theorem \ref{c-o-thm}, $\xi_{f,K_1}\bigl(\sspp^{\wp_2}_{0,K_1}(P_1)\bigr)=d\cdot\delta_{f,K_1}(\alpha_1)$ for some $d\in\bar\Z_p^\times$, hence \eqref{cores-delta-eq} gives
\begin{equation} \label{reduction-eq2}
\begin{split}
 \pi_{f,K}\Big(\xi_{f,K}\bigl(\sspp^{\wp_2}_{0,K}(P_K)\bigr)\Big)&=\pi_{f,K}\Big(\cores_{K_1/K}\bigl(\xi_{f,K_1}\bigl(\sspp^{\wp_2}_{0,K_1}(P_1)\bigr)\bigr)\Big)\\
 &=\bar d\cdot\pi_{f,K}\Big(\cores_{K_1/K}\bigl(\delta_{f,K_1}(\alpha_1)\bigr)\Big)\\
 &=\bar d\cdot\cores_{K_1/K}\bigl((\pi_{f,K_1}\circ\delta_{f,K_1})(\alpha_1)\bigr)\\
 &=\bar d\cdot\bar\delta_{f,K}(\bar\alpha_K)
\end{split} 
\end{equation}
with $\bar d\in\bar\F_p^\times$. For simplicity, set
\[ \psi_K\defeq\bar\xi_{f,K}\circ\bsspp^{\wp_2}_{0,K}\circ\,(\bsspp^\wp_{0,K})^{-1}:H^1\Big(K,\bar T_{f_\wp^\flat}^\dagger\Big)\overset\simeq\longrightarrow H^1\bigl(K,A_f[\p]\bigr). \]
In light of \eqref{reduction-eq1} and \eqref{reduction-eq2}, the commutativity of \eqref{commutative-eq} with $L=K$ ensures that
\[ \begin{split}
   \psi_K\bigl(\pi_{\wp,K}(y_{\wp,K})\bigr)&=\bar e^{-1}\cdot\psi_K\Big(\pi_{\wp,K}\bigl(\sspp^\wp_{0,K}(P_K)\bigr)\Big)\\
   &=\bar e^{-1}\cdot\pi_{f,K}\Big(\xi_{f,K}\bigl(\sspp^{\wp_2}_{0,K}(P_K)\bigr)\Big)\\
   &=\bar e^{-1}\bar d\cdot\bar\delta_{f,K}(\bar\alpha_K) 
   \end{split} \]
with $\bar e^{-1}\bar d\in\bar\F_p^\times$. But $\bar\delta_{f,K}(\bar\alpha_K)\not=0$ by \eqref{nonzero-eq}, hence $y_{\wp,K}\not=0$ in $H^1\bigl(K,T^\dagger_{f_\wp^\flat}\otimes\bar\Z_p\bigr)$. This immediately implies that $y_{\wp,K}\not=0$ in $H^1\bigl(K,T^\dagger_{f_\wp^\flat}\bigr)$. Finally, by Corollary \ref{no-torsion-coro}, the $\cO_{f_\wp^\flat,\fP}$-module $H^1\bigl(K,T^\dagger_{f_\wp^\flat}\bigr)$ is torsion-free, hence $y_{\wp,K}$ is not torsion. \end{proof}

\subsection{Results on Shafarevich--Tate groups} \label{sha-subsec}

Suppose that the abelian variety $A_f$ is an elliptic curve; this is equivalent to requiring that $a_n(f)\in\Q$ for all $n\geq1$. In this case, we write $E_f$ in place of $A_f$. By a result of Carayol (\cite[\S 0.8, Corollaire]{carayol}), $E_f$ has conductor $N$; since $N$ is square-free, $E_f$ is semistable. 

Recall the set $\Xi_f$ of good ordinary primes for $f$ introduced in \S \ref{heegner-subsec} and define
\[ \Theta_f\defeq\Xi_f\cap\bigl\{\text{primes $\ell$ such that $\ell\nmid(q^2-1)$ for any prime $q\,|\,N$}\bigr\}. \]
Pick $p\in\Theta_f$. Let $\Sel_{p^\infty}(E_f/\Q)$ be the $p^\infty$-Selmer group of $E_f$ over $\Q$ and let $\Sha_{p^\infty}(E_f/\Q)$ be the $p$-primary part of the Shafarevich--Tate group of $E_f$ over $\Q$. 

\begin{theorem}[W. Zhang] \label{zhang-thm}
The following properties are equivalent:
\begin{enumerate}
\item $\mathrm{rank}_\Z E_f(\Q)=1$ and $\Sha_{p^\infty}(E_f/\Q)$ is finite;
\vskip 1mm
\item $\ord_{s=1}L(E_f,s)=1$. 
\end{enumerate}
\end{theorem}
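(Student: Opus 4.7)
The plan is to dispatch the two implications separately, since they rest on completely different circles of ideas.

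For the direction $(2) \Rightarrow (1)$, which is the classical Gross--Zagier--Kolyvagin theorem, the idea is as follows. Our choice of $K$ in \S \ref{heegner-subsec} forces $L(f \otimes \chi_K, 1) \neq 0$, so (2) combined with the factorization $L(f/K, s) = L(f, s) L(f \otimes \chi_K, s)$ gives $\mathrm{ord}_{s=1} L(f/K, s) = 1$. By the Gross--Zagier formula \cite{GZ}, the Heegner point $\alpha_K \in E_f(K)$ of \eqref{alpha-K-eq} is non-torsion. The Euler system $\{\alpha_c\}_{c \geq 1}$ of Heegner points, combined with Kolyvagin's bounding argument, then yields $\mathrm{rank}_\Z E_f(K) = 1$ and the finiteness of $\Sha_{p^\infty}(E_f/K)$. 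The technical conditions packaged into the definition of $\Theta_f$ --- the $p$-distinguished and absolutely irreducible nature of $\bar\rho_{f,\p}$ (Lemma \ref{distinguished-lemma}), good ordinary reduction at $p$, and $p \nmid (q^2-1)$ for every prime $q \mid N$, which controls the local Tamagawa factors at primes of multiplicative reduction --- are exactly what ensure that Kolyvagin's machinery gives the sharp conclusion. Descent from $K$ to $\Q$ is then immediate from the decomposition of $E_f(K) \otimes \Q$ and of $\Sha_{p^\infty}(E_f/K)$ under the action of $\mathrm{Gal}(K/\Q)$, together with the fact that $\alpha_K$ lies in the $\mathrm{Gal}(K/\Q)$-invariant part (forced by $\varepsilon(f) = -1$, which produces the correct sign in the complex-conjugation eigenvalue of $\alpha_K$ modulo torsion).

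For the hard direction $(1) \Rightarrow (2)$, the plan is to invoke W.~Zhang's converse theorem \cite{zhang-selmer}. Parity considerations arising from (1) already force $\mathrm{ord}_{s=1} L(E_f, s)$ to be odd under the finiteness of $\Sha_{p^\infty}(E_f/\Q)$, so the only alternative to (2) is $\mathrm{ord}_{s=1} L(E_f, s) \geq 3$; the task is to exclude this possibility. Zhang accomplishes this by combining the anticyclotomic Iwasawa theory of $E_f/K$ with level-raising congruences \emph{\`a la} Bertolini--Darmon for forms congruent to $f$ modulo $p$, together with a fine non-triviality analysis of Kolyvagin classes in the residual Selmer group. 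The hypotheses defining $\Theta_f$ are precisely calibrated so that the running assumptions of \cite{zhang-selmer} are met under $p \in \Theta_f$.

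The main obstacle is the converse direction: no new proof is offered here, and the cleanest presentation is simply to verify that $p \in \Theta_f$ places us in the framework of \cite{zhang-selmer} and then invoke the main theorem of that paper.
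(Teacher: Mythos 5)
Your overall route matches the paper's: $(2)\Rightarrow(1)$ is the classical Kolyvagin--Gross--Zagier theorem, and $(1)\Rightarrow(2)$ follows from W.~Zhang's converse theorem \cite{zhang-selmer}, both cited as black boxes. There is, however, a misattribution in your first paragraph worth correcting. You assert that the conditions built into $\Theta_f$ --- in particular $p\nmid(q^2-1)$ for every prime $q\mid N$ --- are ``exactly what ensure that Kolyvagin's machinery gives the sharp conclusion'' in the forward direction. This is backwards: the paper cites the forward implication with no hypothesis-checking at all, precisely because the classical theorem needs no such calibration, whereas the constraint $p\nmid(q^2-1)$ is introduced to satisfy hypothesis (2) of \cite[Theorem 1.4]{zhang-selmer}, i.e.\ for the hard direction. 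Your second paragraph says, correctly but vaguely, that $\Theta_f$ ``places us in the framework of \cite{zhang-selmer}''; the paper spells out what this requires, and these verifications are the entire content of its proof of $(1)\Rightarrow(2)$: one must observe that (1) yields $\corank_{\Z_p}\!\Sel_{p^\infty}(E_f/\Q)=1$ (Zhang's theorem is phrased in terms of Selmer corank, not Mordell--Weil rank plus finiteness of $\Sha$), and that $\bar\rho_{f,p}$ is surjective, which the paper deduces from semistability of $E_f$ (a consequence of $N$ being square-free) together with $p\geq11$, via Mazur's isogeny theorem. Your description of the internals of Zhang's proof (anticyclotomic Iwasawa theory, Bertolini--Darmon level-raising, parity) is harmless but beside the point, since the paper treats that result as a citation and devotes its proof to checking its hypotheses.
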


\begin{proof} The implication $(2)\Rightarrow(1)$ is the Kolyvagin--Gross--Zagier theorem. Let us assume (1), which ensures that $\corank_{\Z_p}\!\Sel_{p^\infty}(E/\Q)=1$. Since $E_f$ is semistable and $p\geq11$, the representation $\bar\rho_{f,p}$ is surjective by \cite[Theorem 4]{Mazur-isogenies}. Furthermore, since $p\nmid(q^2-1)$ for any prime number $q\,|\,N$, hypothesis (2) in \cite[Theorem 1.4]{zhang-selmer} is trivially satisfied. Therefore, (2) follows from \cite[Theorem 1.4, (ii)]{zhang-selmer}.  \end{proof}

We make a few remarks about Theorem \ref{zhang-thm}.

\begin{remark}
The assumption that $p\nmid(q^2-1)$ for any prime $q\,|\,N$ can be replaced with the weaker (but somewhat less natural) condition that if $q\,|\,N$ is a prime such that $q\equiv\pm1\pmod{p}$, then $\bar\rho_{f,p}$ is ramified at $q$ (see hypothesis (2) in \cite[Theorem 1.4]{zhang-selmer}). 
\end{remark}

\begin{remark}
If one replaces the finiteness of $\Sha_{p^\infty}(E_f/\Q)$ with the finiteness of the full Shafarevich--Tate group $\Sha(E_f/\Q)$ of $E_f$ over $\Q$, then one can take $p\in\Xi_f$ and use \cite[Theorem 1.5]{zhang-selmer} in place of \cite[Theorem 1.4, (ii)]{zhang-selmer}.
\end{remark}

\begin{remark}
Assuming that either
\begin{itemize}
\item[(a)] there is at least one odd prime at which $E_f$ has non-split multiplicative reduction or there are at least two odd primes at which $E_f$ has split multiplicative reduction
\end{itemize}
or
\begin{itemize}
\item[(b)] $E_f$ has split multiplicative reduction at an odd prime (plus some other mild technical conditions),
\end{itemize}
a converse to the Kolyvagin--Gross--Zagier theorem for $E_f$ is provided also by \cite[Theorem A']{Skinner} if (a) holds and by \cite[Theorem A]{Ven} if (b) holds.
\end{remark}

Since $L(E_f,s)=L(f,s)$, by Theorem \ref{zhang-thm} we can reformulate Assumption \ref{main-ass}, which has been in force from \S \ref{choice-subsec}, as

\begin{assumption} \label{main-ass2}
$\rank_\Z E_f(\Q)=1$ and $\#\Sha_{p^\infty}(E_f/\Q)<\infty$.
\end{assumption}

Let $\wp$ be an arithmetic prime of $\mathcal R$ of weight $k_\wp>2$ such that $k_\wp\equiv2\pmod{2(p-1)}$ and trivial character. As before, set $g\defeq f_\wp^\flat\in S_{k_\wp}^\new(\Gamma_0(N))$. Recall that, in this context, $\fP$ is used as a shorthand for $\fP_g=\fP\cap\cO_{\Q_g}$. As in \eqref{L-functional-eq}, write $\varepsilon(\star)$ for the root number of $\star$. 

As in \S \ref{lambda-subsec}, let us consider the quotient $W^\dagger_g=V^\dagger_g\big/T^\dagger_g$. For any number field $L$, let us write $\Sel_\fP(g/L)$ for the $\fP_g$-primary Selmer group of $g$ over $L$ as defined, in terms of $W_g^\dagger$, in \cite[Definition 2.6]{LV} (notice that $\Sel_\fP(g/L)$ is denoted by $H^1_f(L,W_\p)$ in \cite{LV}, where $W_\p$ stands for $W_g^\dagger$). As in \cite{Nek}, we define the $\fP_g$-primary Shafarevich--Tate group of $g$ over $L$ via the short exact sequence of $\cO_{\Q_g,\fP}$-modules
\begin{equation} \label{sha-def-eq}
0\longrightarrow\Lambda_{g,\fP}(L)\otimes_{\Z_p}\Q_p/\Z_p\longrightarrow\Sel_\fP(g/L)\longrightarrow\Sha_\fP(g/L)\longrightarrow0, 
\end{equation}
where the first non-trivial map on the left is essentially a consequence of work of Saito on the weight-monodromy conjecture for compactified Kuga--Sato varieties (\cite{saito}, \cite{saito2}) and of results of Nekov\'a\v{r} (\cite{Nek3}) and Nizio\l\ (\cite{niziol}) on $p$-adic regulators (see, \emph{e.g.}, \cite[\S 2.4]{LV} for details). The group $\Sha_\fP(g/L)$ should be thought of as a higher weight counterpart of the classical Shafarevich--Tate group of an abelian variety, which is given by the recipe ``Selmer group modulo rational points''.

\begin{remark} \label{sha-rem}
Let $K$ be the imaginary quadratic field that was fixed in \S \ref{heegner-subsec}. If $y_{\wp,K}$ is non-torsion, then there is an equality
\begin{equation} \label{sha-def-eq2}
\Sha_\fP(g/K)=\Sel_\fP(g/L)\big/\Sel_\fP(g/K)_{\ddiv}, 
\end{equation}
where $\Sel_\fP(g/K)_{\ddiv}$ is the maximal divisible $\cO_{\Q_g,\fP}$-submodule of $\Sel_\fP(g/K)$. In fact, by the results in \cite{Nek}, if $y_{\wp,K}$ is non-torsion, then both $\Lambda_{g,\fP}(K)\otimes_{\Z_p}\Q_p/\Z_p$ and $\Sel_\fP(g/K)$ have $\cO_{\Q_g,\fP}$-corank $1$, hence $\Lambda_{g,\fP}(K)\otimes_{\Z_p}\Q_p/\Z_p=\Sel_\fP(g/K)_{\ddiv}$. Therefore, equality \eqref{sha-def-eq2} shows that, in this case, the Shafarevich--Tate group \emph{\`a la} Nekov\'a\v{r} defined via \eqref{sha-def-eq} coincides with the Shafarevich--Tate group as introduced by Bloch--Kato in \cite{BK} (\emph{cf.} also \cite{Flach}). Furthermore, as in \S \ref{lambda-subsec}, let $\pi_g$ be a uniformizer for $\cO_{\Q_g,\fP}$. Since $p\cO_{\Q_g,\fP}=\pi_g^e\cO_{\Q_g,\fP}$ for some integer $e\geq1$, the $\cO_{\Q_g,\fP}$-submodule $\Sel_\fP(g/L)_{\ddiv}$ is the maximal $p^\infty$-divisible $\cO_{\Q_g,\fP}$-submodule of $\Sel_\fP(g/L)$.
\end{remark}

Let the imaginary quadratic field $K$ be as in \S \ref{heegner-subsec} and denote by $\Sel_\fP(g/K{)}^\pm$ the $\pm1$-eigenspaces of complex conjugation $\tau\in\Gal(K/\Q)$ acting on $\Sel_\fP(g/K)$, so that there is a splitting
\[ \Sel_\fP(g/K)=\Sel_\fP(g/K{)}^+\oplus\Sel_\fP(g/K{)}^- \]
of $\cO_{\Q_g,\fP}$-modules. 

\begin{lemma} \label{selmer-lemma}
Restriction induces an isomorphism $\Sel_\fP(g/\Q)\simeq\Sel_\fP(g/K{)}^+$ of $\cO_{\Q_g,\fP}$-modules.
\end{lemma}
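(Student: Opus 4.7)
The plan is to deduce the isomorphism from the inflation-restriction exact sequence combined with a local compatibility check of Bloch--Kato conditions. First, I would apply Corollary \ref{solvable-coro} to the solvable extension $K/\Q$ to conclude that $H^0\bigl(K,W_g^\dagger\bigr)=0$. The inflation-restriction exact sequence for the quadratic extension $K/\Q$ then collapses to an isomorphism
\[
\res_{K/\Q}\colon H^1\bigl(\Q,W_g^\dagger\bigr)\overset\simeq\longrightarrow H^1\bigl(K,W_g^\dagger\bigr)^{\Gal(K/\Q)}.
\]
Since $p$ is odd and $\Gal(K/\Q)=\langle\tau\rangle$ has order $2$, multiplication by $2$ is invertible on the $\fP_g$-primary module $H^1(K,W_g^\dagger)$, which gives a canonical eigenspace decomposition $H^1(K,W_g^\dagger)=H^1(K,W_g^\dagger)^+\oplus H^1(K,W_g^\dagger)^-$ under $\tau$. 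In particular $H^1(K,W_g^\dagger)^{\Gal(K/\Q)}=H^1(K,W_g^\dagger)^+$, and the same decomposition restricts to $\Sel_\fP(g/K)$, so that $\Sel_\fP(g/K)^{\Gal(K/\Q)}=\Sel_\fP(g/K)^+$.

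Next I would verify that this global isomorphism identifies the two Selmer groups. For the forward inclusion, if $c\in\Sel_\fP(g/\Q)$ and $w\,|\,v$ is a place of $K$ above a place $v$ of $\Q$, then the functoriality of the local Bloch--Kato conditions ensures that $\res_{K_w/\Q_v}\bigl(c|_{\Q_v}\bigr)\in H^1_f\bigl(K_w,W_g^\dagger\bigr)$, so that $\res_{K/\Q}(c)\in\Sel_\fP(g/K)^+$. For the reverse inclusion, pick $d\in\Sel_\fP(g/K)^+$; by the inflation-restriction isomorphism above there exists a unique $c\in H^1\bigl(\Q,W_g^\dagger\bigr)$ with $\res_{K/\Q}(c)=d$, and the task is to show that $c|_{\Q_v}\in H^1_f\bigl(\Q_v,W_g^\dagger\bigr)$ for every place $v$ of $\Q$. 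If $v$ splits in $K$, then $K_w=\Q_v$ and the assertion is tautological. If $v$ does not split, then $[K_w:\Q_v]=2$, and combining $\cores_{K_w/\Q_v}\circ\res_{K_w/\Q_v}=[K_w:\Q_v]$ with the preservation of the local $f$-condition under corestriction yields $2\cdot c|_{\Q_v}\in H^1_f\bigl(\Q_v,W_g^\dagger\bigr)$; since $W_g^\dagger$ is $\fP_g$-primary with $p$ odd, $2$ acts invertibly on $H^1\bigl(\Q_v,W_g^\dagger\bigr)/H^1_f\bigl(\Q_v,W_g^\dagger\bigr)$ and we conclude $c|_{\Q_v}\in H^1_f\bigl(\Q_v,W_g^\dagger\bigr)$.

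The main obstacle is the second step, namely verifying the local compatibility of the Bloch--Kato conditions uniformly at all places (including $p$ and the primes of bad reduction), and in particular justifying the corestriction argument at the non-split places of $\Q$. Once this compatibility is granted, the result is essentially a formal consequence of inflation-restriction together with the fact that $2$ is invertible in $\cO_{g,\fP}$. This reduction to a local compatibility statement is the standard strategy and works because complex conjugation on $K$, viewed as the non-trivial element of $\Gal(K/\Q)$, permutes the places of $K$ above each place of $\Q$ compatibly with the Selmer local conditions.
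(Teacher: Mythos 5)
Your argument is correct and follows essentially the same route as the paper: inflation--restriction combined with the vanishing $H^0(K,W_g^\dagger)=0$ from Corollary \ref{solvable-coro} gives $H^1(\Q,W_g^\dagger)\simeq H^1(K,W_g^\dagger)^{\Gal(K/\Q)}$, and then one matches local conditions. The only difference is that the paper compresses the local matching into ``by keeping track of local conditions, it can be checked,'' whereas you spell out that step via the corestriction-restriction trick and the invertibility of $2$ in $\cO_{g,\fP}$, which is exactly the standard verification the paper is alluding to.
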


\begin{proof} Set $W_g^\dagger(K)\defeq H^0\bigl(K,W_g^\dagger\bigr)$. Combining the inflation-restriction exact sequence 
\[ \begin{split}
   0&\longrightarrow H^1\bigl(\Gal(K/\Q),W^\dagger_g(K)\bigr)\longrightarrow H^1\bigl(\Q,W^\dagger_g\bigr)\longrightarrow H^1\bigl(K,W^\dagger_g\bigr)^{\Gal(K/\Q)}\\
     &\longrightarrow H^2\bigl(\Gal(K/\Q),W^\dagger_g(K)\bigr)\longrightarrow\dots
   \end{split} \]
with the triviality of $W^\dagger_g(K)$ that is guaranteed by Corollary \ref{solvable-coro}, one gets an isomorphism $H^1\bigl(\Q,W^\dagger_g\bigr)\simeq H^1\bigl(K,W^\dagger_g\bigr)^{\Gal(K/\Q)}$. By keeping track of local conditions, it can be checked that this isomorphism restricts to the desired isomorphism $\Sel_\fP(g/\Q)\simeq\Sel_\fP(g/K{)}^+$. \end{proof}

\begin{proposition} \label{sha-prop}
If $\Sha_\fP(g/K)$ is finite, then $\Sha_\fP(g/\Q)$ is finite. 
\end{proposition}

\begin{proof} Let us set $\Lambda_\Q\defeq\Lambda_{g,\fP}(\Q)$, $\Lambda_K\defeq\Lambda_{g,\fP}(K)$, $S_\Q\defeq\Sel_\fP(g/\Q)$, $S_K\defeq\Sel_\fP(g/K)$, $\Sha_\Q\defeq\Sha_\fP(g/\Q)$, $\Sha_K\defeq\Sha_\fP(g/K)$, $\cG\defeq\Gal(K/\Q)$. With notation as above, the short exact sequence of $\cO_{\Q_g,\fP}$-modules
\[ 0\longrightarrow\Lambda_K\otimes_{\Z_p}\Q_p/\Z_p\longrightarrow S_K\longrightarrow\Sha_K\longrightarrow0 \]
yields a long exact sequence
\[ 0\longrightarrow(\Lambda_K\otimes_{\Z_p}\Q_p/\Z_p)^+\longrightarrow S_K^+\longrightarrow\Sha_K^+\longrightarrow H^1(\cG,\Lambda_K\otimes_{\Z_p}\Q_p/\Z_p)\longrightarrow\dots \]
in cohomology. Since $\#\cG=2$ and $p$ is odd, $H^1(\cG,\Lambda_K\otimes_{\Z_p}\Q_p/\Z_p)=0$; therefore, we get an isomorphism 
\begin{equation} \label{sha-eq0}
\Sha_K^+\simeq S_K^+\big/(\Lambda_K\otimes_{\Z_p}\Q_p/\Z_p)^+ 
\end{equation}
of $\cO_{g,\fP}$-modules. By Corollary \ref{no-torsion-coro}, $\Lambda_K$ is torsion-free over $\Z_p$, so there is an identification 
\begin{equation} \label{sha-eq1}
(\Lambda_K\otimes_{\Z_p}\Q_p)^+=\Lambda_K^+\otimes_{\Z_p}\Q_p 
\end{equation}
of $\cO_{\Q_g,\fP}$-modules. Since $\Lambda_K$ is flat over $\cO_{\Q_g,\fP}$, there is a short exact sequence of $\cO_{\Q_g,\fP}$-modules
\[ 0\longrightarrow\Lambda_K\longrightarrow\Lambda_K\otimes_{\Z_p}\Q_p\longrightarrow\Lambda_K\otimes_{\Z_p}\Q_p/\Z_p\longrightarrow0. \]
Reasoning as above, we obtain an isomorphism 
\begin{equation} \label{sha-eq2}
(\Lambda_K\otimes_{\Z_p}\Q_p/\Z_p)^+\simeq(\Lambda_K\otimes_{\Z_p}\Q_p)^+\big/\Lambda_K^+=(\Lambda_K^+\otimes_{\Z_p}\Q_p)\big/\Lambda_K^+ 
\end{equation}
of $\cO_{g,\fP}$-modules, where the equality on the right follows from \eqref{sha-eq1}. Similarly, there is an isomorphism of $\cO_{g,\fP}$-modules
\begin{equation} \label{sha-eq3}
\Lambda_\Q\otimes_{\Z_p}\Q_p/\Z_p\simeq(\Lambda_\Q\otimes_{\Z_p}\Q_p)\big/\Lambda_\Q. 
\end{equation}
Moreover, by Proposition \ref{lambda-prop2}, restriction gives an isomorphism of $\cO_{g,\fP}$-modules $\Lambda_\Q\simeq\Lambda_K^+$. Thus, from \eqref{sha-eq2} and \eqref{sha-eq3} we get an isomorphism of $\cO_{g,\fP}$-modules
\begin{equation} \label{sha-eq4}
(\Lambda_K\otimes_{\Z_p}\Q_p/\Z_p)^+\simeq\Lambda_\Q\otimes_{\Z_p}\Q_p/\Z_p.
\end{equation}
On the other hand, by Lemma \ref{selmer-lemma}, restriction induces also an isomorphism $S_\Q\simeq S_K^+$ of $\cO_{\Q_g,\fP}$-modules. Finally, combining \eqref{sha-eq0} and \eqref{sha-eq4} produces isomorphisms 
\[ \Sha_K^+\simeq S_K^+\big/(\Lambda_K\otimes_{\Z_p}\Q_p/\Z_p)^+\simeq S_\Q\big/(\Lambda_\Q\otimes_{\Z_p}\Q_p/\Z_p)=\Sha_\Q \]
of $\cO_{g,\fP}$-modules. In particular, $\Sha_\Q$ is finite if $\Sha_K$ is finite. \end{proof}

The next result says that the finiteness of the $p$-primary Shafarevich--Tate group of $f$ together with the algebraic rank $1$ property (Assumption \ref{main-ass2}) propagates to certain higher (even) weight forms in our Hida family $\hf$.

\begin{theorem} \label{sha-thm}
$r_{\alg,\fP}(g/\Q)=1$ and $\#\Sha_\fP(g/\Q)<\infty$.
\end{theorem}

\begin{proof} Let us first prove the rank part. Let $K$ be the imaginary quadratic field that was chosen in \S \ref{heegner-subsec}. Let $\tilde\tau\in\Gal(K_1/\Q)$ be the restriction to $K_1$ of complex conjugation in $G_\Q$, so that $\tilde\tau$ is a lift of $\tau$. By \cite[Proposition 6.2]{Nek}, there is an equality
\begin{equation} \label{tau-eq}
\tilde\tau(y_{\wp,1})=-\varepsilon(g)\cdot\sigma(y_{\wp,1})
\end{equation}
for a suitable $\sigma\in\Gal(K_1/K)$. The arithmetic prime $\wp_2$ of $\mathcal R$ corresponding to $f$ is generic (see, \emph{e.g.}, \cite[(3.1.1)]{NP}), hence $\varepsilon(\hf)=\varepsilon(f)=-1$. Lemmas \ref{L-functions-lemma} and \ref{root-number-lemma} imply that $\varepsilon(g)=\varepsilon(\hf)=-1$, therefore \eqref{tau-eq} gives
\[ \tilde\tau(y_{\wp,1})=\sigma(y_{\wp,1}) \]
for some $\sigma\in\Gal(K_1/K)$. By taking corestrictions and using the fact that $\tilde\tau\delta=\delta\tilde\tau$ for all $\delta\in\Gal(K_1/K)$, we get
\begin{equation} \label{tau-cores-eq}
\begin{split}
   \tilde\tau\bigl(\cores_{K_1/K}(y_{\wp,1})\bigr)&=\cores_{K_1/K}\bigl(\tilde\tau(y_{\wp,1})\bigr)\\
   &=\cores_{K_1/K}\bigl(\sigma(y_{\wp,1})\bigr)=\cores_{K_1/K}(y_{\wp,1}). 
\end{split} 
\end{equation}
Since $\cores_{K_1/K}(y_{\wp,1})=y_{\wp,K}$, we deduce that $\tau(y_{\wp,K})=y_{\wp,K}$, \emph{i.e.}, $y_{\wp,K}\in\Lambda_{g,\fP}(\Q)\otimes_\Z\Q$ by Proposition \ref{lambda-prop2} (see also Remark \ref{lambda-inj-rem}). But Theorem \ref{non-torsion-prop} ensures that $y_{\wp,K}\not=0$, hence $r_{\alg,\fP}(g/\Q)=\dim_{\Q_{g,\fP}}\bigl(\Lambda_{g,\fP}(\Q)\otimes_\Z\Q\bigr)\geq1$. On the other hand, since $y_{\wp,K}$ is non-torsion, $r_{\alg,\fP}(g/K)=1$ by part (1) of Theorem \ref{nekovar-thm}, whence $r_{\alg,\fP}(g/\Q)\leq1$ by Proposition \ref{lambda-prop1}.

Finally, $\Sha_\fP(g/K)$ is finite by part (2) of Theorem \ref{nekovar-thm} and Remark \ref{sha-rem}, and then the finiteness of $\Sha_\fP(g/\Q)$ follows from Proposition \ref{sha-prop}. \end{proof}

The only condition that we imposed on $p$ is that it belong to $\Theta_f$, whose complement in the set of prime numbers that are ordinary for $f$ is finite (\emph{cf.} Remark \ref{sigma-rem}), so Theorem \ref{sha-thm} implies the rank $1$ part of Theorem A.

\begin{remark}
At the cost of adding some extra hypotheses on the local components of the cuspidal automorphic representation of $\GL_2(\A_\Q)$ attached to $f$, where $\A_\Q$ is the adele ring of $\Q$, we could avoid assuming that $A_f$ is an elliptic curve. More precisely, we could replace Assumption \ref{main-ass2} with the conditions $\rank_\Z A_f(\Q)=[\Q_f:\Q]$ and $\#\Sha(A_f/\Q)<\infty$, using \cite[Theorem A]{Skinner} instead of Theorem \ref{zhang-thm}.
\end{remark}

\begin{remark}
It might be tempting to relax Assumption \ref{main-ass2} by only requiring $\Sha_{p^\infty}(E_f/\Q)$ to be finite and then hopefully deduce the finiteness of $\Sha_\fP(g/\Q)$. Unfortunately, such a stronger result appears to lie well beyond the scope of current techniques.
\end{remark}

\begin{corollary} \label{sha-coro}
$r_{\alg,\fP}(g/\Q)=r_{\min}\bigl(\hf\bigr)$.
\end{corollary}

\begin{proof} Since $\varepsilon(\hf)=-1$, we have $r_{\min}(\hf)=1$, and the corollary follows immediately from Theorem \ref{sha-thm}. \end{proof}

As was remarked in the introduction, this result is consistent with (and provides some evidence for) Conjecture \ref{main-conj}.

\section{Analytic rank one in Hida families} \label{rank-sec}

As in Section \ref{sha-sec}, choose any $p\in\Xi_f$ and consider the Hida family $\hf\in\mathcal R[\![q]\!]$. Recall that $\varepsilon(\hf)=\varepsilon(f)=-1$, hence
\begin{equation} \label{r-min-eq}
r_{\min}\bigl(\hf\bigr)=1.
\end{equation}
In this section, we prove our result (Theorem \ref{main-thm}) in the direction of Greenberg's conjecture on analytic ranks in Hida families (Conjecture \ref{greenberg-conj}). Contrary to what was done in \S \ref{sha-subsec}, we do not assume that the Fourier coefficients of $f$ are rational.

\subsection{Zhang's formula for derivatives of $L$-functions} \label{zhang-subsec}

Let $g\in S_k(\Gamma_0(N))$ be a newform of weight $k\geq4$ and level $N$. Here $N$ is the level of our weight $2$ form $f$ that was introduced in \S \ref{choice-subsec}, so that everything we will say below applies, in particular, to the newforms $f_\wp^\flat$ in our $p$-adic Hida family $\f^{(p)}$.

Using arithmetic intersection theory \emph{\`a la} Gillet--Soul\'e (\cite{GS-1}, \cite[Chapter III]{soule}), S.-W. Zhang defined in \cite{Zhang-heights} a pairing, which we denote by ${\langle\cdot,\cdot\rangle}_{g,\GS}$, between certain CM cycles on Kuga--Sato varieties. Using this pairing, he proved a higher weight analogue of the Gross--Zagier formula for the $L$-function of a weight $2$ newform. More precisely, let $K$ be the imaginary quadratic field from \S \ref{heegner-subsec} and write $h_K$ for its class number. Furthermore, denote by $(g,g)$ the Petersson inner product of $g$ with itself and set $u_K\defeq\#\cO_K^\times/2$. Zhang's formula expresses the critical value of the derivative of $L(g/K,s)$ in terms of a suitable combination $s_g'$ of Heegner cycles in $\R$-linear Chow groups of Kuga--Sato varieties fibered over (classical) modular curves.

\begin{theorem}[S.-W. Zhang] \label{zhang-thm2}
$L'(g/K,k/2)=\begin{displaystyle}\frac{2^{2k-1}\pi^k(g,g)}{(k-2)!u_K^2h_K\sqrt{|D_K|}}\end{displaystyle}{\big\langle s_g',s_g'\big\rangle}_{g,\GS}$.
\end{theorem}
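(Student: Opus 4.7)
The plan is to follow Zhang's extension of the Gross--Zagier strategy from weight $2$ to higher (even) weight, carrying out a careful comparison between an analytic expression for $L'(g/K,k/2)$ obtained by Rankin--Selberg unfolding and a geometric expression for the Gillet--Soulé height $\langle s_g', s_g'\rangle_{g,\GS}$ obtained by arithmetic intersection on the Kuga--Sato tower over an integral model of $X_0(N)$. The first step is to write $L(g/K,s) = L(g,s)L(g\otimes\chi_K,s)$ as a Rankin--Selberg convolution of $g$ against a theta series $\theta_K$ attached to $K$ (encoding the ideal class group), against a real analytic Eisenstein series $E(s,z)$ of suitable weight and level on $\Gamma_0(N)$. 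Explicitly, one seeks a ``kernel function'' $\Phi_s(z)$, built from $\theta_K\cdot E(s,z)$, such that
\[
L(g/K,s) \;=\; c(k,N,K)\cdot \bigl\langle g,\,\Phi_s\bigr\rangle_{\mathrm{Pet}}
\]
with $c(k,N,K)$ an explicit constant absorbing gamma factors, powers of $\pi$, and $(g,g)$-type normalisations. Differentiating in $s$ at the central point $s=k/2$ then reduces the problem to identifying $\partial_s\Phi_s|_{s=k/2}$ after holomorphic projection to $S_k(\Gamma_0(N))$.

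The second step is to compute the Fourier coefficients of the holomorphic projection of $\partial_s\Phi_s|_{s=k/2}$. Each such coefficient decomposes naturally as a sum of local terms indexed by (ideal classes times) prime ideals of $\cO_K$, in parallel to Gross--Zagier: archimedean contributions on the one side, non-archimedean contributions on the other. Simultaneously, one computes the arithmetic intersection numbers entering $\langle s_g', s_g'\rangle_{g,\GS}$: by definition this is a sum of the height pairings of CM cycles (traces of diagonal cycles on fibres of the Kuga--Sato variety $\tilde{\mathcal E}_N^{k-2}$ over Heegner points $x_c$), and Gillet--Soulé theory splits each such pairing into a finite sum of local intersections at primes of bad reduction plus a Green--current (archimedean) contribution. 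The goal is to match these, local prime by local prime, up to the global factor $\frac{2^{2k-1}\pi^k(g,g)}{(k-2)!\,u_K^2 h_K\sqrt{|D_K|}}$.

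The third step is the archimedean matching: one must show that the Green--current contribution to ${\langle s_g',s_g'\rangle}_{g,\GS}$, which comes from a regularised integral against a form representing the cohomology class of the diagonal, reproduces (after holomorphic projection) the derivative of the archimedean Euler factor of the Eisenstein series. This is where the factor $(k-2)!$ naturally appears via the normalisation of the local system $\Sym^{k-2}R^1\gamma_*\Q_p$ and the gamma factor $\Gamma(s)$ in the completed $L$-function. The non-archimedean matching, at each prime $\ell$ of good or multiplicative reduction for $X_0(N)$, compares the Gross--Zagier style counting of CM points reducing to the same supersingular (or ordinary) point of the special fibre with the $\ell$-part of the Fourier coefficients computed analytically; the square-free level assumption simplifies the geometry at the primes of multiplicative reduction.

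The main obstacle is the archimedean side: controlling the Green--current contribution to the Gillet--Soulé pairing of Heegner cycles on a \emph{higher-dimensional} Kuga--Sato variety, and identifying it explicitly with the correct derivative of the real analytic Eisenstein kernel after holomorphic projection. In weight $2$ this reduces to a classical hyperbolic distance computation on the upper half plane, but for $k>2$ one must work with $(k-2)$-fold self-products of the universal elliptic curve and with a Green form for a cycle of codimension $k/2$, and the needed asymptotic analysis (together with showing that spurious non-holomorphic contributions vanish after holomorphic projection onto $S_k(\Gamma_0(N))$) is by far the most delicate step. The non-archimedean matching, by contrast, is formally parallel to Gross--Zagier once the correct local intersection multiplicities on the Kuga--Sato model have been set up.
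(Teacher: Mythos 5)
The paper's proof of this statement is a one-line citation: it invokes Corollary~0.3.2 of Zhang's paper \cite{Zhang-heights} and specializes the character $\chi$ appearing there to the trivial character. Your proposal instead sketches a from-scratch proof of Zhang's formula itself. The outline you give is faithful to Zhang's strategy: Rankin--Selberg unfolding of $L(g/K,s)$ against a kernel built from a theta series of $K$ and a real-analytic Eisenstein series, differentiation at the central point followed by holomorphic projection, the decomposition of the Gillet--Soul\'e pairing $\langle s_g',s_g'\rangle_{g,\GS}$ into an archimedean Green-current contribution plus finite local intersection numbers on a model of the Kuga--Sato variety, and a term-by-term matching of the two sides. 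You also correctly single out the archimedean computation (Green forms for codimension-$k/2$ cycles on $\tilde{\E}_N^{k-2}$, and control of non-holomorphic residue after projection to $S_k(\Gamma_0(N))$) as the genuinely hard step when $k>2$, in contrast to the hyperbolic-distance calculation in weight $2$. So there is nothing wrong in principle with your approach; it is just enormously more ambitious than what the paper does.

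That said, as a proof of the statement \emph{as it appears in the paper}, two things are missing. First, what you wrote is an outline, not a proof: the key archimedean matching, which you yourself flag as the main obstacle, is left entirely unverified, so you have not actually closed the argument. Reproducing Zhang's several-dozen-page proof is not realistically the intended content here. Second, and more to the point, you never address the single step that the paper's proof actually requires, namely how the general identity of \cite[Corollary~0.3.2]{Zhang-heights}, stated for an arbitrary (ring class) character $\chi$, collapses to the displayed formula when $\chi$ is trivial; in particular, one should explain how the normalizing factor
\[
\frac{2^{2k-1}\pi^k(g,g)}{(k-2)!\,u_K^2\,h_K\sqrt{|D_K|}}
\]
arises from the specialization, with $h_K$ coming from the trivial character summing over the class group, $u_K$ and $\sqrt{|D_K|}$ from the normalization of $\theta_K$, and $(k-2)!$ and the $\pi$-power from the archimedean $\Gamma$-factor and Petersson normalization. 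A clean treatment of this specialization step is what the paper's proof consists of and is what should be supplied.
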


\begin{proof} With notation as in \cite{Zhang-heights}, this follows from \cite[Corollary 0.3.2]{Zhang-heights} upon taking $\chi$ to be the trivial character. \end{proof}

\begin{remark}
Theorem \ref{zhang-thm2} is valid, more generally, for newforms of any level $M$ and for any imaginary quadratic field satisfying the Heegner hypothesis relative to $M$.  
\end{remark}

Some words of caution are in order here. Unlike what happens with the N\'eron--Tate height pairing on abelian varieties that appears in the Gross--Zagier formula, the non-degeneracy of the Gillet--Soul\'e pairing is only conjectural. In fact, the non-degeneracy of ${\langle\cdot,\cdot\rangle}_{g,\GS}$ is (a consequence of) one of the arithmetic analogues of the standard conjectures proposed by Gillet and Soul\'e (\cite[Conjecture 2]{GS-2}); it is also a special case of general conjectures of Beilinson (\cite{beilinson-height}) and Bloch (\cite{bloch-height}) on positive definiteness of height pairings.

\subsection{Results over $K$} \label{quadratic-subsec}

We are interested in the analytic rank of forms in our Hida family $\hf$ after base change to the imaginary quadratic field $K$ that was fixed in \S \ref{heegner-subsec}. As before, the Dirichlet character associated with $K$ will be denoted by $\chi_K$. 

For any arithmetic prime $\wp$ of $\mathcal R$ with trivial character, set $s_\wp'\defeq s'_{f_\wp^\flat}$ and ${\langle\cdot,\cdot\rangle}_{\wp,\GS}\defeq{\langle\cdot,\cdot\rangle}_{f^\flat_\wp,\GS}$. From now until the end of \S \ref{Q-subsec} we require the following non-degeneracy condition to hold.

\begin{assumption} \label{main-ass3}
The pairing ${\langle\cdot,\cdot\rangle}_{\wp,\GS}$ is non-degenerate for every arithmetic prime $\wp$ of weight $k_\wp\equiv2\pmod{2(p-1)}$ and trivial character.
\end{assumption}

Of course, the validity of Assumption \ref{main-ass3} would be guaranteed by the non-degeneracy of ${\langle\cdot,\cdot\rangle}_{g,\GS}$ for all $g$ that is predicted in \cite{GS-2}.

\begin{remark}
Unfortunately, while it is natural to impose a non-degeneracy condition like Assumption \ref{main-ass3} when studying the arithmetic of Heegner cycles (see, \emph{e.g.}, \cite[Assumption 4.1]{Xue}), we are not aware of any result in this direction for modular forms of weight $>2$.
\end{remark}

The proof of the following theorem, which may look deceptively straightforward, makes crucial use of the non-torsionness result of Theorem \ref{non-torsion-prop}.

\begin{theorem} \label{quadratic-thm}
If $\wp$ is an arithmetic prime of $\mathcal R$ with trivial character and weight $k_\wp>2$ such that $k_\wp\equiv2\pmod{2(p-1)}$, then $r_\an(f_\wp^\flat/K)=1$.
\end{theorem}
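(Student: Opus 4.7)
The plan is to combine Zhang's Gross--Zagier-type formula (Theorem \ref{zhang-thm2}) with the non-torsionness of the Heegner cycle $y_{\wp,K}$ (Theorem \ref{non-torsion-prop}) in order to force the first derivative of $L(f_\wp^\flat/K,s)$ at the central point $s=k_\wp/2$ to be non-zero, and then to couple this with the lower bound on $r_\an(f_\wp^\flat/K)$ coming from the sign of the functional equation.

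First, by Proposition \ref{analytic-rank-K-prop}, the fact that $K$ is imaginary and every prime dividing $N$ splits in $K$ (condition (a) in \S \ref{heegner-subsec}) already gives $r_\an(f_\wp^\flat/K)\geq 1$, so it suffices to show $L'(f_\wp^\flat/K,k_\wp/2)\ne 0$. For this I would apply Theorem \ref{zhang-thm2} to $g=f_\wp^\flat$, which yields an equality of the shape
\[ L'(f_\wp^\flat/K,k_\wp/2)=C_\wp\cdot\big\langle s_\wp',s_\wp'\big\rangle_{\wp,\GS} \]
for an explicitly non-zero constant $C_\wp$ (the factor involving $(f_\wp^\flat,f_\wp^\flat)$, $h_K$, $u_K$ and $\sqrt{|D_K|}$). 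The cycle $s_\wp'$ is a Chow-theoretic avatar of the Heegner cycle $y_{\wp,K}$: its image under the $\fP$-adic \'etale Abel--Jacobi map $\Phi_{f_\wp^\flat,\fP,K}$ of \eqref{AJ-eq} equals $y_{\wp,K}$ up to a non-zero factor. By Theorem \ref{non-torsion-prop}, $y_{\wp,K}$ is non-torsion over $\cO_{f_\wp^\flat,\fP}$, which forces $s_\wp'$ to be non-zero in the relevant $\R$-linear Chow group in which Zhang's pairing is defined. Then Assumption \ref{main-ass3} on the non-degeneracy of $\langle\cdot,\cdot\rangle_{\wp,\GS}$ yields $\langle s_\wp',s_\wp'\rangle_{\wp,\GS}\ne 0$, whence $L'(f_\wp^\flat/K,k_\wp/2)\ne 0$, and combining with the lower bound one concludes $r_\an(f_\wp^\flat/K)=1$.

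The main obstacle I anticipate is the extraction of the non-vanishing of the \emph{self-pairing} $\langle s_\wp',s_\wp'\rangle_{\wp,\GS}$ at a specific non-zero vector from mere non-degeneracy: in general, a non-degenerate bilinear form may still vanish on a given non-zero element. The right way to handle this is either to restrict the pairing to the one-dimensional $f_\wp^\flat$-isotypic subspace containing $s_\wp'$, on which non-degeneracy is tantamount to non-vanishing of any self-pairing, or to invoke the conjectural positive-definiteness of the Gillet--Soul\'e pairing predicted by the arithmetic standard conjectures of \cite{GS-2}, which is intrinsically part of the philosophy behind Assumption \ref{main-ass3}. A secondary bookkeeping task is to make the link between $s_\wp'$ and $y_{\wp,K}$ through the Abel--Jacobi map fully precise, so that non-torsionness of $y_{\wp,K}$ genuinely forces non-vanishing of $s_\wp'$ in the Chow group appearing in Zhang's formula rather than merely in its cohomological image.
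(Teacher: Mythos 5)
Your proposal follows the same route as the paper's proof: establish $r_\an(f_\wp^\flat/K)\geq 1$ via Proposition \ref{analytic-rank-K-prop}, then use Zhang's formula (Theorem \ref{zhang-thm2}), the non-degeneracy Assumption \ref{main-ass3}, and the non-torsionness of $y_{\wp,K}$ (Theorem \ref{non-torsion-prop}) to deduce that $L'(f_\wp^\flat/K,k_\wp/2)\neq 0$. The paper runs this contrapositively (assume $L'=0$, deduce $s'_\wp=0$, then cite a result of Longo--Vigni to the effect that $s'_\wp=0$ forces $y_{\wp,K}$ to be torsion, contradiction), whereas you argue forward; the geometric link between $s'_\wp$ and $y_{\wp,K}$ that you flag as needing care is precisely what the paper delegates to that citation. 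Your other concern --- that non-degeneracy of $\langle\cdot,\cdot\rangle_{\wp,\GS}$ does not by itself force a non-zero self-pairing --- is legitimate and is in fact glossed over in the paper's proof as well; the clean fix is the one you indicate, namely restricting to the one-dimensional $f_\wp^\flat$-isotypic line where non-degeneracy is equivalent to non-vanishing of the self-pairing, or alternatively invoking the conjectural positive-definiteness from \cite{GS-2}.
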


\begin{proof} Let $\wp$ be an arithmetic prime of $\mathcal R$ as in the statement of the theorem. Proposition \ref{analytic-rank-K-prop} with $g=f_\wp^\flat$ gives 
\begin{equation} \label{order-K-eq}
r_\an(f_\wp^\flat/K)\geq1.
\end{equation}   
If $L'(f_\wp^\flat/K,k_\wp/2)=0$, then Theorem \ref{zhang-thm2} and Assumption \ref{main-ass3} give $s'_\wp=0$. By unfolding the definition of $s'_\wp$, it can be shown that the vanishing of $s_\wp'$ forces $y_{\wp,K}$ to be torsion (\cite[Proposition 4.10, (2)]{LV-tamagawa}), which contradicts Theorem \ref{non-torsion-prop}. Therefore $L'(f_\wp^\flat/K,k_\wp/2)\not=0$, and then \eqref{order-K-eq} yields $r_\an(f_\wp^\flat/K)=1$. \end{proof}

\subsection{Results over $\Q$} \label{Q-subsec}

The next theorem, which is proved under Assumption \ref{main-ass3}, is our main result towards Conjecture \ref{greenberg-conj}. 

\begin{theorem} \label{main-thm}
If $\wp$ is an arithmetic prime of $\mathcal R$ with trivial character and weight $k_\wp>2$ such that $k_\wp\equiv2\pmod{2(p-1)}$, then $r_\an(f_\wp^\flat)=1=r_{\min}(\hf)$.
\end{theorem}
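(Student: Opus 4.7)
The plan is to reduce the statement over $\Q$ to the already-established statement over $K$ (Theorem \ref{quadratic-thm}) and then use a parity argument driven by root numbers to separate the two $L$-factors.

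First I would invoke the factorization \eqref{base-change-L-eq} for $g = f_\wp^\flat$, namely $L(f_\wp^\flat/K,s) = L(f_\wp^\flat,s) \cdot L(f_\wp^\flat \otimes \chi_K, s)$. Taking orders of vanishing at $s = k_\wp/2$ gives
\[
r_\an(f_\wp^\flat/K) \;=\; r_\an(f_\wp^\flat) \,+\, r_\an(f_\wp^\flat \otimes \chi_K).
\]
Since $k_\wp \equiv 2 \pmod{2(p-1)}$ implies $k_\wp \equiv 2 \pmod{p-1}$, Theorem \ref{quadratic-thm} applies and the left-hand side equals $1$. So it suffices to show the first summand is odd and the second is even, which will force $r_\an(f_\wp^\flat) = 1$ (and, as a byproduct, $r_\an(f_\wp^\flat \otimes \chi_K) = 0$).

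Next I would compute the two relevant root numbers. Since $\wp_2$ corresponds to $f$ and is generic (hence any arithmetic prime of weight greater than $2$ is generic by the Mazur--Tate--Teitelbaum description of exceptional primes, as recalled in the proof of Lemma \ref{root-number-lemma}), Definition \ref{root-number-def} together with Lemma \ref{root-number-lemma} yields
\[
\varepsilon(f_\wp^\flat) \;=\; \varepsilon(\hf) \;=\; \varepsilon(f) \;=\; -1,
\]
where the last equality uses Assumption \ref{main-ass} ($r_\an(f)=1$). Hence $r_\an(f_\wp^\flat)$ is odd, and in particular $r_\an(f_\wp^\flat) \geq 1$. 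On the other hand, since $K$ is imaginary and every prime dividing $N$ splits in $K$ (condition (a) in \S \ref{heegner-subsec}), formula \eqref{twisted-root-number-eq} gives
\[
\varepsilon(f_\wp^\flat, \chi_K) \;=\; -\varepsilon(f_\wp^\flat) \;=\; +1,
\]
so $r_\an(f_\wp^\flat \otimes \chi_K)$ is even, hence $\geq 0$.

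Combining these parity constraints with the equality $r_\an(f_\wp^\flat) + r_\an(f_\wp^\flat \otimes \chi_K) = 1$, the only possibility is $r_\an(f_\wp^\flat) = 1$ and $r_\an(f_\wp^\flat \otimes \chi_K) = 0$. The identity $r_{\min}(\hf) = 1$ is then just \eqref{r-min-eq}. Strictly speaking, there is no real obstacle at this final stage of the argument: all the genuine content is packaged into Theorem \ref{quadratic-thm}, whose proof in turn rests on Zhang's Gross--Zagier formula (Theorem \ref{zhang-thm2}), on Assumption \ref{main-ass3} about the non-degeneracy of the Gillet--Soul\'e pairing, and, most crucially, on the non-triviality of the Heegner cycle $y_{\wp,K}$ established in Theorem \ref{non-torsion-prop}. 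The step that deserves the most care when writing out the proof is verifying that $\wp$ is not among the exceptional primes of \cite{MTT}, so that the root number $\varepsilon(f_\wp^\flat)$ really does agree with $\varepsilon(\hf)$; this is handled by Lemma \ref{root-number-lemma} together with the assumption $k_\wp > 2$.
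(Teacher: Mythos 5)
Your proof is correct and takes essentially the same route as the paper: invoke Theorem \ref{quadratic-thm} for the rank over $K$, split $L(f_\wp^\flat/K,s)$ via \eqref{base-change-L-eq}, use Lemma \ref{root-number-lemma} to get $\varepsilon(f_\wp^\flat)=-1$ hence $r_\an(f_\wp^\flat)\geq1$, and conclude by non-negativity of the twisted rank. The extra parity computation of $\varepsilon(f_\wp^\flat,\chi_K)$ is harmless but not needed, since $r_\an(f_\wp^\flat\otimes\chi_K)\geq0$ already suffices.
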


\begin{proof} We noticed in \eqref{r-min-eq} that $r_{\min}(\hf)=1$. Let $\wp$ be an arithmetic prime of $\mathcal R$ as in the statement of the theorem. Lemma \ref{root-number-lemma} gives $\varepsilon(f^\flat_\wp)=\varepsilon(\hf)=-1$, so  
\begin{equation} \label{order-eq}
r_\an(f_\wp^\flat)\geq1.
\end{equation}
Let $K$ be the imaginary quadratic field, with associated Dirichlet character $\chi_K$, from \S \ref{heegner-subsec}. Theorem \ref{quadratic-thm} says that
\begin{equation} \label{order-f-k-K-eq}
r_\an(f_\wp^\flat/K)=1. 
\end{equation}
On the other hand, from \eqref{base-change-L-eq} with $g=f_\wp^\flat$ we obtain
\begin{equation} \label{ranks-sum-eq}
r_\an(f_\wp^\flat/K)=r_\an(f_\wp^\flat)+r_\an(f_\wp^\flat\otimes\chi_K).
\end{equation}
The theorem follows by combining \eqref{order-eq}, \eqref{order-f-k-K-eq} and \eqref{ranks-sum-eq}. \end{proof}

\begin{corollary} \label{main-coro}
If $\wp$ is an arithmetic prime of $\mathcal R$ with trivial character and weight $k_\wp>2$ such that $k_\wp\equiv2\pmod{2(p-1)}$, then $r_\an(f_\wp)=1=r_{\min}(\hf)$.
\end{corollary}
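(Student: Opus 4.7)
The plan is to deduce Corollary \ref{main-coro} as an essentially immediate consequence of Theorem \ref{main-thm}, using the relationship between $f_\wp$ and its associated newform $f_\wp^\flat$ of level $N$.

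First, I would invoke Remark \ref{old-rem}: since $\wp$ has weight $k_\wp > 2$ and trivial character, the specialization $f_\wp$ is a $p$-stabilized newform that is necessarily old at $p$, hence is exactly the $p$-stabilization $(f_\wp^\flat)^\sharp$ of the newform $f_\wp^\flat\in S_{k_\wp}(\Gamma_0(N))$ introduced in \S \ref{lambda-subsec}. Next, I would apply Lemma \ref{L-functions-lemma} to $g=f_\wp^\flat$, which tells us that
\[ \ord_{s=k_\wp/2}L(f_\wp^\flat,s)=\ord_{s=k_\wp/2}L((f_\wp^\flat)^\sharp,s), \]
that is, as recorded in Remark \ref{analytic-rank-rem}, the analytic ranks of $f_\wp^\flat$ and $f_\wp=(f_\wp^\flat)^\sharp$ coincide:
\[ r_\an(f_\wp)=r_\an(f_\wp^\flat). \]

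To conclude, I would apply Theorem \ref{main-thm}, whose hypothesis $k_\wp\equiv 2\pmod{2(p-1)}$ is exactly the assumption of the corollary. This gives $r_\an(f_\wp^\flat)=1=r_{\min}(\hf)$, and combining with the equality above yields $r_\an(f_\wp)=1=r_{\min}(\hf)$, as desired.

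There is no genuine obstacle here: the entire content of Corollary \ref{main-coro} is packaged inside Theorem \ref{main-thm}, and the only extra input needed is that $p$-stabilization preserves the order of vanishing of the $L$-function at the central critical point, which is the trivial observation that removing an Euler factor of the form $(1-\beta p^{-s})^{-1}$ (holomorphic and nonvanishing at $s=k_\wp/2$) does not affect the order of vanishing at $s=k_\wp/2$. Thus the corollary simply rephrases Theorem \ref{main-thm} in terms of the specialization $f_\wp$ of $\hf$ rather than in terms of the underlying newform $f_\wp^\flat$ of tame level.
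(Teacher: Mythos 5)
Your proof is correct and follows exactly the paper's own argument: the paper's proof of Corollary \ref{main-coro} reads ``Immediate from Lemma \ref{L-functions-lemma} and Theorem \ref{main-thm}'', which is precisely the combination you use (with Remark \ref{old-rem} and Remark \ref{analytic-rank-rem} as helpful supporting context). There is nothing to add.
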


\begin{proof} Immediate from Lemma \ref{L-functions-lemma} and Theorem \ref{main-thm}. \end{proof}

Since $p$ is allowed to be any element of $\Xi_f$, which rules out only finitely many primes that are ordinary for $f$, Theorem \ref{main-thm} implies Theorem B and offers, to the best of our knowledge, the first evidence in higher weight for Conjecture \ref{greenberg-conj} when the minimal admissible generic rank of the Hida family is $1$ (or, equivalently, the root number of the Hida family is $-1$).

\begin{remark}
By \cite[Theorem 8]{Howard-derivatives}, a result analogous to Theorem \ref{main-thm} holds also for all but finitely many arithmetic primes of weight $2$.
\end{remark}

\begin{remark}
For a result towards a $p$-adic variant of Conjecture \ref{greenberg-conj} (\emph{cf.} Remark \ref{p-adic-variant-rem}), see \cite[Theorem 5.9]{CW}. Notice that the role of Assumption \ref{main-ass3} is played in \cite{CW} by the assumption that a certain cyclotomic $\mathcal R$-adic height pairing is non-degenerate.
\end{remark}

\section{Shafarevich--Tate groups in Hida families: the rank zero case} \label{sha-sec2}

The aim of this final section is to prove our results (Theorem \ref{sha-thm2}) on Shafarevich--Tate groups and on algebraic $\fP$-ranks for a large class of higher (even) weight modular forms in a Hida family $\hf$ in a rank $0$ setting. We remark that one could also obtain results of this kind using as a key ingredient the Mazur--Kitagawa two-variable $p$-adic $L$-function, which plays no role in our arguments (\emph{cf.} Remark \ref{mk-rem}).

\subsection{The newform $f$ of weight $2$ and choice of $p$} \label{choice-subsec2}

As before, let $f\in S_2(\Gamma_0(N))$ be a normalized newform of weight $2$ and square-free level $N$. In this last section, unlike what was done in \S \ref{choice-subsec}, we make the following

\begin{assumption} \label{main-0-ass}
$r_\an(f)=0$.
\end{assumption}

As a consequence, $\varepsilon(f)=1$. By \cite[p. 543, Theorem, (i)]{BFH} (see also \cite{MM-derivatives}), there is an imaginary quadratic field $K'$, with associated Dirichlet character $\chi_{K'}$, such that
\begin{itemize}
\item[(a)] the primes dividing $N$ split in $K'$;
\item[(b)] $r_\an(f\otimes\chi_{K'})=1$.
\end{itemize}
Fix such a field $K'$. As in \S \ref{heegner-subsec}, the theory of complex multiplication allows one to introduce a Heegner point $\alpha_{K'}\in A_f(K')$. We deduce from \eqref{base-change-L-eq} and condition (b) that $r_\an(f/K')=1$, which, by the Gross--Zagier formula, is equivalent to $\alpha_{K'}$ being non-torsion. Using $K'$ in place of $K$, we define a set $\Omega_f$ of prime numbers exactly as the set $\Xi_f$ from \S \ref{heegner-subsec}. In particular, $\Omega_f$ has density $1$ (\emph{cf.} Proposition \ref{density-prop}) and consists of all but finitely many primes that are ordinary for $f$ in the sense of \S \ref{choice-subsec} (\emph{cf.} Remark \ref{sigma-rem}). 

Pick $p\in\Omega_f$ and let $\hf\in\mathcal R'[\![q]\!]$ be the $p$-adic Hida family through $f$, where $\mathcal R'$ is a complete local noetherian domain that is finite and flat over the Iwasawa algebra $\cO_{\Q_f,\p}[\![\Gamma]\!]$. 

\subsection{Results on Shafarevich--Tate groups} \label{sha-subsec2}

Let $\wp$ be an arithmetic prime of $\mathcal R'$ of weight $k_\wp>2$ such that $k_\wp\equiv2\pmod{2(p-1)}$ and trivial character. As briefly reviewed in \S \ref{cycles-subsec}, there exists a systematic supply of Heegner cycles $y_{\wp,c}\in H^1\Big(K'_c,T^\dagger_{f_\wp^\flat}\Big)$, where $K'_c$ is the ring class field of $K'$ of conductor $c$. Let us define the $K'$-rational Heegner cycle
\[ y_{\wp,K'}\defeq\cores_{K'_1/K'}(y_{\wp,1})\in H^1\Big(K',T^\dagger_{f_\wp^\flat}\Big),  \]
which is the counterpart over $K'$ of the cycle $y_{\wp.K}$ from \eqref{y-K-eq}. In particular, one can prove the analogue of Theorem \ref{nekovar-thm} with $K'$ in place of $K$.

\begin{theorem} \label{non-torsion-prop2}
The Heegner cycle $y_{\wp,K'}$ is non-torsion over $\cO_{f_\wp^\flat,\fP}$.
\end{theorem}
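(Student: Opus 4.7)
The plan is to mirror, almost verbatim, the strategy used to prove Theorem \ref{non-torsion-prop}, with the imaginary quadratic field $K$ of \S \ref{heegner-subsec} replaced throughout by the field $K'$ chosen in \S \ref{choice-subsec2}. First I would observe that Assumption \ref{main-0-ass} combined with condition (b) on $K'$ forces $r_\an(f/K')=1$ via the factorization \eqref{base-change-L-eq}, so that the Gross--Zagier formula yields that the Heegner point $\alpha_{K'}\in A_f(K')$ is non-torsion over $\Z$. The arguments of Lemma \ref{nontorsion-lemma} and Proposition \ref{finite-number-prop} go through unchanged with $K'$ in place of $K$, showing both that $\alpha_{K'}$ is non-torsion as an element of the $\cO_f$-module $A_f(K')$ and that the set of primes $\p$ of $\cO_f$ satisfying $\alpha_{K'}\in\p A_f(K')$ is finite. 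Since by construction $\Omega_f$ was defined so as to exclude precisely these primes, for every $p\in\Omega_f$ the reduction $\bar\alpha_{K'}\in A_f(K')/\p A_f(K')$ is non-zero, and hence $\bar\delta_{f,K'}(\bar\alpha_{K'})\neq 0$ in $H^1(K',A_f[\p])$.

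Next I would invoke the Castella--Ota specialization formulae of Theorem \ref{c-o-thm}, which apply to any imaginary quadratic field satisfying the Heegner hypothesis relative to $N$ and hence equally well to $K'$. Writing $P_1\in H^1(K'_1,\T^\dagger)$ for the big Heegner point associated with $K'$ and setting $P_{K'}\defeq\cores_{K'_1/K'}(P_1)$, the compatibility of corestriction with the specialization maps yields $\sspp^\wp_{0,K'}(P_{K'})=e\cdot y_{\wp,K'}$ and $\xi_{f,K'}\bigl(\sspp^{\wp_2}_{0,K'}(P_{K'})\bigr)=d\cdot\delta_{f,K'}(\alpha_{K'})$ for some units $d,e\in\bar\Z_p^\times$. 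Applying the commutative diagram \eqref{commutative-eq} with $L=K'$ and chasing the reduction of $\sspp^\wp_{0,K'}(P_{K'})$ around it, exactly as in the proof of Theorem \ref{non-torsion-prop}, one finds that the image of $\pi_{\wp,K'}(y_{\wp,K'})$ under the composite isomorphism $\bar\xi_{f,K'}\circ\bsspp^{\wp_2}_{0,K'}\circ(\bsspp^\wp_{0,K'})^{-1}$ equals $\bar e^{-1}\bar d\cdot\bar\delta_{f,K'}(\bar\alpha_{K'})$ with $\bar e^{-1}\bar d\in\bar\F_p^\times$. Since this last element is non-zero, $y_{\wp,K'}$ does not vanish in $H^1\bigl(K',T^\dagger_{f_\wp^\flat}\otimes\bar\Z_p\bigr)$, and a fortiori not in $H^1\bigl(K',T^\dagger_{f_\wp^\flat}\bigr)$. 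Finally, invoking Corollary \ref{no-torsion-coro} (applicable because $K'/\Q$ is abelian, hence solvable) to conclude that this latter cohomology group is torsion-free over $\cO_{f_\wp^\flat,\fP}$ upgrades non-vanishing to non-torsionness.

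The main obstacle to expect is essentially cosmetic: every ingredient of the rank-one proof (Heegner hypothesis, non-torsionness of $\alpha_{K'}$ via Gross--Zagier, mod-$\p$ non-triviality via the definition of $\Omega_f$, the Castella--Ota specialization formulae, the commutative diagram \eqref{commutative-eq}, solvability of $K'/\Q$, and Corollary \ref{no-torsion-coro}) survives the substitution $K\rightsquigarrow K'$ with no modification. The substantive analytic input has already been done elsewhere: ensuring $r_\an(f/K')=1$ is encoded in the choice of $K'$ provided by Bump--Friedberg--Hoffstein in \S \ref{choice-subsec2}, and ensuring residual non-triviality over $K'$ is encoded in the construction of $\Omega_f$. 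What remains is purely a transcription exercise, and I would expect the write-up to be short enough to reduce essentially to the observation that the proof proceeds exactly as in Theorem \ref{non-torsion-prop} with $K$ replaced by $K'$, $\Xi_f$ by $\Omega_f$, and $\alpha_K$ by $\alpha_{K'}$.
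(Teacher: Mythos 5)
Your proposal is correct and matches the paper's approach exactly: the paper's own proof consists of the single line ``Proceed exactly as in the proof of Theorem \ref{non-torsion-prop},'' and your write-up is a faithful expansion of precisely that substitution $K\rightsquigarrow K'$, $\Xi_f\rightsquigarrow\Omega_f$, $\alpha_K\rightsquigarrow\alpha_{K'}$. All the ingredients you identify (the Heegner hypothesis for $K'$, non-torsionness of $\alpha_{K'}$ from Gross--Zagier, the definition of $\Omega_f$, the Castella--Ota specialization formulae, the commutative diagram \eqref{commutative-eq}, and Corollary \ref{no-torsion-coro}) are exactly the ones that carry the argument through.
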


\begin{proof} Proceed exactly as in the proof of Theorem \ref{non-torsion-prop}. \end{proof}

As in \S \ref{sha-subsec}, suppose that $A_f$ is an elliptic curve and denote it by $E_f$. In particular, since $N$ is square-free, $E_f$ is semistable. 

As a consequence of results of Gross--Zagier and Kolyvagin, Assumption \ref{main-0-ass} implies that $\rank_\Z E_f(\Q)=0$ (\emph{i.e.}, $E_f(\Q)$ is finite) and $\Sha_{p^\infty}(E_f/\Q)$ is finite (the semistability of $E_f$ plays no role here). In fact, the converse is also true: by results of Skinner and Urban on the Iwasawa Main Conjecture (\cite[Theorem 2, (b)]{SU}), the semistability of $E_f$ and the fact that $p\geq11$ ensure that Assumption \ref{main-0-ass} is equivalent to

\begin{assumption} \label{main-0-ass2}
$\rank_\Z E_f(\Q)=0$ and $\#\Sha_{p^\infty}(E_f/\Q)<\infty$.
\end{assumption} 

Let $\wp$ be an arithmetic prime of $\mathcal R'$ of weight $k_\wp>2$ such that $k_\wp\equiv2\pmod{2(p-1)}$ and trivial character. Set $g\defeq f_\wp^\flat$. The next result is the rank $0$ counterpart of Theorem \ref{sha-thm}.

\begin{theorem} \label{sha-thm2}
$r_{\alg,\fP}(g/\Q)=0$ and $\#\Sha_\fP(g/\Q)<\infty$.
\end{theorem}

\begin{proof}  Let us prove the rank part first.  Denote by $\tilde\tau\in\Gal(K'_1/\Q)$ the restriction to $K'_1$ of complex conjugation in $G_\Q$, so that $\tilde\tau$ is a lift of a generator $\tau$ of $\Gal(K'/\Q)$. By \cite[Proposition 6.2]{Nek}, there is an equality
\begin{equation} \label{tau-eq2}
\tilde\tau(y_{\wp,1})=-\varepsilon(g)\cdot\sigma(y_{\wp,1})
\end{equation}
for a suitable $\sigma\in\Gal(K'_1/K')$. On the other hand, $\varepsilon(g)=\varepsilon(\hf)=\varepsilon(f)=1$, hence \eqref{tau-eq2} gives
\begin{equation} \label{tau-sigma-eq}
\tilde\tau(y_{\wp,1})=-\sigma(y_{\wp,1}) 
\end{equation}
for some $\sigma\in\Gal(K'_1/K')$. Using \eqref{tau-sigma-eq} and proceeding as in \eqref{tau-cores-eq}, we see that
\begin{equation} \label{tau-K'-eq}
\tau(y_{\wp,K'})=-y_{\wp,K'}.
\end{equation}
Arguing by contradiction, now suppose that there exists $y_\wp\in\Lambda_{g,\fP}(\Q)$ that is non-torsion over $\cO_{\Q_g,\fP}$: we show that $y_\wp$ and $y_{\wp,K'}$ are linearly independent over $\cO_{\Q_g,\fP}$ (here recall that $\Lambda_{g,\fP}(\Q)\subset\Lambda_{g,\fP}(K)$ by Proposition \ref{lambda-prop1}). If this is not the case, then there are $a,b\in\cO_{\Q_g,\fP}$ such that $a\not=0\not=b$ and
\[ ay_{\wp,K'}=by_\wp\in\Lambda_{g,\fP}(\Q). \]
It follows from \eqref{tau-K'-eq} that
\[ ay_{\wp,K'}=\tau(ay_{\wp,K'})=-ay_{\wp,K'}, \]
which shows that $2ay_{\wp,K'}=0$. This contradicts the fact that, by Theorem \ref{non-torsion-prop2}, $y_{\wp,K'}$ is non-torsion over $\cO_{\Q_g,\fP}$. Therefore the $\cO_{\Q_g,\fP}$-module generated by $y_\wp$ and $y_{\wp,K'}$ has rank $2$, which is impossible because $r_{\alg,\fP}(g/K')=1$ by part (1) of Theorem \ref{nekovar-thm}. We conclude that $r_{\alg,\fP}(g/\Q)=0$, as claimed. 

Finally, since $y_{\wp,K'}$ is non-torsion, $\Sha_\fP(g/K')$ is finite by part (2) of Theorem \ref{nekovar-thm} and Remark \ref{sha-rem}, and then $\Sha_\fP(g/\Q)$ is finite by Proposition \ref{sha-prop}. \end{proof}

Similarly to the rank $1$ case, the only condition that we imposed on $p$ is that it belong to $\Omega_f$, whose complement in the set of primes that are ordinary for $f$ is finite. Therefore, Theorem \ref{sha-thm2} implies the rank $0$ part of Theorem A.

\begin{corollary} \label{sha-coro2}
$r_{\alg,\fP}(g/\Q)=r_{\min}\bigl(\hf\bigr)$.
\end{corollary}

\begin{proof} Since $\varepsilon(\hf)=1$, we have $r_{\min}(\hf)=0$, and Theorem \ref{sha-thm2} immediately implies the corollary. \end{proof}

As remarked in the introduction, this result is consistent with (and offers some evidence for) Conjecture \ref{main-conj}.

\begin{remark} \label{mk-rem}
As we pointed out earlier, results in the vein of Theorem \ref{sha-thm2} could also be obtained using the Mazur--Kitagawa two-variable $p$-adic $L$-function (\cite{kitagawa}; \emph{cf.} also \cite[\S 3.4]{EPW}) as a crucial ingredient. Roughly speaking, in this case the arguments would go as follows. By \cite[Theorem 7]{Howard-derivatives}, whose proof exploits properties of the above-mentioned $p$-adic $L$-function, Assumption \ref{main-0-ass2} (which is equivalent to Assumption \ref{main-0-ass}) implies that $r_\an(f_\wp^\flat)=0$ for all but finitely many arithmetic primes $\wp$ of weight larger than $2$ and trivial character. In order to deduce from this analytic fact that $r_{\alg,\fP}(f_\wp^\flat/\Q)=0$ and $\#\Sha_\fP(f_\wp^\flat/\Q)<\infty$ for all such $\wp$, one would then use Kolyvagin-type arguments similar to those described in \S \ref{choice-subsec2} and \S \ref{sha-subsec2}. In light of what we have just said, from our point of view our strategy to prove Theorem \ref{sha-thm2} is especially interesting because it bypasses any consideration whatsoever involving $p$-adic $L$-functions.
\end{remark}

\appendix

\section{Dihedral residual representations} \label{appendix}

As we mentioned in Remark \ref{c-o-rem}, Castella proved Theorem \ref{c-o-thm} in \cite{CasHeeg} under the extra assumption that the restriction ${\bar\rho_{f,\p}|}_{G_K}$ is irreducible. This is a potentially delicate issue, as we want to have the freedom to choose the imaginary quadratic field $K$ according to the prescription in \S \ref{heegner-subsec}. Although we ultimately need not bother about this irreducibility condition, as Ota removed it in \cite{Ota-JNT}, in this short appendix we elaborate on it and show that it is guaranteed by a natural group-theoretic property, namely, that $\bar\rho_{f,\p}$ be not dihedral. This is a well-known result in representation theory, but for lack of a convenient reference we decided to include some details. 

\begin{proposition} \label{induced-prop}
Let $G$ be a group, let $\rho:G\rightarrow\GL_2(L)$ be a degree $2$ representation of $G$ over a field $L$ and let $H$ be a normal subgroup of $G$ of index $2$. If $\rho$ is irreducible and ${\rho|}_H$ is reducible, then $\rho\simeq\Ind^G_H(\varphi)$ for a suitable degree $1$ representation $\varphi$ of $H$ over $L$.
\end{proposition}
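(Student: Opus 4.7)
Since $\rho|_H$ is reducible, the $2$-dimensional representation space $V=L^2$ contains a non-zero $H$-stable subspace of dimension at most $1$; as $\rho|_H$ has dimension $2$, such a subspace has dimension exactly $1$. Pick one of these lines $W\subset V$ and let $\varphi:H\to L^\times$ be the character describing the action of $H$ on $W$, so that $\rho|_H$ restricted to $W$ is $\varphi$.

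The first step of the plan is to produce a second $H$-stable line from $W$ by transport. Fix an element $g\in G\smallsetminus H$, so that $G=H\sqcup gH$ since $[G:H]=2$. The subspace $gW\subset V$ is $H$-stable: for every $h\in H$ one has $h\cdot(gW)=g(g^{-1}hg)W=gW$ because $g^{-1}hg\in H$ by the normality of $H$, and $W$ is $H$-stable by construction. Moreover, $gW\neq W$: if equality held, then $W$ would be stable under both $H$ and $g$, hence under the whole group $G$, contradicting the irreducibility of $\rho$. Consequently, $W$ and $gW$ are distinct lines in $V$, so $V=W\oplus gW$ as $L$-vector spaces.

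The second step is to identify $\rho$ with $\Ind^G_H(\varphi)$. A direct computation on $gW$ shows that, for every $h\in H$ and every $w\in W$, the equality $h\cdot(gw)=g\cdot\varphi(g^{-1}hg)w$ holds, so $H$ acts on $gW$ through the conjugate character $\varphi^g(h)\defeq\varphi(g^{-1}hg)$. Therefore $\rho|_H\simeq\varphi\oplus\varphi^g$, which matches the restriction to $H$ of $\Ind^G_H(\varphi)$ by the Mackey-type decomposition for index $2$ subgroups. The natural inclusion $W\longmono V$ is an $H$-equivariant map from the line on which $H$ acts via $\varphi$ into $\rho|_H$, and Frobenius reciprocity turns it into a non-zero element of $\Hom_G\bigl(\Ind^G_H(\varphi),\rho\bigr)$. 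Since $\rho$ is irreducible, this $G$-equivariant map is surjective, and since both source and target have dimension $2$ over $L$, it is an isomorphism.

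The routine parts are the bookkeeping with the conjugate character $\varphi^g$ and the Frobenius reciprocity identification; no genuine obstacle is expected, the only mild subtlety being the need to invoke the normality of $H$ exactly at the point where one checks that $gW$ is $H$-stable. If one prefers to avoid Frobenius reciprocity, the final isomorphism can be exhibited explicitly by choosing a basis $\{w,gw\}$ of $V$ adapted to the decomposition $V=W\oplus gW$ and writing $\rho(h)$ and $\rho(g)$ as matrices that coincide with the standard matrices of $\Ind^G_H(\varphi)$ in the analogous basis.
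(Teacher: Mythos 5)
Your proof is correct, and it takes a genuinely different route from the paper's. The paper invokes the semisimplicity of the restriction to a normal subgroup (Clifford's theorem, cited as an exercise in Kowalski) as a black box to get the splitting $\rho|_H = \varphi \oplus \psi$ immediately, then applies Frobenius reciprocity in the form $\Hom_G\bigl(\rho,\Ind^G_H(\varphi)\bigr)\simeq\Hom_H\bigl({\rho|}_H,\varphi\bigr)$ and counts dimensions. You instead \emph{construct} the second $H$-stable line as the translate $gW$, show $gW\neq W$ from the irreducibility of $\rho$, and hence prove the semisimplicity of ${\rho|}_H$ by hand rather than citing it; you then identify the $H$-action on $gW$ with the conjugate character $\varphi^g$, recognize $\varphi\oplus\varphi^g$ as the restriction of the induced representation, and close the argument with the other adjunction $\Hom_G\bigl(\Ind^G_H(\varphi),\rho\bigr)\simeq\Hom_H\bigl(\varphi,{\rho|}_H\bigr)$ (or the explicit basis $\{w,gw\}$). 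Your version is more elementary and self-contained, and it makes the dihedral structure of $\rho$ visible; the paper's version is shorter once Clifford semisimplicity is taken for granted. Both are valid in the generality needed here (finite index, arbitrary field), and the two Frobenius-reciprocity directions you and the paper use are interchangeable for an index-$2$ subgroup.
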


\begin{proof} By virtue of the semisimplicity of restriction (see \cite[Exercise 2.3.4]{kowalski}), ${\rho|}_H$ is semisimple as a representation of $H$. Since ${\rho|}_H$ is reducible, it follows that
\begin{equation} \label{res-rep-eq}
{\rho|}_H=\varphi\oplus\psi 
\end{equation}
for suitable degree $1$ subrepresentations $\varphi$ and $\psi$ of $H$. By the adjointness of induction and restriction (an incarnation of Frobenius reciprocity, see \cite[Proposition 2.3.9]{kowalski}), there is a natural isomorphism of $L$-vector spaces
\begin{equation} \label{frobenius-eq}
\Hom_G\bigl(\rho,\Ind^G_H(\varphi)\bigr)\simeq\Hom_H\bigl({\rho|}_H,\varphi\bigr), 
\end{equation}
where $\Hom_\star(\cdot,\cdot)$ with $\star\in\{G,H\}$ denotes the $L$-vector space of morphisms between two representations of $\star$. The splitting in \eqref{res-rep-eq} shows that the right-hand side of \eqref{frobenius-eq} is not trivial, hence $\Hom_G\bigl(\rho,\Ind^G_H(\varphi)\bigr)\neq0$. Since $\rho$ is irreducible, this means that there is an injective morphism of representations of $G$ from $\rho$ to $\Ind^G_H(\varphi)$. On the other hand, by \cite[Proposition 2.3.11]{kowalski} one has
\[ \dim\bigl(\Ind^G_H(\varphi)\bigr)=[G:H]\dim(\varphi)=2=\dim(\rho), \] 
which implies that $\rho$ and $\Ind^G_H(\varphi)$ are isomorphic. \end{proof}

Let $\F$ be a finite field and let $\bar\F$ be an algebraic closure of $\F$.

\begin{definition} \label{dihedral-def}
A (continuous) irreducible representation $\rho:G_\Q\rightarrow\GL_2(\bar\F)$ is \emph{dihedral} if there is a quadratic field $K\subset\bar\Q$ such that $\rho$ is induced from a character of $G_K$.  
\end{definition}

Equivalently, a representation as in Definition \ref{dihedral-def} is dihedral if its projective image is isomorphic to the dihedral group $D_n$ for some $n\geq3$. 

\begin{corollary} \label{appendix-coro}
Let $\rho:G_\Q\rightarrow\GL_2(\bar\F)$ be an irreducible representation. If $\rho$ is not dihedral, then ${\rho|}_{G_K}$ is irreducible for every quadratic field $K$.
\end{corollary}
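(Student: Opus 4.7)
The plan is to prove the contrapositive: assume that there exists a quadratic field $K \subset \bar\Q$ such that ${\rho|}_{G_K}$ is reducible, and show that $\rho$ must be dihedral in the sense of Definition \ref{dihedral-def}. Once this is established, the corollary follows immediately.

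First I would set $G \defeq G_\Q$ and $H \defeq G_K$. Since $K/\Q$ is quadratic, $H$ is a normal subgroup of $G$ of index $2$. By assumption, $\rho$ is irreducible as a representation of $G$, while ${\rho|}_H$ is reducible. These are exactly the hypotheses of Proposition \ref{induced-prop}, which has already been proved in the excerpt. Applying that proposition directly yields a character $\varphi : G_K \to \bar\F^\times$ such that
\[ \rho \simeq \Ind_{G_K}^{G_\Q}(\varphi) \]
as representations over $\bar\F$.

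By Definition \ref{dihedral-def}, this exhibits $\rho$ as a dihedral representation, induced from a character of $G_K$ for the quadratic field $K$. This contradicts the hypothesis that $\rho$ is not dihedral, completing the proof of the contrapositive and hence of the corollary. There is no real obstacle here, since all of the work has already been done in Proposition \ref{induced-prop}; the only thing to verify is that the hypotheses of that proposition are met, and this is immediate from the fact that $\Gal(\bar\Q/K)$ sits inside $\Gal(\bar\Q/\Q)$ as a normal subgroup of index $2$ for any quadratic extension $K/\Q$.
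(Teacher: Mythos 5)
Your proof is correct and is precisely the argument the paper has in mind: the paper's own proof reads simply ``Immediate from Proposition \ref{induced-prop},'' which is the contrapositive you spell out. Verifying that $G_K$ is normal of index $2$ in $G_\Q$ for a quadratic field $K$ is indeed the only point to check before applying the proposition, and you do so correctly.
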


\begin{proof} Immediate from Proposition \ref{induced-prop}. \end{proof}

In light of Corollary \ref{appendix-coro}, if we chose $f$ so that $\bar\rho_{f,\p}$ is not dihedral, then in \S \ref{big-subsec} we could directly use \cite[Theorem 6.5]{CasHeeg}, up to the minor corrections to \cite{CasHeeg} that are pointed out in \cite[Remark 1.3]{Ota-JNT}.

\bibliographystyle{amsplain}
\bibliography{Rank}

\end{document}